\documentclass{amsart}
\usepackage{amsmath,amssymb,amsthm,tikz,hyperref,mathtools}
\usepackage[abbrev]{amsrefs}

\newcommand{\abs}[1]{\left\vert {#1}\right\vert}
\newcommand{\norm}[1]{\left\Vert {#1}\right\Vert}
\newcommand{\avg}[1]{\left\langle {#1}\right\rangle}
\newcommand{\ep}{\epsilon}

\newtheorem{theorem}{Theorem}[section]
\newtheorem{lemma}[theorem]{Lemma}
\newtheorem{corollary}[theorem]{Corollary}
\newtheorem{proposition}[theorem]{Proposition}
\newtheorem{problem}[theorem]{Problem}
\newtheorem{example}[theorem]{Example}
\newtheorem{remark}[theorem]{Remark}
\theoremstyle{definition}
\newtheorem{definition}[theorem]{Definition}

\numberwithin{equation}{section}

\newcommand{\Piv}{\Pi^v}
\newcommand{\Mv}{M^v}

\DeclareMathOperator{\BHO}{BHO}
\title{Weak-type estimates for the Bergman projection on planar domains}
\author[A. W. Green]{A. Walton Green}
\address{A. Walton Green, Department of Mathematics, Illinois State University, Normal, IL 61761}
\email{awgree1@ilstu.edu}
\author[C. B. Stockdale]{Cody B. Stockdale}
\address{Cody B. Stockdale, School of Mathematical and Statistical Sciences, Clemson University, Clemson, SC 29634, USA}
\email{cbstock@clemson.edu}
\author[B. Sweeting]{Brandon Sweeting}
\address{Brandon Sweeting, Department of Mathematics, Washington University in St. Louis, St. Louis, MO 63130, USA}
\email{sweeting@wustl.edu}
\author[N. A. Wagner]{Nathan A. Wagner}
\thanks{N. A. W. is supported by the National Science Foundation, DMS Grant No. 2549719}
\address{Nathan A. Wagner, Department of Mathematical Sciences, George Mason University, Fairfax, VA 22030, USA}
\email{nwagner8@gmu.edu}
\begin{document}

\keywords{Bergman projection, weak-type estimates, Bekoll\'e-Bonami, mixed-weighted weak-type estimates}

\subjclass[2020]{30H25, 42B20}

\begin{abstract}
    We investigate the relationship between the weak-type regularity of the Bergman projection, $\Pi_{\Omega}$, of a simply connected domain $\Omega \subset \mathbb{C}$ and the boundary geometry of $\Omega$ in terms of a conformal map $\psi\colon\mathbb{D}\rightarrow\Omega$. We show that $\Pi_{\Omega}$ is of weak-type $(1,1)$ whenever $|\psi'|$ is in the Bekoll\'e-Bonami class $B_1$, give a more general necessary condition for the weak-type $(p,p)$ bounds of $\Pi_{\Omega}$ when $1\leq p<\infty$, and establish sharpened sufficient conditions for the weak-type bounds when $p>1$. Our results follow from a reformulation in terms of mixed-weighted weak-type inequalities for $\Pi_{\mathbb{D}}$. We provide several applications.
\end{abstract}

\maketitle

\section{Introduction}
Given a domain $\Omega \subset \mathbb C$, the Bergman space $A^2(\Omega)$ is the closed subspace of $L^2(\Omega)$ consisting of analytic functions. As a reproducing kernel Hilbert space, each $A^2(\Omega)$ admits a Bergman kernel $K_\Omega \colon \Omega \times \Omega \to \mathbb C$ such that integration against this kernel forms the Bergman projection, namely the orthogonal projection $\Pi_\Omega \colon L^2(\Omega) \to A^2(\Omega)$. More precisely, for $f \in L^2(\Omega)$ and $z \in \Omega$, it holds that 
\begin{equation*}
\Pi_{\Omega}f(z)= \int_{\Omega} K_{\Omega}(z,w) f(w) \, dA(w), \label{BergmanL2Rep}
\end{equation*}  
where $A$ denotes the Lebesgue area measure on $\mathbb C$.
In the special case of the unit disk, we have the formula 
$$
   K(z,w) := K_{\mathbb{D}}(z,w) = \frac{1}{\pi (1-z\overline{w})^2}. 
$$
In this paper, we consider simply connected domains $\Omega$, in which case $\Pi_\Omega$ and $\Pi:=\Pi_{\mathbb D}$ can be related through a conformal map $\psi\colon\mathbb D \to \Omega$. Using this connection, we investigate the mapping properties of $\Pi_\Omega$ in terms of the regularity of $\psi$. 
This program was initiated by 
D. Bekoll\'e and A. Bonami in \cites{Bek82,Bek86,BB78} to study the boundedness of $\Pi_\Omega$ on $L^p(\Omega)$ for $1<p<\infty$. 
They observed that such an estimate is equivalent to a  weighted $L^p$ bound for the Bergman projection on the unit disk, which can be characterized using an adaptation of Calder\'on-Zygmund theory for Muckenhoupt weights. More precisely, with $v=|\psi'|$, $\Pi_\Omega$ is bounded on $L^p(\Omega)$ if and only if $v^{2-p}$ satisfies the $B_p$ condition
        \begin{equation}\label{e:Bp-intro}  \sup_Q \frac{\norm{v^{-1}}_{L^p(Q,v^2)} \norm{v^{-1}}_{L^{p'}(Q,v^2)} }{|Q|} < \infty,\end{equation}
where $\frac{1}{p}+\frac{1}{p'}=1$ and the supremum is taken over all Carleson boxes $Q$,
which are subsets of $\mathbb{D}$ associated with a point $e^{2 \pi i\theta} \in \mathbb T$ and length $\ell \in (0,1]$ having the form
    \[ Q=Q(\theta,\ell) = \left\{ re^{2 \pi i\phi} \colon 1-\ell \le r <1 , \, |\theta-\phi| \le \tfrac{\ell}{2} \right\}.\]
The $B_p$ property is a variant of the classical Muckenhoupt $A_p$ condition, the latter of which characterizes the weighted $L^p(\mathbb{R})$-boundedness of the Hilbert transform and Hardy-Littlewood maximal function; see \cites{CMP2011,Duo2001,GR1985,Gra14} for more on the real-variable theory of $A_p$ weights.

Our work starts by extending this line of research to the endpoint case $p=1$.
It is commonly understood that $\Pi_\Omega$  
is typically unbounded from $L^1(\Omega)$ to itself; see \cite{Dall22} for precise formulations of this fact. However, regularity may be salvaged when the target space is replaced with the larger Lorentz space $L^{1,\infty}(\Omega)$. 
Generally, for $1 \le p < \infty$, we say that an operator $T$ is of weak-type $(p,p)$ if it maps $L^p(\Omega)$ to $L^{p,\infty}(\Omega)$ boundedly, that is 
$$
    \|Tf\|_{L^{p,\infty}(\Omega)} \coloneq \sup_{\lambda>0}\lambda |\{z \in \Omega\colon |Tf(z)|>\lambda\}|^{1/p} \lesssim \|f\|_{L^p(\Omega)}
$$
for all $f \in L^p(\Omega)$. Note that if $T$ is bounded on $L^p(\Omega)$, then $T$ is of weak-type $(p,p)$ by Chebyshev's inequality; for this reason, it is natural to refer to the $L^p(\Omega)$ bound as the strong-type $(p,p)$ property of $T$. See \cites{Bek82, Bek86, DHZ2001, McN94, SW23, CNSW25} for more on the weak-type regularity of the Bergman projection. 

Since the condition $|\psi'|^{2-p} \in B_p$ from \eqref{e:Bp-intro} characterizes the strong-type $(p,p)$ bounds of $\Pi_\Omega$, it is also sufficient for $\Pi_\Omega$ to be of weak-type $(p,p)$ for $1<p<\infty$. Taking the limit of this condition 
as $p \searrow 1$, 
it is natural to conjecture that $\Pi_\Omega$ is of weak-type $(1,1)$ if $v=|\psi'|$ belongs to the class $B_1$, which means 
    \begin{equation*}\label{e:B1} 
    \sup_Q\frac{\norm{v}_{L^1(Q)} \norm{v^{-1}}_{L^\infty(Q)}}{|Q|} < \infty.\end{equation*}
However, this has remained elusive until now, mainly because the change of variables in $L^{p,\infty}$ is more delicate than in a typical $L^p$ space -- the weight $v=|\psi'|$ appears both as a multiplier on the operator, as well as in the underlying measure on $\mathbb D$. Such weighted inequalities are referred to as mixed-weighted weak-type due to the mixed contribution of the weight. 
In particular, treating the multiplier portion of the weight $v$ causes additional complications absent in the strong-type case, where the weight may be absorbed entirely by the measure. We provide more discussion on mixed-weighted weak-type estimates in Subsection \ref{Subsection:MixedWeighted}. \looseness=-1

Using this perspective, Bekoll\'e showed that $|\psi'|^2 \in B_1$ is sufficient for $\Pi_{\Omega}$ to be of weak-type $(1,1)$ in \cite{Bek86}. Our first main result weakens this hypothesis to $|\psi'| \in B_1$. 
\begin{theorem}\label{thm:B1}
Let $\Omega \subset \mathbb C$ be a simply connected domain
and $\psi\colon \mathbb D \to \Omega$ be a conformal map. If $|\psi'| \in B_1$, then $\Pi_\Omega$ is of weak-type $(1,1)$. 
\end{theorem}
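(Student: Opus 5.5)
We begin by transferring the problem to the disk. Write $v=|\psi'|$ and use the transformation rule $K_\Omega(\psi(z),\psi(w))\,\psi'(z)\overline{\psi'(w)} = K(z,w)$. For $f\in L^1(\Omega)$ set $g=(f\circ\psi)\,\psi'$. Then
\[ \Pi_\Omega f(\psi(z)) = \psi'(z)^{-1}\,\Pi\big(g\big)(z), \qquad \|g\|_{L^1(\mathbb D, v)} = \|f\|_{L^1(\Omega)}, \]
since $\|f\|_{L^1(\Omega)}=\int_{\mathbb D}|f\circ\psi|\,v^2\,dA$ and $|g|=|f\circ\psi|\,v$. Changing variables in the distribution function gives $|\{|\Pi_\Omega f|>\lambda\}| = v^2(\{z\in\mathbb D: |\Pi g(z)|/v(z)>\lambda\})$. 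Theorem \ref{thm:B1} is therefore equivalent to the mixed-weighted weak-type estimate
\[ \lambda\, v^2\big(\{ z : |\Pi g(z)| > \lambda v(z)\}\big) \lesssim \int_{\mathbb D} |g|\, v\, dA \quad\text{for } v\in B_1. \]
This is the Bergman analogue of the Sawyer / Cruz-Uribe--Martell--P\'erez inequality $\|T(fv)/v\|_{L^{1,\infty}(uv)}\lesssim\|f\|_{L^1(uv)}$ with $u=v$, where $v$ is an $A_1$-type weight.

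Next we dominate $\Pi$ pointwise by a positive dyadic operator. Using the standard dyadic systems of Carleson boxes (finitely many shifted trees), $|\Pi g|\lesssim \sum_{j} \mathcal A_j |g|$, where
\[ \mathcal A h=\sum_{Q\in\mathcal D}\frac{\int_{Q}h}{|Q|}\,\mathbf 1_{Q^{\rm top}} \]
and $Q^{\rm top}$ is the upper half of $Q$. It suffices to prove the estimate for a single such sparse-like operator $\mathcal A$ with $g\ge0$. The $B_1$ condition says $\frac{v(Q)}{|Q|}\lesssim \inf_Q v$. In particular $v$ is essentially constant on each top half: on $Q^{\rm top}$ one has $\sup v \lesssim \inf v$, since $Q^{\rm top}$ is comparable to a box whose average controls $\sup_{Q^{\rm top}} v$ by subharmonicity of $|\psi'|$.

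The core step is a Calder\'on--Zygmund decomposition adapted to the measure $v\,dA$, following Sawyer's argument. Fix $\lambda$. Let $\{Q_k\}$ be the maximal dyadic boxes with $\frac{1}{v(Q)}\int_Q g v>\lambda$. Their union $E$ satisfies
\[ v^2(E)\le \sum_k \big(\sup_{Q_k}v\big)\, v(Q_k). \]
This sup is not directly controlled by $B_1$, and handling it is the main obstacle. I would try instead to stop on the quantity $\frac{1}{|Q|}\int_Q g$ compared to $\lambda\inf_Q v$. One must then estimate $v^2$ of the union of the selected boxes using $v^2(Q)\le \|v\|_{L^\infty(Q^{\rm top})}\,v(Q)$-type bounds. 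These should follow because $|\psi'|$ is comparable to $|\psi'(z_Q)|$ on the top half, and $B_1$ forces $v(Q)\lesssim |Q|\,v(z_Q)$. That gives $v^2(Q)\lesssim v(z_Q)^2|Q|$ after a Carleson-type summation using the bounded overlap of top halves.

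Off $E$, we split $\mathcal A g = \sum_{Q\not\subset E}+\sum_{Q\subset E}$. Boxes contained in a selected box contribute only on $E$. For boxes not contained in $E$, $\frac{1}{|Q|}\int_Q g\le \lambda \inf_Q v\le\lambda v$ on $Q^{\rm top}$. Hence $\mathcal A g/v$ is controlled there by $\mathcal A$ applied to the "good part" $b$ obtained by replacing $g$ on each $Q_k$ by its average. Chebyshev in $L^2(v^2)$, or in $L^1(v^2)$ with the weight moved inside, then reduces matters to an estimate of the form
\[ \int \frac{\mathcal A b}{v}\,v^2\,dA=\int b\,\mathcal A^*(v)\,dA \lesssim \int b\, v. \]
This uses that $\mathcal A^* v(w)=\sum_{Q\ni w} \frac{v(Q^{\rm top})}{|Q|}\lesssim \sum_{Q\ni w}\frac{|Q^{\rm top}|}{|Q|}\inf_Q v$. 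This sum is only logarithmically summable, so instead I would use $B_1$ in the form $\frac{v(Q^{\rm top})}{|Q|}$ decaying geometrically along the chain of boxes containing $w$, which follows from $v(Q)/|Q|\lesssim \inf_Q v\le v(w)$ combined with a reverse-H\"older/doubling property of $B_1$ weights to gain a factor $2^{-\epsilon k}$. I expect justifying this geometric gain, and the control of $v^2(E)$ above, to be the delicate points.
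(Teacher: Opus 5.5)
Your change of variables is correct (it is Proposition \ref{p:change}), but what follows has genuine gaps, and both points you flag as ``delicate'' are in fact false.

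\textbf{The domination step fails.} The claimed bound $|\Pi g|\lesssim\sum_j\mathcal A_j|g|$ is false. The top halves of a dyadic grid partition $\mathbb D$, so $\mathcal A h(z)=\avg{h}_{Q_z}$ for the single box $Q_z$ with $z\in Q_z^{\mathrm{top}}$. Hence $\mathcal A h\le Mh$. But $\Pi$ is not pointwise controlled by $M$: it is unbounded on $L^\infty$, and concretely $\Pi\mathbf 1_{D(w_0,r)}(z)=r^2(1-z\overline{w_0})^{-2}$ has size $\sim r^2$ at points $z$ near $\mathbb T$ far from $w_0$, where every $\mathcal A_j\mathbf 1_{D(w_0,r)}(z)$ vanishes.

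The correct bound \eqref{e:sparse} has $\mathbf 1_Q$, i.e.\ a sum of $\avg{|g|}_Q$ over \emph{all} boxes $Q\ni z$. With that operator your ``off $E$'' step collapses, because unboundedly many boxes $Q\ni z$ with $Q\not\subset E$ each contribute up to $\lambda\inf_Q v$.

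The closing $L^1$ estimate $\int b\,\mathcal A^*v\lesssim\int bv$ is false already for $v\equiv1\in B_1$. Here $|Q^{\mathrm{top}}|/|Q|\ge\frac14$ for every $Q$, so there is no geometric decay, and $\mathcal A^*1(w)\gtrsim\log\frac{1}{1-|w|}$. This is the same phenomenon as the unboundedness of $\Pi^+$ on $L^1$, and no reverse H\"older inequality can cure it.

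What is missing is a genuine mechanism for passing from $\Pi$ to $M$ in the mixed weak-type setting. The paper does this in Theorem \ref{thm:Pi-M1}:
\begin{enumerate}
\item it writes $\|(\Pi^+)^u f\|_{L^{1,\infty}(u^2)}^{1/r}$ via $L^{r,\infty}$--$L^{s,1}$ duality;
\item it dominates the dual function $h$ by the Rubio de Francia iterate $Rh$ of $M^u$ on $L^{s,1}(u^2)$;
\item it uses BHO and the reverse H\"older inequality (Lemma \ref{l:RH}) to get $(Rh\cdot u)u^{1/s}\in B_1\subset B_\infty$ for $s$ large;
\item it applies the Coifman--Fefferman inequality (Proposition \ref{p:CF}) with exponent $1/r<1$.
\end{enumerate}

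\textbf{The control of $v^2$ on stopping boxes fails.} The bound $v^2(Q)\lesssim v(z_Q)^2|Q|$ is false for $B_1$ weights. For $\psi(z)=\log(1-z)$ one has $v=|1-z|^{-1}\in B_1$. Yet $v^2(Q)=\infty$ for every box whose base arc contains $1$ in its interior, while $v(z_Q)^2|Q|\sim1$. The $B_1$ and BHO conditions control $v$ on top halves, not $\sup_Q v$, which is exactly why $v^2$ need not be locally integrable or doubling.

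With your stopping rule this is not a technicality. Take this $v$, $g=\mathbf 1_{D(1-\epsilon,\epsilon/2)}$, $\lambda=\epsilon^{3/2}$ and $\epsilon$ small. Then some dyadic box whose base arc is centered at $1$ lies in $E$, so $v^2(E)=\infty$ although $\int gv\lesssim\epsilon$. One therefore cannot estimate $v^2$ of whole stopping boxes at all.

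The paper's Theorem \ref{thm:max-1} proceeds instead as follows:
\begin{enumerate}
\item it normalizes the goal to $u^2(\{Mf>u\})\lesssim\|f\|_{L^1(u)}$;
\item it splits this set into the pieces $Q_{k,j}\cap\{a^k<u\le a^{k+1}\}$, where $Q_{k,j}$ are the level-$a^k$ stopping cubes of $Mf$, so that $u^2\le a^{2k+2}$ on each piece;
\item it bounds the Lebesgue measure of each piece by the distribution estimate of Lemma \ref{lem:b1};
\item it sums using principal cubes and the unweighted weak-$(1,1)$ bound for $M$.
\end{enumerate}
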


Boundedly extending $\Pi_{\Omega}$ between function spaces is useful for determining functional analytic properties of Bergman spaces and relevant to planar PDEs and geometric function theory; see \cite{Hed02}. We apply Theorem \ref{thm:B1} to develop properties of the Bergman-type spaces 
$A^1(\Omega)$ and $A^{1,\infty}(\Omega)$, 
which respectively are the closed subspaces of $L^1(\Omega)$ and $L^{1,\infty}(\Omega)$ 
consisting of analytic functions. Recall that by construction, $\Pi_\Omega$ reproduces functions in $A^2(\Omega)$ and orthogonally projects $L^2(\Omega)$ onto $A^2(\Omega)$. Our main corollary suitably extends these properties to $A^1(\Omega)$ and $A^{1,\infty}(\Omega)$. \looseness=-1 
\begin{corollary}\label{c:A1}
Let $\Omega \subset \mathbb C$ be a simply connected domain and $\psi\colon \mathbb D \to \Omega$ be a conformal map. If $|\psi'| \in B_1$, then
 	\begin{itemize}
 	\item[I.] $\Pi_\Omega f = f$ for all $f \in A^1(\Omega)$, and 
 	\item[II.] $\Pi_\Omega \colon L^1(\Omega)\to A^{1,\infty}(\Omega)$.
 	\end{itemize}
\end{corollary}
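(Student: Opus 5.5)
Both parts will follow from Theorem \ref{thm:B1} combined with a density argument, and the key step is to make sense of $\Pi_\Omega f$ on $L^1(\Omega)$. Since $\Pi_\Omega$ is of weak-type $(1,1)$ and agrees with the integral operator on $L^1(\Omega)\cap L^2(\Omega)$, which is dense in $L^1(\Omega)$, it extends uniquely to a bounded operator $L^1(\Omega)\to L^{1,\infty}(\Omega)$. The extension is obtained by Cauchy sequences in measure: $L^{1,\infty}$ is complete with a quasinorm controlling convergence in measure. I will also check that this extension coincides with the pointwise integral $\int_\Omega K_\Omega(z,w)f(w)\,dA(w)$. Note that $K_\Omega(z,\cdot)$ is bounded on each set $\{w\}$ only locally. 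However, $K_\Omega(z,w)=\psi'^{-1}$-transformed disk kernel, namely $K_\Omega(\psi(\zeta),\psi(\eta))=K(\zeta,\eta)(\psi'(\zeta))^{-1}\overline{(\psi'(\eta))^{-1}}$. The $B_1$ condition gives $|\psi'|^{-1}\lesssim \frac{1}{|Q|}\int_Q|\psi'|$ on Carleson boxes. I expect this to make $K_\Omega(z,\cdot)$ bounded on $\Omega$ for fixed $z$, so that the integral converges absolutely for $f\in L^1$. If so, the integral operator is well defined and continuous pointwise in $z$, and it agrees with the extension a.e. (pass to an a.e.\ convergent subsequence).

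For II, I will show that $\Pi_\Omega f$ is analytic for $f\in L^1(\Omega)$. With the local uniform bound on $K_\Omega(z,\cdot)$ for $z$ in compact subsets of $\Omega$, analyticity follows by differentiating under the integral, or by Morera together with Fubini. The bound comes from the kernel formula: $\sup_{\eta}|K(\zeta,\eta)|\lesssim (1-|\zeta|)^{-2}$, and $|\psi'(\eta)|^{-1}$ is bounded on $\mathbb D$ under $B_1$, taking $Q$ to be the whole disk-type box with $\ell=1$. Alternatively, without the pointwise kernel bound, one can argue as follows. Take $f_n\in L^1\cap L^2$ with $f_n\to f$ in $L^1$. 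Then $\Pi_\Omega f_n\in A^2$ converges in $L^{1,\infty}$, hence in measure. Analytic functions converging in measure on $\Omega$ and locally bounded in $L^{1,\infty}$ converge locally uniformly, via the subharmonic mean-value estimate applied to $|g|^{1/2}\in L^{2,\infty}\subset L^1_{loc}$. So the limit is analytic, and this also shows $A^{1,\infty}(\Omega)$ is closed in $L^{1,\infty}(\Omega)$.

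For I, take $f\in A^1(\Omega)$. I need analytic approximants in $A^1\cap A^2$ converging to $f$ in $L^1$. On $\mathbb D$, $g=(f\circ\psi)\psi'\in A^1(\mathbb D)$, and the dilates $g_r(\zeta)=g(r\zeta)$ converge in $A^1(\mathbb D)$ and are bounded. Then $f_r:=(g_r/\psi')\circ\psi^{-1}$ is analytic on $\Omega$ with $\|f_r-f\|_{L^1(\Omega)}=\|(g_r-g)\,\psi'/\psi'\|\cdots$. Precisely, $\int_\Omega|f_r-f|\,dA=\int_{\mathbb D}|g_r-g|\,|\psi'|\,dA$. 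This is where $B_1$ enters, since I need $|\psi'|$ to be comparable to bounded, or at least need convergence with the weight $|\psi'|$. The better choice is $g=f\circ\psi\in A^1(|\psi'|^2dA)$ approximated by dilates in the weighted norm. This is the main obstacle: dilations need not be bounded in weighted $A^1(w)$. I would try instead to use the weak-type bound to reduce to a dense class. It suffices to show $A^1\cap A^2$ is dense in $A^1(\Omega)$, and for this I would use the $L^2$-reproducing property on $f\cdot\chi_{\Omega_n}$, where $\Omega_n=\psi(r_n\mathbb D)$. Write $f=\lim \Pi_\Omega(f\chi_{\Omega_n})$ in measure: indeed $\Pi_\Omega(f\chi_{\Omega_n})\to\Pi_\Omega f$ in $L^{1,\infty}$ by weak-type continuity. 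It remains to show $\Pi_\Omega(f\chi_{\Omega_n})\to f$ pointwise. This holds by pairing with $K_\Omega(z,\cdot)\in A^2\cap L^\infty$: we have $\Pi_\Omega(f\chi_{\Omega_n})(z)=\langle f\chi_{\Omega_n},K_\Omega(z,\cdot)\rangle\to\langle f,K_\Omega(z,\cdot)\rangle$, and the latter equals $f(z)$ by the reproducing property for $A^1$ functions paired with bounded kernels. That property I would verify on the disk via the mean-value property after pulling back by $\psi$. Uniqueness of limits in measure then gives $\Pi_\Omega f=f$.
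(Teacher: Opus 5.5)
Your proof is correct, but for I it takes a different and more elementary route than the paper.

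\textbf{What the paper does.} For I, it invokes Proposition~\ref{p:density}: density of $A^2(\Omega)\cap A^1(\Omega)$ in $A^1(\Omega)$, obtained from the cyclic-vector results of Brown--Korenblum--Shapiro and Hedenmalm--Korenblum--Zhu. It then uses $\Pi_\Omega f_n=f_n$ together with weak-type continuity. For II, it cites Lemma~\ref{cor:AbsInt}, which gets absolute convergence of the kernel integral from the weak-type bound for $\Pi_\Omega^+$ and a continuity argument.

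\textbf{What you do.} You apply the $B_1$ condition on the box $Q(\theta,1)=\mathbb D$ to get $|\psi'|\gtrsim 1$ on $\mathbb D$. Hence $K_\Omega(z,\cdot)\in L^\infty(\Omega)$, locally uniformly in $z$. This gives at once the integral representation of the extension and its holomorphy. It also reduces I to the classical reproducing formula on $A^1(\mathbb D)$, applied to $g=(f\circ\psi)\psi'$.

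You should say explicitly why $g\in A^1(\mathbb D)$: $\int_{\mathbb D}|f\circ\psi|\,|\psi'|\,dA\le \norm{1/\psi'}_{L^\infty(\mathbb D)}\norm{f}_{L^1(\Omega)}$. This is the only place $B_1$ enters I, and it is exactly what fails in the paper's Example.

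\textbf{What each route buys.} Yours needs neither Proposition~\ref{p:density} nor its cyclicity machinery. The paper's Lemma~\ref{cor:AbsInt} uses only the weak-type hypothesis on $\Pi_\Omega^+$, not $B_1$ directly, and its density framework is set up for general $p$.

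\textbf{Two small corrections.}
\begin{itemize}
\item The $B_1$ condition gives $|\psi'|^{-1}\le [\,|\psi'|\,]_{B_1}\avg{|\psi'|}_Q^{-1}$ a.e.\ on $Q$, not $|\psi'|^{-1}\lesssim\avg{|\psi'|}_Q$. Your conclusion that $|\psi'|$ is bounded below is unaffected.
\item Your $f\mathbf 1_{\Omega_n}$ argument does not prove density of $A^1\cap A^2$ as announced; it directly proves $\Pi_\Omega f=f$. Once the integral representation from II is in hand, even that detour is unnecessary, since then $\Pi_\Omega f(z)=\int_\Omega K_\Omega(z,w)f(w)\,dA(w)=f(z)$ a.e.
\end{itemize}
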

\noindent We prove Corollary \ref{c:A1} and other related applications in Section \ref{s:RC}.

\subsection{Necessary conditions}\label{Subsection:Intro-Necessity}
The novelty of Theorem \ref{thm:B1} notwithstanding, $|\psi'| \in B_1$ is still not necessary for $\Pi_\Omega$ to be of weak-type $(1,1)$. Indeed, for $\psi(z) = \frac{1}{z-1}$ and $\Omega = \psi(\mathbb{D})$, one has that $\Pi_{\Omega}$ is of weak-type $(1,1)$ (since $\Omega$ is a half-space), but $|\psi'|\not\in B_1$. Instead, for any $1 \le p < \infty$, we have the following necessary condition on $v:=|\psi'|$ for $\Pi_{\Omega}$ to be of weak-type $(p,p)$: 
    \begin{equation}\label{e:Bp-nec-intro} \sup_{Q} \frac{\norm{v^{-1}}_{L^{p,\infty}(Q,v^2)} \norm{v^{-1}}_{L^{p'}(Q,v^2)}}{|Q|} < \infty,
    \end{equation}
where 
$\|v^{-1}\|_{L^{p'}(Q,v^2)}=\|v^{-1}\|_{L^{\infty}(Q)}$ when $p=1$. 
\begin{theorem}\label{thm:Nec}
    Let $\Omega \subset \mathbb{C}$ be a simply connected domain, $\psi \colon \mathbb{D} \rightarrow \Omega$ be a conformal map, and $1 \leq p < \infty$. If $\Pi_{\Omega}$ is of weak-type $(p,p)$, then $|\psi'|$ satisfies \eqref{e:Bp-nec-intro}. 
\end{theorem}

Notice that if the $L^{p,\infty}(Q,v^2)$ norm in \eqref{e:Bp-nec-intro} was replaced by the larger $L^p(Q,v^2)$ norm, \eqref{e:Bp-nec-intro} would recover the condition \eqref{e:Bp-intro}, that $v^{2-p} \in B_p$, which characterizes the strong-type $(p,p)$ bounds of $\Pi_{\Omega}$.
However, \eqref{e:Bp-nec-intro} is strictly weaker;
for example, the weight $v\coloneq |\psi'|$ for $\psi(z) = \frac{1}{(z-1)^\alpha}$ and $1\le \alpha \le 2$ satisfies \eqref{e:Bp-nec-intro} for every $\frac{2\alpha}{\alpha-1} > p \ge \frac{2\alpha}{\alpha+1}$, but $v^{2-p}$ does not belong to $B_p$ when $p= \frac{2\alpha}{\alpha+1}$. 
It is possible that $v=|\psi'|$ satisfying \eqref{e:Bp-nec-intro} is also sufficient for the weak-type bounds of $\Pi_{\Omega}$, although this currently seems out of reach. Such a conjecture is connected to the problem of characterizing the multiplier weak-type weights for singular integrals, which is also unresolved; see \cites{MW77,Saw85,Swe25} and Section \ref{Subsection:MixedWeighted} below for further discussion. 
Nonetheless, in the case $p>1$, we provide a sufficient condition for the weak-type $(p,p)$ bounds in terms of the Bergman maximal function \looseness=-1
	\[ M f = \sup_{Q} 
    \frac{\norm{f}_{L^1(Q)}}{|Q|} \mathbf 1_Q \]
satisfying a dual weighted estimate.
For $1 \leq p< \infty$ and a measure space $(X,\mu)$, the Lorentz space $L^{p,1}(X,\mu)$ consists of all $f$ on $X$ such that 
$$
    \|f\|_{L^{p,1}(X,\mu)} := p\int_0^{\infty} \mu(\{x \in X \colon |f(x)| > \lambda\})^\frac{1}{p} d\lambda < \infty.
$$
We are now ready to state our final main result.
\begin{theorem}\label{thm:Bp}
 Let $\Omega \subset \mathbb{C}$ be a simply connected domain, $\psi \colon \mathbb{D} \rightarrow \mathbb{C}$ be a conformal map, $v:= |\psi'|$, and $1< p< \infty$. If $M^v$ is bounded from $L^{p',1}(\mathbb{D}, v^2)$ to $L^{p'}(\mathbb{D},v^2)$, then $\Pi_{\Omega}$ is of weak-type $(p,p)$.
\end{theorem}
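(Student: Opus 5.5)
The plan is to transfer the problem to the disk, dualize the weak-type norm, dominate $\Pi$ by a positive dyadic operator, and then bring in the hypothesis on $M^v$.

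\textbf{Step 1: transfer to the disk.} The Bergman kernels satisfy the transformation rule
\[
K(z,w)=\psi'(z)\,K_\Omega(\psi(z),\psi(w))\,\overline{\psi'(w)}.
\]
Given $f\in L^p(\Omega)$, set $g=(f\circ\psi)\,\psi'$. Then
\[
(\Pi_\Omega f)\circ\psi\cdot\psi'=\Pi g \quad\text{and}\quad \|f\|_{L^p(\Omega)}^p=\int_{\mathbb D}|g|^p v^{2-p}\,dA .
\]
Changing variables in the distribution function gives
\[
|\{\abs{\Pi_\Omega f}>\lambda\}|=v^2(\{\abs{\Pi g}/v>\lambda\}).
\]
So the weak-type $(p,p)$ bound for $\Pi_\Omega$ is equivalent to the mixed-weighted estimate
\[
\norm{v^{-1}\Pi g}_{L^{p,\infty}(\mathbb D,v^2)}\lesssim \norm{g\,v^{(2-p)/p}}_{L^p(\mathbb D)}.
\]

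\textbf{Step 2: duality.} Since $p>1$, the space $L^{p,\infty}(v^2)$ is normable. Its norm is comparable to the supremum of $\abs{\int h\varphi\,v^2}$ over $\norm{\varphi}_{L^{p',1}(v^2)}\le 1$. With $h=v^{-1}\Pi g$, and using the self-adjointness of $\Pi$, we get
\[
\int v^{-1}\Pi g\;\varphi\, v^2\,dA=\int \Pi g\;\varphi v\,dA=\int g\;\overline{\Pi(\bar\varphi v)}\,dA .
\]
By H\"older's inequality with the splitting $|g|=|g|v^{(2-p)/p}\cdot v^{-(2-p)/p}$, this is at most
\[
\norm{g v^{(2-p)/p}}_{L^p}\,\Big(\int |\Pi(\bar\varphi v)|^{p'} v^{-(2-p)p'/p}\,dA\Big)^{1/p'} .
\]
A short computation shows $v^{-(2-p)p'/p}=v^{2-p'}$. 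Hence the second factor equals $\norm{v^{-1}\Pi(\bar\varphi v)}_{L^{p'}(v^2)}$. The theorem therefore reduces to the restricted-type dual estimate
\[
\norm{v^{-1}\Pi(v\varphi)}_{L^{p'}(\mathbb D,v^2)}\lesssim \norm{\varphi}_{L^{p',1}(\mathbb D,v^2)} .
\]
This is the $L^{p',1}\to L^{p'}$ bound for the conjugated operator $\Piv\varphi:=v^{-1}\Pi(v\varphi)$. I read $M^v$ as the analogous conjugate $M^v\varphi=v^{-1}M(v\varphi)$, which is the reading under which the hypothesis matches this reduction.

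\textbf{Step 3: from $\Pi$ to $M$ (the main obstacle).} This step passes from $\Piv$ to $\Mv$. First dominate $|\Pi(v\varphi)|$ by the positive dyadic Bergman operator
\[
\Pi^+h=\sum_{Q}\frac{\int_Q h}{|Q|}\mathbf 1_Q,
\]
summed over one of finitely many dyadic systems of Carleson boxes; this is the standard dyadic model of $\abs{K}$. Then I would try to control $\Pi^+ h$ by $M h$ in $L^{p'}(v^{2-p'})$-norm, tested against Lorentz functions.

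The pointwise bound $\Pi^+h\lesssim Mh$ fails in general, so I would argue by duality once more. Pair $\Pi^+(v\varphi)$ with $u\,v^{2-p'}$, $u\ge0$, and use a sparse/stopping-time selection of boxes. Because the top halves of dyadic boxes are pairwise disjoint and fill $\mathbb D$, the pairing becomes
\[
\sum_{Q\in\mathcal S}\frac{\int_Q v\varphi}{|Q|}\int_Q u\,v^{2-p'} .
\]
On each selected box I would replace $\frac{1}{|Q|}\int_Q v\varphi$ by $\inf_Q M(v\varphi)$, and use the sparseness (the sets $E_Q\subset Q$ are disjoint with $|E_Q|\gtrsim|Q|$) to bound the sum by
\[
\int M(v\varphi)\, M^{\mathcal S}(u v^{2-p'})\,dA .
\]
I would then apply the hypothesis $\Mv\colon L^{p',1}(v^2)\to L^{p'}(v^2)$ to the first factor. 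The second factor should be handled by the dual side of the same hypothesis, possibly after first reducing to $\varphi=\mathbf 1_E$ via the layer-cake/atomic structure of $L^{p',1}$.

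I expect this last balancing to be the delicate point: it must use the hypothesis exactly once without any $A_\infty$-type assumption on $v$. If the symmetric sparse bound does not close, I would instead try to prove directly a good-$\lambda$ or Coifman--Fefferman-type comparison of $\Piv$ with $\Mv$ on characteristic functions. That comparison would exploit the fact that, for Bergman-type tents, the ancestors of a box contribute geometrically decaying averages once a stopping time on $M(v\mathbf 1_E)$ is imposed.
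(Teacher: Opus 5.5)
Your Steps 1 and 2 are correct and match the paper's reduction. By duality and self-adjointness it suffices to show $\int_{\mathbb D}|\Pi^+(v\varphi)|^{p'}\sigma\,dA\lesssim\norm{\varphi}_{L^{p',1}(\mathbb D,v^2)}^{p'}$ with $\sigma=v^{2-p'}$. The gap is in Step 3, which carries the whole content of the theorem and is not actually carried out.

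\textbf{Why your main plan fails.} You pair against Lebesgue measure and arrive at $\int_{\mathbb D}M(v\varphi)\,M(u\sigma)\,dA$. H\"older in $L^{p'}(\sigma)\times L^p(\sigma^{1-p})$ then requires $\norm{M(u\sigma)}_{L^p(\sigma^{1-p})}\lesssim\norm{u\sigma}_{L^p(\sigma^{1-p})}=\norm{u}_{L^p(\sigma)}$. Since $\sigma^{1-p}=v^{2-p}$, this is boundedness of $M$ on $L^p(v^{2-p})$, i.e.\ $v^{2-p}\in B_p$. That is condition D, the strong-type hypothesis the theorem is meant to weaken. There is no `dual side of the same hypothesis' to invoke: $\Mv$ is sublinear and not self-adjoint, so its $L^{p',1}(v^2)\to L^{p'}(v^2)$ bound says nothing about $\Mv$ on $L^p(v^2)$.

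\textbf{Why your fallback, as framed, cannot work.} A Coifman--Fefferman comparison is the right tool. But it cannot hold `without any $A_\infty$-type assumption'. With $h\equiv 1$ one has $Mh=1$ while $\Pi^+h(z)\gtrsim\log\frac{1}{1-|z|}$ near $\mathbb T$. So $\int|\Pi^+h|^{p'}\sigma\lesssim\int|Mh|^{p'}\sigma$ fails for any $\sigma$ with $\sigma(\mathbb D)<\infty$ but $\int\sigma\log^{p'}\frac{1}{1-|z|}\,dA=\infty$.

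\textbf{The missing idea.} The hypothesis itself supplies the needed $B_\infty$ property of $\sigma$.
\begin{enumerate}
\item Since $\Mv\ge A_Q^v$ pointwise and $A_Q^v$ is self-adjoint on $L^2(v^2)$, the assumed bound gives uniform $L^p(v^2)\to L^{p,\infty}(v^2)$ bounds for the $A_Q^v$.
\item Sharpness of H\"older at a single scale converts this into the necessary condition \eqref{e:Bp-nec-sym} (Proposition \ref{prop:nec}).
\item Interpolating $\norm{v^{-1}}_{L^s(Q,v^2)}$ between the $L^{p,\infty}(Q,v^2)$ and $L^2(Q,v^2)$ norms shows $\sigma\in B_q$ for every $q>p'$ (Lemma \ref{l:dual}). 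In particular $\sigma(Q)\lesssim\sigma(T_Q)$.
\end{enumerate}
With this, do the pairing against $\sigma$ rather than Lebesgue measure. For $u\ge0$ with $\norm{u}_{L^p(\sigma)}\le 1$ and each dyadic grid $\mathcal D$,
\[
\sum_{Q\in\mathcal D}\avg{v\varphi}_Q\avg{u}_{\sigma,Q}\,\sigma(Q)\lesssim\sum_{Q\in\mathcal D}\int_{T_Q}M(v\varphi)\,M_\sigma u\;\sigma\,dA\le\norm{M(v\varphi)}_{L^{p'}(\sigma)}\norm{M_\sigma u}_{L^p(\sigma)} .
\]
Here the dyadic weighted maximal operator $M_\sigma$ is bounded on $L^p(\sigma)$ with a constant independent of $\sigma$. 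This is exactly Proposition \ref{p:CF}. Since $\norm{M(v\varphi)}_{L^{p'}(\sigma)}=\norm{\Mv\varphi}_{L^{p'}(v^2)}$, the hypothesis finishes the proof. So the hypothesis must be used twice: once to extract the weight condition on $\sigma$, and once for the final bound.
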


Combining the strong-type results of Bekoll\'e and Bonami with our partial results in Theorems \ref{thm:B1}-\ref{thm:Bp} above, we can state a summary theorem to compare the various conditions and describe the state of the art.
\begin{theorem}\label{thm:summ}
    Let $\Omega \subset \mathbb{C}$ be a simply connected domain, $\psi \colon \mathbb{D} \rightarrow \mathbb{C}$ be a conformal map, $v:= |\psi'|$, and $1\leq p< \infty$. The conditions
    \begin{itemize}
        \item[A.] $v$ satisfies \eqref{e:Bp-nec-intro}, 
        \item[B.] $\Pi_\Omega$ is of weak-type $(p,p)$,
        \item[C.] $M^v \colon L^{p'}(\mathbb D,v^2) \to L^{p',1}(\mathbb D,v^2)$, and
        \item[D.] $v$ satisfies \eqref{e:Bp-intro}, i.e., $v^{2-p} \in B_p$
    \end{itemize}
    are related as follows: 
    \begin{itemize}
 	\item[I.] if $p>1$, then $D. \implies C. \implies B. \implies A.$, and 
 	\item[II.] if $p=1$, then $D. \implies B. \implies A.$\, .
 	\end{itemize}
\end{theorem}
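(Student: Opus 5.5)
Theorem \ref{thm:summ} is a bookkeeping result: each implication is either one of the main theorems already stated or a classical fact about $B_p$ weights. I would prove the chain link by link.

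\emph{B $\Rightarrow$ A.} This is exactly Theorem \ref{thm:Nec} and holds for all $1\le p<\infty$. \emph{C $\Rightarrow$ B for $p>1$.} This is Theorem \ref{thm:Bp}. I read condition C as the restricted-type bound $M^v\colon L^{p',1}(\mathbb D,v^2)\to L^{p'}(\mathbb D,v^2)$ appearing in the hypothesis of Theorem \ref{thm:Bp}; the arrow in the statement is presumably meant in that direction. \emph{D $\Rightarrow$ B for $p=1$.} When $p=1$, condition \eqref{e:Bp-intro} is, with the convention $\|v^{-1}\|_{L^{\infty}(Q)}$ for $p'=\infty$,
\[
\sup_Q \frac{\|v^{-1}\|_{L^1(Q,v^2)}\,\|v^{-1}\|_{L^\infty(Q)}}{|Q|}<\infty .
\]
Since $\|v^{-1}\|_{L^1(Q,v^2)}=\|v\|_{L^1(Q)}$, this says precisely that $v\in B_1$, so Theorem \ref{thm:B1} applies.

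\emph{D $\Rightarrow$ C for $p>1$.} This is the only link needing an argument, and I would route it through a strong-type maximal bound. Write $\sigma=v^2$ and $M^v f=v^{-1}M(fv)$, understood as in the strong-type change of variables; the paper's precise definition should be checked, but any reasonable version reduces as below. Then
\[
\|M^v f\|_{L^{p'}(\sigma)}^{p'}=\int M(fv)^{p'}\,v^{2-p'}\,dA ,
\]
so strong boundedness of $M^v$ on $L^{p'}(\mathbb D,v^2)$ is equivalent to $M$ being bounded on $L^{p'}(v^{2-p'})$. By the Bekoll\'e--Bonami theorem this holds if and only if $v^{2-p'}\in B_{p'}$.

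Next I would verify the duality $w\in B_p \iff w^{1-p'}\in B_{p'}$, here with $w=v^{2-p}$, so that
\[
w^{1-p'}=v^{(2-p)(1-p')}=v^{2-p'} .
\]
The exponent identity holds because $(2-p)(1-p')=-(2-p)/(p-1)=2-p'$. The duality itself follows directly from the symmetric form of \eqref{e:Bp-intro}, which is invariant under $p\leftrightarrow p'$. Hence D gives strong boundedness of $M^v$ on $L^{p'}(v^2)$. Finally, $\|f\|_{L^{p'}}\lesssim\|f\|_{L^{p',1}}$ (the embedding $L^{p',1}\subset L^{p'}$) yields C.

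The main obstacle I expect is this last link: matching the exact definition of $M^v$ and the direction of the Lorentz spaces so that the reduction to weighted $M$ on the disk is legitimate, and confirming the $B_p$ duality computation. Everything else is citation of Theorems \ref{thm:B1}, \ref{thm:Nec} and \ref{thm:Bp}.
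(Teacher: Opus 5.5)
Your proposal is correct and follows the same route as the paper, which likewise assembles the chain from Theorems \ref{thm:Nec}, \ref{thm:Bp}, \ref{thm:B1} and the Bekoll\'e--Bonami strong-type theory for D $\Rightarrow$ C. Your reading of C as $M^v\colon L^{p',1}(\mathbb D,v^2)\to L^{p'}(\mathbb D,v^2)$ is the intended one (it matches Theorem \ref{thm:Bp} and Remark \ref{r:unified}), and your argument for D $\Rightarrow$ C correctly fills in what the paper only cites: the isometry $f\mapsto fv$ onto $L^{p'}(v^{2-p'})$, the $p\leftrightarrow p'$ symmetry of \eqref{e:Bp-intro}, and the embedding $L^{p',1}\subset L^{p'}$.
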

In both cases, the implication of B. to A. is our Theorem \ref{thm:Nec} above. In the case $p>1$, the implication C. to B. is our Theorem \ref{thm:Bp} and D. to C. follows from the results of Bekoll\'e and Bonami for the strong-type $(p,p)$ bound. Finally, the remaining implication, D. to B. when $p=1$ is precisely our Theorem \ref{thm:B1}. 

Let us make three further comments on our results stated from this perspective. First, we can modify statement C. in such a way that the chain of implications is the same for all $p \ge 1$. We postpone this technical discussion until after the proof of Theorem \ref{thm:Pi-M1} in Remark \ref{r:unified}.

Second, to characterize the $\Omega$ such that $\Pi_\Omega$ is weak-type $(p,p)$, one should compare conditions A. and C. of Theorem \ref{thm:summ}. Is C. strictly stronger than A.? 

Third, we may also include in Theorem \ref{thm:summ} a slightly stronger boundedness result for the positive Bergman projection $\Pi_\Omega^+$ defined by
    \[ \Pi_\Omega^+ f (z) = \int_\Omega \abs{K_\Omega(z,w)} f(w) \, dw.\]
Immediately prior to statement B. in the chain on implications, we may place the statement
\begin{itemize}
    \item[B'.] $\Pi_\Omega^+$ is of weak-type $(p,p)$.
\end{itemize}
In fact, throughout the remainder of this paper, the upper estimates we prove will actually be for the a priori larger positive Bergman projection. However, at this time, the authors are unaware of any boundedness results which hold for $\Pi_\Omega$ yet fail for $\Pi_\Omega^+$.

\subsection{Mixed-weighted weak-type estimates}\label{Subsection:MixedWeighted}
As discussed above, our strategy is to reformulate the weak-type bounds of $\Pi_{\Omega}$ using the change of variables  
	\begin{equation}\label{e:transformation} C_\psi \circ \Pi_{\Omega} = \Pi \circ C_\psi, \end{equation}
where $C_\psi f := (f \circ \psi ) \psi'$. Note that \eqref{e:transformation} follows from the fact that $C_\psi$ is a unitary mapping from $L^2(\Omega)$ to $L^2(\mathbb{D})$ that preserves analytic functions. 
Given a function $f$ on $\Omega$, define $g =(f \circ \psi)\frac{\psi'}{|\psi'|}$. Then, applying \eqref{e:transformation}, we see that 
\[
    \left\{ z \in \Omega \colon \abs{\Pi_\Omega f(z)} > \lambda \right\} = \left\{ \psi(w) \colon \abs{ \Pi^vg(w)} > \lambda \right\}
\]
for any $\lambda>0$ and
\[
    \norm{g}_{L^p(\mathbb D,v^2)} = \norm{f}_{L^p(\Omega)},
\]
where $v = |\psi'|$ and $\Pi^v:= v^{-1}\Pi(v\,\cdot)$. We have established the following connection.\looseness=-1
\begin{proposition}\label{p:change}
    Let $\Omega \subset \mathbb{C}$ be a simply connected domain, $\psi\colon\mathbb{D}\rightarrow \Omega$ be a conformal map, $v = |\psi'|$, and $1\le p < \infty$. Then $\Pi_\Omega$ is weak-type $(p,p)$ if and only if the mixed-weighted weak-type estimate
\[ 
\|\Pi^vf\|_{L^{p,\infty}(\mathbb{D},v^2)} := \sup_{\lambda>0} \lambda v^2(\{ z \in \mathbb{D} \colon |\Pi^vf(z)| > \lambda \})^{\frac 1p} \lesssim \norm{f}_{L^p(\mathbb D, v^{2})}\]
holds for all $f \in L^p(\mathbb D,v^2)$. 

Furthermore, $\Pi_\Omega^+$ is of weak-type $(p,p)$ if and only if 
    \[ \norm{(\Pi^+)^v f}_{L^{p,\infty}(\mathbb D,v^2)} \lesssim \norm{f}_{L^p(\mathbb D,v^2)}\]
for all $f \in L^p(\mathbb D,v^2)$.
\end{proposition}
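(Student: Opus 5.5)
The plan is to make the change of variables rigorous and then read off both equivalences. First I will verify \eqref{e:transformation}. Since $\psi$ is a conformal bijection, the Jacobian of $\psi$ is $|\psi'|^2$, so $\int_{\mathbb D} |(f\circ\psi)\psi'|^2\,dA = \int_\Omega |f|^2\,dA$. Thus $C_\psi$ is an isometry from $L^2(\Omega)$ to $L^2(\mathbb D)$. It is onto, with inverse $h \mapsto (h\circ\psi^{-1})(\psi^{-1})'$, and it maps $A^2(\Omega)$ onto $A^2(\mathbb D)$ because $\psi'$ is analytic and nonvanishing. A unitary map that carries one closed subspace onto another intertwines the corresponding orthogonal projections, which gives $C_\psi\Pi_\Omega=\Pi C_\psi$ on $L^2$. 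Equivalently, the kernels satisfy $K_\Omega(\psi(z),\psi(w))\psi'(z)\overline{\psi'(w)}=K(z,w)$. Taking absolute values of this identity yields the analogous intertwining for the positive operators, $|\psi'|\,(\Pi_\Omega^+ f)\circ\psi = \Pi^+(|C_\psi f|)$ and more generally $=\Pi^+(v\,(f\circ\psi))$, for $f$ in any class where the integrals make sense.

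Next I will compute the distribution functions. Set $g=(f\circ\psi)\psi'/v$, so that $vg=C_\psi f$. Then
\[ \Pi^v g = v^{-1}\Pi(C_\psi f) = v^{-1}C_\psi(\Pi_\Omega f) = (\Pi_\Omega f\circ\psi)\,\frac{\psi'}{v}. \]
It follows that $|\Pi^vg|=|\Pi_\Omega f|\circ\psi$. Changing variables once more with Jacobian $v^2$ gives
\[ |\{z\in\Omega:|\Pi_\Omega f(z)|>\lambda\}| = v^2(\{w:|\Pi^vg(w)|>\lambda\}), \]
and likewise $\|g\|_{L^p(v^2)}=\|f\|_{L^p(\Omega)}$. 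The map $f\mapsto g$ is a bijection from functions on $\Omega$ to functions on $\mathbb D$, with inverse $g\mapsto (g\,v/\psi')\circ\psi^{-1}$. Hence the supremum over $\lambda$ and over $f$ of one weak-type quotient equals that of the other, and both implications follow. For $\Pi^+$ the same computation applies verbatim with $|K|$, since the phase factor $\psi'/v$ is unimodular and disappears in absolute values: $(\Pi^+)^v g = (\Pi^+_\Omega f)\circ\psi$.

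The main obstacle is the domain issue. Identity \eqref{e:transformation} is established on $L^2$, but the weak-type statements concern $L^p$, $p\ne 2$. I would handle this by proving the weak-type inequality on the dense class $L^2\cap L^p$ and noting that $f\mapsto g$ maps $L^2(\Omega)\cap L^p(\Omega)$ onto $L^2(\mathbb D)\cap L^p(\mathbb D,v^2)$. Here $\|g\|_{L^2(\mathbb D)}$ is not the same as $\|vg\|_{L^2}$, so more precisely I would use the class $\{g: vg\in L^2\}$; in any case this class is dense and the correspondence is exact on it. Alternatively, I would interpret the operators via their integral kernels, which are absolutely convergent for each fixed point, so the kernel identity gives the pointwise equality for every $f$ for which either side is defined. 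This second route is cleaner and I would adopt it, particularly for $\Pi^+$.
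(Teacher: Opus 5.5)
Your proposal is correct and is essentially the paper's argument: the unitary intertwining $C_\psi\Pi_\Omega=\Pi C_\psi$, then the substitution $g=(f\circ\psi)\psi'/v$ giving $|\Pi^v g|=|\Pi_\Omega f|\circ\psi$ and $\|g\|_{L^p(\mathbb D,v^2)}=\|f\|_{L^p(\Omega)}$, then the kernel transformation formula for $\Pi^+$; your dense-class discussion via $L^2(\mathbb D,v^2)\cap L^p(\mathbb D,v^2)$ makes explicit what the paper leaves implicit. Two small corrections: for $\Pi^+$ the unimodular factor $\psi'/v$ sits inside the integral against $|K|$ and does not cancel, so you should take $g=f\circ\psi$ instead (for $f\ge 0$, which suffices for a positive operator); this is the paper's $C_\psi^+f=(f\circ\psi)|\psi'|$ and your own identity $|\psi'|\,(\Pi_\Omega^+f)\circ\psi=\Pi^+(v\,(f\circ\psi))$. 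Also, the kernel integrals for $\Pi_\Omega$ need not converge absolutely for general $f\in L^p(\Omega)$ with $p\neq 2$ (the example in Section~\ref{s:RC} gives $f\in A^1(\Omega)$ with $\int_\Omega|K_\Omega(z,w)||f(w)|\,dA(w)=+\infty$ at every $z$), so the pointwise kernel route is clean only for $\Pi^+$ on nonnegative functions, and for $\Pi$ itself your first, dense-class route is the one to use.
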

\begin{proof}
As stated above, the first statement follows directly from the two displays prior to the proposition. To establish the result for $\Pi_\Omega^+$, note that the transformation law \eqref{e:transformation} implies the kernel transformation formula
    \[ K_\Omega(z,w) = K_{\mathbb D}(\varphi(z),\varphi(w)) \varphi'(z) \overline{\varphi'(w)}, \quad \varphi = \psi^{-1}, \quad z,w \in \Omega,\]
from which the identity $C_\psi^+ \circ \Pi_\Omega^+ = \Pi^+ \circ C_\psi^+$ where $C_\psi^+f = (f \circ \psi) |\psi'|$. From this, one follows the same outline as for $\Pi_\Omega$.
\end{proof}

\noindent Our question can hence be addressed by studying the following general problem.
\begin{problem}\label{prob:MWWT}
Given $1 \leq p < \infty$, characterize the weights $u$ such that 
$\Pi^u$ is bounded from $L^p(\mathbb{D},u^2)$ to $L^{p,\infty}(\mathbb{D},u^2)$. 
\end{problem}

The bound of $\Pi^u$ from $L^p(\mathbb{D},u^2)$ to $L^{p,\infty}(\mathbb{D},u^2)$ is an instance of a type of bound first investigated by Sawyer in \cite{Saw85} for the Hardy-Littlewood maximal operator, and later expanded upon for Calder\'on-Zygmund singular integral operators in \cites{CUMP05, LOP19}. In particular, Sawyer's conjecture was proved in \cite{LOP19}, that if $T$ is a Calder\'on-Zygmund operator, $u \in A_1$, and $v \in A_{\infty}$, then $T^v := v^{-1}T(v\,\cdot)$ satisfies 
$$
    \|T^vf\|_{L^{1,\infty}(\mathbb{R}^n,uv)} \lesssim \|f\|_{L^1(\mathbb{R}^n,uv)}
$$
for all $f \in L^1(\mathbb{R}^n,uv)$. See \cites{CRR2020, OP2016, OPR2016, LOBP2019} for more on such mixed-weighted weak-type bounds for singular integrals with respect to Muckenhoupt weights. 

Similar mixed-weighted weak-type $(1,1)$ bounds were recently established in the Bergman setting in \cite{CNSW25}. Those results, specialized to the case $u=v$, require $v \in B_1 \cap B_p(v)$ for some $1 \le p < \infty$. Note that this condition is strictly stronger than $v \in B_1$ since it forces $v^2$ to be integrable -- in fact, $v^2$, the underlying measure in Problem \ref{prob:MWWT}, would be doubling. Indeed, the conformal weight $v(z)=|1-z|^{-1}$ arising from $\psi(z)=\log(1-z)$ belongs to $B_1$, but $v^2$ is not integrable up to the boundary of $\mathbb D$. 

To establish Theorem \ref{thm:B1}, we prove a different special case of \cite{CNSW25}*{Conjecture 1.6}, namely, the case $u=v \in B_1 \cap \text{BHO}$, where BHO is the class of bounded hyperbolic oscillation given in Definition \ref{d:BHO} below. 
\begin{theorem}\label{thm:sawyer}
    If $u \in B_1 \cap \text{BHO}$, then $\Pi^u$ is bounded from $L^1(\mathbb{D},u^2)$ to $L^{1,\infty}(\mathbb{D},u^2)$. 
\end{theorem}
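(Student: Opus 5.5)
We would prove Theorem \ref{thm:sawyer} for the larger positive operator $(\Pi^+)^u f = u^{-1}\Pi^+(uf)$, which suffices since $|\Pi^u f|\le (\Pi^+)^u|f|$. Fix $\lambda>0$ and $f\ge 0$ in $L^1(\mathbb D,u^2)$, and set $g=uf$, so that $\norm{f}_{L^1(u^2)}=\int g\,u\,dA$. The target estimate becomes
\[ u^2(\{u^{-1}\Pi^+ g>\lambda\}) \lesssim \lambda^{-1}\int_{\mathbb D} g\,u\,dA. \]
The plan is a Calder\'on--Zygmund decomposition of $g$ with respect to Lebesgue measure on the dyadic Carleson boxes (tents). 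It is adapted to a level that moves with $u$, since $u$ appears as a multiplier.

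We take a dyadic system of Carleson boxes (two shifted systems suffice to dominate $\Pi^+$ by a sum of positive dyadic operators $\sum_Q \frac{\mathbf 1_Q}{|Q|}\int_Q g$). We use $\mathrm{BHO}$ in the following way: $u$ is comparable to a constant $u(c_Q)$ on each top half $T_Q$ of a box, the hyperbolic-size Whitney piece. Consequently $u^2(T_Q)\approx u(c_Q)^2|T_Q|$. Combined with $B_1$, this gives the key pointwise control
\[ \frac{1}{|Q|}\int_Q g \le \frac{1}{|Q|}\int_Q g\,u \cdot \norm{u^{-1}}_{L^\infty(Q)} \lesssim u(z)\,\frac{1}{u(Q)}\int_Q g\,u \quad (z\in T_Q), \]
since $B_1$ says $\norm{u^{-1}}_{L^\infty(Q)}\lesssim |Q|/u(Q)$, and $u$ on $T_Q$ dominates $\inf_Q u$ up to constants. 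Thus the level set of the dyadic model operator divided by $u$ is controlled by a maximal operator adapted to the measure $u\,dA$ evaluated on $g$, not by $g$ alone.

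Next we decompose the dyadic sum by stopping time. Let $\mathcal S$ be the maximal boxes with $\frac{1}{u(Q)}\int_Q gu>\lambda$, and set $E=\bigcup_{\mathcal S}Q$. The exceptional set has $u(E)\le \lambda^{-1}\int gu$, and we must convert this to $u^2$ measure. We would not try to bound $u^2(E)$ directly, which can fail since $u^2$ need not be doubling or integrable. Instead, in each box $Q\in\mathcal S$, we split by tents. For boxes $R\subsetneq Q$ the contribution is supported in $Q$. Outside the stopping boxes, the sum is at most $\lambda u$ times a geometric factor by the pointwise bound above. For the part of the set lying in $E$, we use that $\{u^{-1}\Pi^+g>\lambda\}\cap T_R$ is relevant only when $\frac{1}{|P|}\int_P g\gtrsim \lambda u(c_R)$ for some ancestor $P\supseteq R$. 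We bound $u^2(T_R)\approx u(c_R)^2|T_R|$ and sum over $R$ using $u(c_R)\le \lambda^{-1}\frac{1}{|P|}\int_P g$ for one factor of $u$, while keeping $u(c_R)|T_R|\approx u(T_R)$ for the other. Summing the disjoint $T_R$ inside $P$ gives $u(P)$, and $u(P)\frac{1}{|P|}\int_P g\lesssim \int_P gu$ by $B_1$ once more. The sum over the relevant ancestors $P$ must be organized as a sparse or stopping family so that it telescopes to $\lambda^{-1}\int gu$.

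The main obstacle is this last summation. A Whitney cube $T_R$ can have many ancestors $P$ exceeding the threshold, and $u(c_R)$ is not monotone along chains of boxes. We would handle it by choosing, for each $R$, the largest $P$ that triggers the inequality. That gives a disjointness (Carleson packing) of the resulting pairs. We expect to need the exponential decay of the off-diagonal part of the Bergman kernel, i.e.\ the factor $|Q|/|P|$ in the dyadic model, to absorb the multiplicity. A weighted Carleson embedding for $u\,dA$, which is available because $u\in B_1$ makes $u$ behave like a doubling weight on tents, should close the argument.
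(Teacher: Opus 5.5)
\paragraph{Gap 1: the dyadic sum is treated as if it were a single average.} This step fails. You claim two things:
\begin{itemize}
\item off the stopping boxes, the dyadic sum is at most $\lambda u$ times a geometric factor;
\item inside them, $\{u^{-1}\Pi^+g>\lambda\}\cap T_R$ matters only if a single ancestor $P\supseteq R$ has $\avg{g}_P\gtrsim\lambda u(c_R)$.
\end{itemize}
Both rest on bounding $\sum_{P\ni z}\avg{g}_P$ by a constant times $\sup_{P\ni z}\avg{g}_P$, i.e.\ on $\Pi^+g\lesssim Mg$ pointwise, and that is false. The factor $|Q|/|P|$ makes the contribution of one piece $g\mathbf 1_Q$ to the boxes $P\supseteq Q$ geometric. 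It does nothing for the sum over the many scales at which $g$ carries mass.

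Already $u\equiv 1$, $g\equiv 1$ is a counterexample. Here $\Pi^+1(z)\sim\log\frac{1}{1-|z|}$ while every average equals $1$. So for large $\lambda$ your stopping family is empty and no ancestor triggers, yet the level set is a nonempty annulus.

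Replacing $\Pi^u$ by $M^u$ needs a norm-level argument, which is what the paper does in Theorem \ref{thm:Pi-M1}:
\begin{itemize}
\item it dualizes $\norm{(\Pi^+)^uf}_{L^{1,\infty}(\mathbb D,u^2)}^{1/r}$ against $h\in L^{s,1}(\mathbb D,u^2)$;
\item it majorizes $h$ by the Rubio de Francia iterate $Rh$ of $M^u$;
\item it applies the Coifman--Fefferman inequality (Proposition \ref{p:CF}) with exponent $1/r<1$ and weight $(Rh\cdot u)u^{1/s}$.
\end{itemize}
There the sum over scales is absorbed by the subadditivity of $t\mapsto t^{1/r}$ and the property $w(Q)\lesssim w(T_Q)$ of that weight. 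That weight lies in $B_1\subset B_\infty$ by the reverse H\"older inequality (Lemma \ref{l:RH}) applied to $Rh\cdot u\in B_1\cap\BHO$.

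\paragraph{Gap 2: the maximal-function part.} This has two further problems.

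First, your ``key pointwise control'' is not invariant under $u\mapsto cu$ with $g$ fixed: the left side is unchanged and the right side is multiplied by $c$. So it cannot hold on the scale-invariant class $B_1\cap\BHO$. What $B_1$ actually gives is $\avg{g}_Q\lesssim u(Q)^{-1}\int_Q gu$, and your stopping rule $u(Q)^{-1}\int_Q gu>\lambda$ has the same homogeneity defect. The threshold must be $\lambda$ times a value of $u$, and $u$ varies along chains of boxes. This is why the paper normalizes to $u^2(\{Mf>u\})\lesssim\norm{f}_{L^1(\mathbb D,u)}$, splits by the level sets $\{a^k<u\le a^{k+1}\}$, and uses the maximal cubes $Q_{k,j}$ of $\{Mf>a^k\}$.

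Second, the multiplicity you flag as the main obstacle is real, and kernel decay or a Carleson embedding for $u\,dA$ does not cure it. Your bound charges each selected $P$ the amount $\lambda^{-1}\avg{g}_P u(P)\lesssim\lambda^{-1}\int_P gu$. But the threshold $\lambda u(c_R)$ changes with $R$, so the boxes selected for different values of $u$ are nested rather than disjoint, and a point can lie in unboundedly many of them. What is missing is decay in the level index.

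The paper gets this decay from the reverse H\"older inequality for $u\in B_1\cap\BHO$ (Lemma \ref{lem:b1}):
\[ u^2(Q_{m,i}\cap\{a^m<u\le a^{m+1}\})\lesssim a^{(1-\delta)m}|Q_{m,i}|\avg{u}_{Q_{m,i}}^{1+\delta}. \]
Combined with the localized weak $(1,1)$ bound for $M$, this gives a factor $a^{-m\delta}$, summable over $m\ge k$ inside each principal cube. The principal cubes, along which $\avg{u}$ at least doubles, then sum to $\lesssim u(z)$ by $B_1$ (Theorem \ref{thm:max-1}). This is where $\BHO$ of $u$ is genuinely needed, not merely through $u\approx u(c_Q)$ on $T_Q$.
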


\subsection{Organization}

In Section \ref{s:Preliminaries}, we collect preliminary results on weights and dyadic models. In Section \ref{s:Necessity}, we establish the necessity of \eqref{e:Bp-nec-intro} by proving Theorem \ref{thm:Nec}. In Section \ref{s:Reduction}, we reduce the problem to understanding weak-type bounds for the Bergman maximal operator and prove Theorem \ref{thm:Bp}. In Section \ref{s:max}, we establish the maximal function estimates and deduce Theorem \ref{thm:B1}. Finally, in Section \ref{s:RC}, we provide several applications of our results.

\section{Preliminaries}\label{s:Preliminaries}

We write $A\lesssim B$ if $A\leq CB$ for some $C>0$, and write $A\sim B$ if $A\lesssim  B\lesssim A$. 
\subsection{Weights} 
A weight is a nonnegative locally integrable function on $\mathbb D$. For $1 \le p < \infty$, we say a weight $u$ belongs to the class $B_p$ if there exists $C>0$ such that \looseness=-1
 	\begin{equation}\label{e:Bp} \avg{u}_Q \avg{u^{-1}}_{\frac{1}{p-1},Q} \le C \end{equation}
 for all Carleson boxes $Q$. Throughout, we use the local average notation for $E \subset \mathbb D$, $0 < r < \infty$, and a weight $u$:
 	\[ \avg{f}_{u,r,E} = \left(\frac{1}{u(E)} \int_E |f|^r u \, dA \right)^{\frac 1r}, \quad \avg{f}_{u,\infty,E} = \norm{f}_{L^\infty(E)}. \]
When $r=1$ or $u=1$, the parameters are dropped from the notation. For $1 \le p < \infty$, we denote by $[u]_{B_p}$ the $B_p$ characteristic of $u$, which is the infimum over all $C$ such that \eqref{e:Bp} holds. Since $\avg{f}_{\frac{1}{p-1},E}$ is a decreasing function of $p$, we have that $B_p \subset B_q$ with $[u]_{B_q} \le [u]_{B_p}$ whenever $p \leq q$.

There are many $B_\infty$-type conditions describing the limiting class as $p \rightarrow \infty$ that are, in general, not equivalent. However, these conditions all coincide when the weight is the Jacobian of a conformal map, as in our setting; see \cite{APR19}. We say a weight $u$ belongs to the class $B_\infty$ if there exist $C,\delta>0$ such that \looseness=-1
\begin{equation}\label{e:Binfty} 
    \frac{|E|}{|Q|} \le C \left( \frac{u(E)}{u(Q)} \right)^{\delta}
\end{equation}
for every Carleson box $Q$ and every measurable $E \subset Q$. By H\"older's inequality, we have that if $u\in B_p$ for $1\le p < \infty$, then $u$ satisfies the above $B_{\infty}$ condition with $C = [u]_{B_p}^{\frac{1}{p}}$ and $\delta= \frac{1}{p}$.

\subsection{Dyadic model}
\label{ss:dyad}
Given a Carleson box $Q=Q(\theta,\ell)$, let $T_Q$ denote the corresponding ``top half''
\[ T_Q=\left \{re^{2\pi\mathrm{i} \phi} \in \mathbb{D}\colon \frac{\ell}{2} \leq 1-r \leq \ell,\, |\theta-\phi| \leq \frac{\ell}{2} \right\}.\]
It is well-known that Carleson boxes can be well-approximated by those coming from two universal dyadic systems; say $\mathcal{D}_1, \mathcal{D}_2.$ That is, given an arbitrary box $Q$ there exist boxes $P,P' \in \mathcal{D}_1 \cup \mathcal{D}_2$ satisfying $P' \subset Q \subset P$ and $|Q| \sim |P| \sim |P'|.$ For example, one can take 
\begin{align*}
&\mathcal{D}_1= \left\{Q\left(\tfrac{j}{2^k},\tfrac{1}{2^k}\right) \colon j,k \in \mathbb{N}, \, 0 \leq j< 2^k \right\},\\
&\mathcal{D}_2= \left\{Q\left( \tfrac{j}{2^k} + \tfrac{2}{3}, \tfrac{1}{2^k}\right) \colon j,k \in \mathbb{N}, \,0 \leq j< 2^k\right\}.\end{align*}
The precise form of $\mathcal D_j$ is not important, but rather we note a few essential properties of any dyadic grid $\mathcal D$:
\begin{itemize}
	\item[(i)] $\{T_Q \colon Q \in \mathcal D\}$ forms a partition of $\mathbb D$, and
	\item[(ii)] any two $Q,P \in \mathcal D$ are either disjoint, or one is contained in the other.
\end{itemize}

For a Carleson box $Q$, we denote by $A_Q$ the averaging operator given by 
	\[ A_Q f = \avg{f}_Q \mathbf 1_Q.\]
It is by now well-known that $\Pi$ admits a ``sparse domination'' by a superposition of $A_Q$. In fact, this bound holds for the \textit{a priori} larger positive Bergman projection 
    \[ \Pi^+ f(z) = \int_{\mathbb D} \frac{f(w)}{|1-z\overline{w}|^2} \, dA(w).\] 

More precisely, for all $f \in L^1_{\text{loc}}(\Omega)$ and $z \in \Omega$, one has 
    \begin{equation}\label{e:sparse} \abs{\Pi f(z) } \le \Pi^+ |f|(z) \le \sum_{Q \in \mathcal D_1 \cup \mathcal D_2} A_Q|f|(z);\end{equation}
 see \cites{PR13, RTW17}. 
Therefore, to prove norm bounds involving $\Pi$, it suffices to replace $\Pi$ by $\sum_{Q \in \mathcal D} A_Q$ for some dyadic grid $\mathcal D$. This will be our approach, and hence all our results hold with $\Pi$ replaced by $\Pi^+$. Similarly, by well-approximating any Carleson box $Q$ by one from $\mathcal D_1 \cup \mathcal D_2$, it suffices to replace $M$ by the smaller dyadic maximal function
    \[ M^{\mathcal D}f = \sup_{Q \in \mathcal D} \avg{|f|}_Q \mathbf 1_Q,\] 
since $M \le M^{\mathcal D_1} + M^{\mathcal D_2}$ pointwise.

\section{\texorpdfstring{\boldmath Necessity of condition \eqref{e:Bp-nec-intro}}{}}\label{s:Necessity}

The goal of this section is to prove Theorem \ref{thm:Nec}, i.e., the necessity of \eqref{e:Bp-nec-intro} for the weak-type $(p,p)$ property of $\Pi_{\Omega}$. We recall that, for a general weight $u$ and $1 \leq p<\infty$, this property states that \begin{equation} \sup_Q \frac{\norm{ u^{-1} }_{L^{p,\infty}(Q,u^2)} \norm{u^{-1}}_{L^{p'}(Q,u^2)}}{|Q|} < \infty, \label{e:Bp-nec-sym} \end{equation} where $\|u^{-1}\|_{L^{p'}(Q,u^2)}$ is understood as $\|u^{-1}\|_{L^{\infty}(Q)}$ when $p=1$. We first show that \eqref{e:Bp-nec-sym} is necessary for the mixed-weighted bounds of the Bergman maximal operator. 
\begin{proposition}\label{prop:nec} 
If $u$ is a weight such that either
\[
    M^u\colon L^p(\mathbb{D},u^2) \to L^{p,\infty}(\mathbb{D},u^2) \quad\quad\text{for}\quad\quad1 \le p < \infty
\]
or
\[
    M^u \colon L^{p',1}(\mathbb{D},u^2) \to L^{p'}(\mathbb{D},u^2) \quad\quad\text{for}\quad\quad 1 < p < \infty,
\]
then $u$ satisfies \eqref{e:Bp-nec-sym}.
\end{proposition}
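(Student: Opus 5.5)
The plan is to test the hypothesised bounds on functions localized to a single Carleson box $Q$. We use the pointwise lower bound $M^u f \ge u^{-1}\avg{u|f|}_Q \mathbf 1_Q$, where $M^u f = u^{-1}M(uf)$. We treat the two hypotheses separately, although both reduce to the same computation.

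\emph{First hypothesis (weak-type $(p,p)$).} For $p>1$, take $f = u^{p'-2}\mathbf 1_Q = (u^{-1})^{p'-1} u^{-1}\mathbf 1_Q$. Then $uf = u^{p'-1}\mathbf 1_Q$, so
\[
\avg{uf}_Q = \frac{1}{|Q|}\int_Q u^{-p'}u^2\,dA = \frac{\norm{u^{-1}}_{L^{p'}(Q,u^2)}^{p'}}{|Q|},
\]
and $\norm{f}_{L^p(\mathbb D,u^2)} = \bigl(\int_Q u^{(p'-2)p}u^2\bigr)^{1/p} = \norm{u^{-1}}_{L^{p'}(Q,u^2)}^{p'/p}$, because $(p'-2)p+2 = 2-p' $, which is the exponent of $u$ in $u^{-p'}u^2$. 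Since $M^u f \ge \avg{uf}_Q\, u^{-1}\mathbf 1_Q$, the weak-type hypothesis gives
\[
\avg{uf}_Q\,\norm{u^{-1}}_{L^{p,\infty}(Q,u^2)} \le \norm{M^u f}_{L^{p,\infty}(\mathbb D,u^2)} \lesssim \norm{u^{-1}}_{L^{p'}(Q,u^2)}^{p'/p}.
\]
Dividing and using $p'-p'/p=1$ yields \eqref{e:Bp-nec-sym}. A truncation step, replacing $u^{-1}$ by $\min(u^{-1},N)$ on $Q$ and letting $N\to\infty$, ensures finiteness so that the division is legitimate.

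For $p=1$, choose $E\subset Q$ on which $u^{-1} > (1-\epsilon)\norm{u^{-1}}_{L^\infty(Q)}$ with $|E|>0$, and set $f = u^{-1}\mathbf 1_E$. Then $\avg{uf}_Q = |E|/|Q|$ and $\norm{f}_{L^1(u^2)} = \int_E u \le \frac{|E|}{(1-\epsilon)\norm{u^{-1}}_{L^\infty(Q)}}$. The same inequality then gives $\norm{u^{-1}}_{L^{1,\infty}(Q,u^2)}\norm{u^{-1}}_{L^\infty(Q)}\lesssim |Q|$.

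\emph{Second hypothesis ($L^{p',1}\to L^{p'}$).} Here we use a duality argument. The lower bound $M^u g \ge u^{-1}\avg{ug}_Q\mathbf 1_Q$ together with the hypothesis gives
\[
\norm{u^{-1}}_{L^{p'}(Q,u^2)}\,\frac{1}{|Q|}\int_Q g\,u\,dA \lesssim \norm{g}_{L^{p',1}(u^2)}
\]
for all $g\ge0$ supported in $Q$. Writing $\int_Q gu = \int_Q g\,u^{-1}\,u^2$ and invoking the duality $(L^{p',1})^* = L^{p,\infty}$ with respect to the measure $u^2$, the supremum over $g$ of the left side, with $\norm{g}_{L^{p',1}(u^2)}\le1$, is comparable to $\norm{u^{-1}}_{L^{p,\infty}(Q,u^2)}$. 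This gives \eqref{e:Bp-nec-sym} directly.

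The main technical obstacle is justifying the division and the duality when the norms may be infinite, or when $u^2$ is not locally finite near $\partial\mathbb D$. I would handle this by truncating $u^{-1}$ and restricting to subsets of $Q$ where $u$ is bounded above and below, then passing to the limit by monotone convergence. The Lorentz duality constant depends only on $p$, so it is harmless.
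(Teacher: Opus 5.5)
Your strategy is the paper's: bound $M^u$ below by the single-scale operator $A_Q^u$ and test on the H\"older extremizer for $\int_Q f\,u^{-1}\,u^2\,dA$ (Lemma \ref{l:single-scale}), with the $L^{p',1}\to L^{p'}$ hypothesis handled by Lorentz duality. Your $p=1$ argument is correct. Your dual-case argument is also correct. There you apply $(L^{p',1})^*\simeq L^{p,\infty}$ directly to the functional $g\mapsto\int_Q g\,u^{-1}\,u^2\,dA$, whereas the paper goes through the self-adjointness of $A_Q^u$ on $L^2(\mathbb D,u^2)$; this is the same computation.

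There is, however, an exponent error in your $p>1$ test function, and it breaks the computation as written.
\begin{itemize}
\item $u^{p'-2}\mathbf 1_Q$ is not $(u^{-1})^{p'-1}u^{-1}\mathbf 1_Q=u^{-p'}\mathbf 1_Q$.
\item With $f=u^{p'-2}\mathbf 1_Q$ you get $uf=u^{p'-1}\mathbf 1_Q$, not $u^{-p'}u^2\mathbf 1_Q=u^{2-p'}\mathbf 1_Q$.
\item $(p'-2)p+2=p'-p+2$, which differs from $2-p'$ unless $p=3$.
\end{itemize}
So for $p\neq 3$ your choice actually produces $\frac{1}{|Q|}\int_Q u^{p'-1}\,dA\,\norm{u^{-1}}_{L^{p,\infty}(Q,u^2)}\lesssim\bigl(\int_Q u^{p'-p+2}\,dA\bigr)^{1/p}$, which is not \eqref{e:Bp-nec-sym}.

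The correct extremizer is $f=u^{1-p'}\mathbf 1_Q=u^{-p'/p}\mathbf 1_Q$, as in the paper. Then $uf=u^{-p'}u^2\mathbf 1_Q$, and since $pp'=p+p'$ one has $(1-p')p+2=2-p'$. Both of your displayed values, $\avg{uf}_Q=\norm{u^{-1}}_{L^{p'}(Q,u^2)}^{p'}/|Q|$ and $\norm{f}_{L^p(\mathbb D,u^2)}=\norm{u^{-1}}_{L^{p'}(Q,u^2)}^{p'/p}$, then become true. The rest of your argument goes through unchanged, including the division using $p'-p'/p=1$ and the truncation to $\{N^{-1}\le u\le N\}\cap Q$ followed by monotone convergence.
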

This result will follow from the lower bound $M \ge A_Q$ and the following straightforward characterization of the operator norms of single-scale averaging operators. Below, for two sets $E,F \subset \mathbb D$, we let $A_{E,F} := \avg{f}_E \mathbf 1_F$.
\begin{lemma}\label{l:single-scale}
 If $u$ is a weight and $1 \le p < \infty$, then 
    \[ \norm{ A_{E,F}^u \colon L^{p}(\mathbb{D},u^2) \to L^{p,\infty}(\mathbb{D},u^2) } = \frac{\norm{ u^{-1} }_{L^{p,\infty}(F,u^2)} \norm{u^{-1}}_{L^{p'}(E,u^2)}}{|E|}\]
    for any $E,F \subset \mathbb{D}$.
\end{lemma}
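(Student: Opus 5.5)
The operator $A_{E,F}^u f = u^{-1}\avg{uf}_E\mathbf 1_F$ has rank one, so I will compute its norm by splitting it as a bounded linear functional followed by the weak-type norm of one fixed function. Write
\[ A_{E,F}^u f = \Lambda(f)\, u^{-1}\mathbf 1_F, \qquad \Lambda(f) := \frac{1}{|E|}\int_E f\,u\,dA = \frac{1}{|E|}\int_E f\, u^{-1}\, u^2\,dA. \]
Since the quasi-norm on $L^{p,\infty}$ is homogeneous, $\|A^u_{E,F}f\|_{L^{p,\infty}(\mathbb D,u^2)} = |\Lambda(f)|\,\|u^{-1}\mathbf 1_F\|_{L^{p,\infty}(\mathbb D,u^2)}$. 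Moreover $\|u^{-1}\mathbf 1_F\|_{L^{p,\infty}(\mathbb D,u^2)} = \|u^{-1}\|_{L^{p,\infty}(F,u^2)}$, because the $u^2$-measure of the level sets is only taken over $F$. The operator norm therefore factors as
\[ \|u^{-1}\|_{L^{p,\infty}(F,u^2)} \cdot \sup_{\|f\|_{L^p(u^2)}\le 1} |\Lambda(f)|. \]

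To compute the second factor, I use the fact that $\Lambda$ is integration against $g = |E|^{-1} u^{-1}\mathbf 1_E$ in the measure space $(\mathbb D, u^2\,dA)$. By $L^p$–$L^{p'}$ duality in that space, its norm is $\|g\|_{L^{p'}(\mathbb D,u^2)} = |E|^{-1}\|u^{-1}\|_{L^{p'}(E,u^2)}$. For $1<p<\infty$, the supremum is attained (or approached) by the standard extremizer $f = (u^{-1}\mathbf 1_E)^{p'-1}$, truncated to a set where $u^{-1}$ is bounded if the norm is infinite. For $p=1$ the dual is $L^\infty(u^2\,dA)$, which coincides with $L^\infty(E)$ with respect to Lebesgue measure on the set where $u>0$; this matches the convention $\|u^{-1}\|_{L^{p'}(E,u^2)} = \|u^{-1}\|_{L^\infty(E)}$. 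Multiplying the two factors gives the claimed formula.

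The only delicate points are technical. First, when $p=1$ the $L^1$–$L^\infty$ duality supremum need not be attained, so I take $f$ to be a normalized indicator of a subset of $E$ where $u^{-1}$ is close to its essential supremum. Second, the identity should be read in $[0,\infty]$: if either norm is infinite, the extremizing and truncation arguments show the operator norm is infinite as well. Third, the set $\{u=0\}$ must be handled, since $u^{-1}$ is infinite there; this set is $u^2$-null, so it causes no difficulty once the supremum for $p=1$ is interpreted appropriately. I expect the $p=1$ supremum to be the main point requiring care; everything else is exact homogeneity and duality.
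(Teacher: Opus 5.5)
Your proposal is correct and follows the paper's argument. The paper also factors $A^u_{E,F}f = \Lambda(f)\,u^{-1}\mathbf 1_F$, which reduces the norm to the sharpness of H\"older's inequality in $L^p(u^2)$, and it uses the same extremizer $\mathbf 1_E u^{-p'/p}$ for $p>1$ and normalized approximating functions for $p=1$. Your extra care about truncation when a norm is infinite, and about the set $\{u=0\}$, goes slightly beyond what the paper records but does not change the argument.
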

\begin{proof}[Proof of Lemma \ref{l:single-scale}]
    Apply H\"older's inequality with the underlying measure $u^2$ to
        \[ \int_E fu \, dA = \int_E f u^{-1} u^2 \, dA \le \norm{f}_{L^{p}(E,u^2)} \norm{u^{-1}}_{L^{p'}(E,u^2)}.\]
    But H\"older's inequality is sharp for a single scale. Indeed, fix $f= \mathbf 1_E u^{-p'/p}$ when $p>1$. And when $p=1$, choose a sequence of $L^1$-normalized $g_n$ such that $\int u^{-1}g_n \, dA \to \sup_E u^{-1}$, and set $f_n = g_n u^{-2}$.
\end{proof}

\begin{proof}[Proof of Proposition \ref{prop:nec}]

Lemma \ref{l:single-scale} gives
    \[ \norm{ A_Q^u \colon L^{p}(\mathbb{D},u^2) \to L^{p,\infty}(\mathbb{D},u^2) } = \frac{\norm{ u^{-1} }_{L^{p,\infty}(Q,u^2)}\norm{u^{-1}}_{L^{p'}(Q,u^2)}}{|Q|}. \] 
    The first statement follows since the Bergman maximal operator $M$ dominates each $A_Q$ pointwise. For the second statement, we repeat the same argument with the additional observation that $A_Q^u$ is self-adjoint on $L^2(\mathbb D,u^2)$, and hence
        \[ \norm{ A_Q^u \colon L^{p}(\mathbb{D},u^2) \to L^{p,\infty}(\mathbb{D},u^2) } = \norm{ A_Q^u \colon L^{p',1}(\mathbb{D},u^2) \to L^{p'}(\mathbb{D},u^2) }.\]
    The result follows.
\end{proof}

 For $k\geq 1$ and a Carleson box $Q=Q(\theta,\ell)$  we define $T_Q^k$ by to be an interior box-like region above $Q$ and sufficiently far away, i.e., 
$$ 
    T_Q^k := \left\{re^{i\phi} \colon \tfrac{k}{2}\ell \leq 1-r < \tfrac{k+1}{2}\ell, \,\, |\theta-\phi| \le \tfrac{\ell}{2}\right\}.
$$
Note that $T_Q^1$ equals $T_Q$, the ``top-half" of $Q$. 
We establish the following off-diagonal
pointwise lower bound relating $\Pi$ and $A_Q$. 

\begin{lemma}\label{l:avg-below}
If $k \ge 20$, then
\[k^2 |\Pi (\mathbf 1_Qf)| \mathbf 1_{T_Q^k} \gtrsim \avg{f}_Q \mathbf 1_{T_Q^k}\]
and 
\[ k^2 |\Pi (\mathbf 1_{T_Q^k}f)|\mathbf  1_{Q} \gtrsim \avg{f}_{T_Q^k} \mathbf 1_Q\]
for any Carleson box $Q$ with length $\ell(Q) \le \frac{1}{k}$ and any nonnegative function $f$ on $\mathbb{D}$. \looseness=-1
\end{lemma}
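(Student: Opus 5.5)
The plan is a direct kernel estimate: for $z\in T_Q^k$ and $w\in Q$ (or the reverse), the Bergman kernel $K(z,w)=\pi^{-1}(1-z\overline w)^{-2}$ has modulus comparable to $(k\ell)^{-2}$ and argument confined to a fixed sector strictly inside $(-\pi/2,\pi/2)$. Once this holds, taking real parts makes the integral defining $\Pi$ non-cancelling.

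First, rotate so that $\theta=0$; this is allowed because $|K|$ and the sector condition are rotation invariant. Write $z=re^{i\alpha}$ and $w=se^{i\beta}$. For either pairing ($z\in T_Q^k$, $w\in Q$ or the reverse) the angles satisfy $|\alpha-\beta|\le c_0\ell$, where $c_0$ is an absolute constant. It is of size $2\pi$ if one reconciles the $e^{2\pi i\phi}$ and $e^{i\phi}$ normalisations used in the definitions of $Q$ and $T_Q^k$. Decompose
\[ 1-z\overline w=(1-rs)+rs\bigl(1-e^{i(\alpha-\beta)}\bigr). \]
For the first term, $1-rs\ge \max(1-r,1-s)\ge \tfrac k2\ell$ and $1-rs\le(1-r)+(1-s)\le \tfrac{k+3}{2}\ell$. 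The hypothesis $\ell\le 1/k$ guarantees that $T_Q^k$ really lies in $\mathbb D$. The second term has modulus at most $|\alpha-\beta|\le c_0\ell$.

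Since $k\ge 20$, the ratio of the perturbation to the positive real part is at most $c_0\ell/(\tfrac k2 \ell)=2c_0/k\le c_0/10<1$. With $c_0=2\pi$ this is about $0.63$. Hence $\arg(1-z\overline w)\in[-\gamma,\gamma]$ with $\gamma=\arcsin(c_0/10)$, and therefore $\arg (1-z\overline w)^{-2}\in[-2\gamma,2\gamma]$ with $2\gamma<\pi/2$. Checking that this numerical margin works with the paper's normalisation is the step I expect to be the only delicate point; the threshold $20$ seems chosen exactly for it. We also get $|1-z\overline w|\le \tfrac{k+3}{2}\ell+c_0\ell\lesssim k\ell$. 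Consequently
\[ \operatorname{Re} K(z,w)\ge \cos(2\gamma)\,|K(z,w)|\gtrsim \frac{1}{k^2\ell^2}. \]

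Finally, for nonnegative $f$ and $z\in T_Q^k$,
\[ |\Pi(\mathbf 1_Q f)(z)|\ge \operatorname{Re}\int_Q K(z,w)f(w)\,dA(w)\gtrsim \frac{1}{k^2\ell^2}\int_Q f\,dA \sim \frac{1}{k^2}\avg{f}_Q, \]
using $|Q|\sim\ell^2$, which holds since $\ell\le 1/20$. The second estimate follows identically, since $K(w,z)=\overline{K(z,w)}$ has the same real part. Exchanging the roles of $Q$ and $T_Q^k$ and using $|T_Q^k|\sim \ell\cdot\ell=\ell^2$ (as $T_Q^k$ has radial width $\ell/2$ and angular width $\sim\ell$) gives $|\Pi(\mathbf 1_{T_Q^k}f)(w)|\gtrsim k^{-2}\avg{f}_{T_Q^k}$ for $w\in Q$.
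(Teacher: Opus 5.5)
Your proof is correct, but it takes a different route from the paper's. The paper freezes a point $w_0\in Q$ and uses the size bound $|1-z\overline w|\sim k\ell$ together with a Lipschitz estimate $|K(z,w)-K(z,w_0)|\lesssim k^{-3}\ell^{-2}$. It then absorbs this error into the main term $|K(z,w_0)|\int_Q f\,dA\gtrsim k^{-2}\ell^{-2}\int_Q f\,dA$ once $k$ is large. This is the perturbation argument the paper attributes to Bekoll\'e and Coifman--Fefferman.

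You instead rule out cancellation directly. Since $1-z\overline w$ lies in the disc of radius $2\pi\ell$ about the positive number $1-rs\ge\frac k2\ell$, you get $|\arg(1-z\overline w)|\le\gamma$ with $\sin\gamma\le 4\pi/k\le\pi/5$. Hence $\cos 2\gamma=1-2\sin^2\gamma\ge 1-2\pi^2/25>0$, so $\operatorname{Re}K(z,w)\gtrsim(k\ell)^{-2}$ uniformly, and nonnegativity of $f$ finishes the argument.

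The paper's route needs only size and smoothness bounds on the kernel, so it transfers verbatim to any kernel with such bounds. However, its threshold on $k$ rests on a ``routine computation'' whose constants are not tracked. For instance, its stated bound $|1-z\overline w|\le\frac{k+3}{2}\ell$ omits the angular separation.

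Your route uses the explicit form $(1-z\overline w)^{-2}$: squaring only doubles the argument. It needs no derivative estimate, and it gives the second inequality for free since $\operatorname{Re}K(w,z)=\operatorname{Re}K(z,w)$. It also confirms that $k\ge 20$ suffices under the $2\pi\ell$ angular normalization you correctly adopted for $T_Q^k$.

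One detail is worth stating explicitly. The claim $|T_Q^k|\sim\ell^2$ needs the radii in $T_Q^k$ bounded below, namely $r>1-\frac{k+1}{2}\ell\ge\frac{k-1}{2k}$. This is where the hypothesis $\ell\le 1/k$ is used.
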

\begin{proof}
We follow the ideas of \cites{Bek82,CF74}, which simply rely on the smoothness of the kernel $K(z,w) := K_{\mathbb{D}}(z,w) = \frac{1}{(1-z\overline{w})^2}$. The main observation is that if $z \in T_Q^k$ and $w \in Q$, then by the triangle inequality 
	\[ \frac{k+3}{2} \ell(Q) \ge |1- z \overline{w}| \ge 
	\frac{k-2}{2} \ell(Q).\] 
If furthermore $w_0 \in Q$, then a routine computation shows that 
	\[ \left\vert \frac{1}{(1- z \overline{w})^2} - \frac{1}{(1- z \overline{w}_0)^2} \right\vert \le
	\frac{16}{(k-2)^3 \ell(Q)^2}.\]
Therefore, for $z \in T_Q^k$, we have that 
	\[ \abs{\Pi (\mathbf 1_Q f)(z)} \ge \frac{1}{|1- z \overline{w}_0|^2} \int_Q f \,dA - \frac{16}{(k-2)^3 \ell(Q)^2} \int_Q f\,dA, \]
which is bounded below by $\avg{f}_Q$ for $k$ large enough; namely if $\frac{16}{(k-2)^3} \le \frac{2}{(k+3)^2}$. The second inequality follows by symmetry. 
\end{proof}

To prove that \eqref{e:Bp-nec-sym} is necessary for the mixed-weighted weak-type bound of $\Pi$, 
we must use another special property of our weights.  
\begin{definition}\label{d:BHO}
We say a weight $u$ has bounded hyperbolic oscillation (BHO) if it is approximately constant on the top halves of Carleson boxes, that is, 
there exists $C>0$ such that
 	\begin{equation}\label{e:hyp} \sup_{z \in T_Q} u(z) \le C \inf_{w \in T_Q} u(w)\end{equation}
for all Carleson boxes $Q$. Let us denote by $[u]_{\BHO}$ the infimum over all $C$ such that \eqref{e:hyp} holds.
\end{definition}
\noindent It is well-known that for any conformal map $\psi \colon \mathbb D \to \mathbb C$, $v=|\psi'|$ belongs to BHO by the Koebe distortion theorem \cite{Pom92}*{Cor. 1.1.5}; see e.g. \cite{GW24}*{Prop. 2.2}.

We now prove that \eqref{e:Bp-nec-sym} 
is necessary for $\Pi^u\colon L^p(\mathbb{D},u^2)\to L^{p,\infty}(\mathbb{D},u^2)$ whenever $u \in \text{BHO}$. Theorem \ref{thm:Nec} then follows as an immediate application.
\begin{theorem}\label{prop:GeneralNecessity}
If $1 \le p < \infty$, $u \in \BHO$, and $\Pi^u$ is bounded from $L^{p}(\mathbb{D},u^2)$ to $L^{p,\infty}(\mathbb{D},u^2)$, then $u$ satisfies \eqref{e:Bp-nec-sym}.
\end{theorem}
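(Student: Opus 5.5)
The plan is to reduce to the single-scale averaging operators and apply Lemma \ref{l:single-scale}, with Lemma \ref{l:avg-below} supplying an off-diagonal lower bound and BHO letting us move between $Q$ and the far region $T_Q^k$. Fix $k=20$. Large boxes, say $\ell(Q)>1/k$, are handled by covering $Q$ with boundedly many boxes of length at most $1/k$. Near the origin these are all comparable to $\mathbb{D}$ itself, so the quantity in \eqref{e:Bp-nec-sym} is comparable to the corresponding quantities for the small boxes, using the quasi-triangle inequality of $L^{p,\infty}$. So assume $\ell(Q)\le 1/k$.

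\textbf{Step 1.} Lemma \ref{l:avg-below} gives, for nonnegative $f$,
\[\abs{\Pi^u(\mathbf 1_Q f)}\mathbf 1_{T_Q^k}\gtrsim u^{-1}\avg{uf}_Q\mathbf 1_{T_Q^k}=A^u_{Q,T_Q^k}f.\]
Hence $A^u_{Q,T_Q^k}$ is bounded from $L^p(u^2)$ to $L^{p,\infty}(u^2)$ with norm at most a constant times that of $\Pi^u$. By Lemma \ref{l:single-scale} this yields
\begin{equation*}
\norm{u^{-1}}_{L^{p,\infty}(T_Q^k,u^2)}\norm{u^{-1}}_{L^{p'}(Q,u^2)}\lesssim |Q|.
\end{equation*}
Symmetrically, the second inequality of Lemma \ref{l:avg-below} gives
\begin{equation*}
\norm{u^{-1}}_{L^{p,\infty}(Q,u^2)}\norm{u^{-1}}_{L^{p'}(T_Q^k,u^2)}\lesssim |T_Q^k|\sim |Q|.
\end{equation*}

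\textbf{Step 2.} Use BHO to replace the far region by $Q$. The set $T_Q^k$ is covered by boundedly many top halves of Carleson boxes, all of size $\sim\ell(Q)$ and adjacent to one another, and $T_Q$ is one of them. Chaining \eqref{e:hyp} along this bounded chain gives $u\sim c_Q$ on $T_Q^k\cup T_Q$ for some constant $c_Q$. Consequently
\[\norm{u^{-1}}_{L^{p'}(T_Q^k,u^2)}\sim c_Q^{-1}(c_Q^2|Q|)^{1/p'}\sim\norm{u^{-1}}_{L^{p'}(T_Q,u^2)}\le\norm{u^{-1}}_{L^{p'}(Q,u^2)},\]
and the same holds for the $L^{p,\infty}$ norms. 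This comparison alone goes the wrong direction: we need lower bounds for the $T_Q^k$ norms in terms of the full-$Q$ norms, not the $T_Q$ norms.

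\textbf{Step 3.} This is the main obstacle. Combine the two inequalities of Step 1 by multiplying them, to get
\[\bigl[\norm{u^{-1}}_{L^{p,\infty}(Q,u^2)}\norm{u^{-1}}_{L^{p'}(Q,u^2)}\bigr]\cdot\bigl[\norm{u^{-1}}_{L^{p,\infty}(T_Q^k,u^2)}\norm{u^{-1}}_{L^{p'}(T_Q^k,u^2)}\bigr]\lesssim|Q|^2.\]
By the constancy of $u$ on $T_Q^k$, the second bracket is
\[\sim c_Q^{-2}(c_Q^2|Q|)^{1/p+1/p'}=|Q|.\]
Dividing through gives \eqref{e:Bp-nec-sym} for $Q$. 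This product trick removes the need for the wrong-direction comparison in Step 2, so the key point to verify carefully is the uniform comparability $u\sim c_Q$ on $T_Q^k$ from BHO. For $p=1$ one must also check that $L^{p'}$ is read as $L^\infty$ and that $\norm{u^{-1}}_{L^\infty(T_Q^k)}\norm{u^{-1}}_{L^{1,\infty}(T_Q^k,u^2)}\sim c_Q^{-1}\cdot c_Q|Q|$, which holds.
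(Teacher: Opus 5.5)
Your argument is correct and is essentially the paper's. Both use the two off-diagonal bounds from Lemma \ref{l:avg-below} (your inequality for $A^u_{T_Q^k,Q}$ is the single-scale form of the paper's test on $u^{-1}\mathbf 1_{T_Q^k}$), and both close them using the approximate constancy of $u$ on $T_Q^k$ given by BHO. Your product trick is just an algebraic repackaging of the paper's step $\|u^{-1}\|_{L^p(T_Q^k,u^2)}\lesssim\|u^{-1}\|_{L^{p,\infty}(T_Q^k,u^2)}$.

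The large-box case is stated loosely. Boxes of length at most $1/k$ do not reach the part of $Q$ near the origin. Also, the quasi-triangle inequality produces cross terms, so you need each factor to be finite separately, not just the small-box products. Your Step 1 bounds, together with BHO on finitely many top halves, give exactly this, which is how the paper does it by bounding everything by the finite quantities over $\mathbb{D}$.
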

\begin{proof}
Applying the lower bounds from Lemma \ref{l:avg-below}, we obtain the necessary conditions
	\begin{equation} \norm{u^{-1}}_{L^{p,\infty}(Q,u^2)} \lesssim \norm{u^{-1}}_{L^{p}(T_Q^k,u^2)}, \label{eq:Necess1}  \end{equation}
	\begin{equation} \frac{\norm{u^{-1}}_{L^{p,\infty}(T_Q^k,u^2)} \norm{u^{-1}}_{L^{p'}(Q,v^2)}}{|Q|}\lesssim 1,\label{eq:Necess2}
   \end{equation}
whenever $\ell(Q) \le \frac{1}{20}$. Indeed, \eqref{eq:Necess1} comes from testing on $u^{-1} \mathbf 1_{T_Q^k}$, and \eqref{eq:Necess2} from Lemma \ref{l:single-scale}. It remains to show
	\[ \norm{u^{-1}}_{L^{p}(T_Q^k,u^2)} \lesssim \norm{u^{-1}}_{L^{p,\infty}(T_Q^k,u^2)}.\] 
But this is an immediate consequence of the BHO property, 
since $T_Q^k$ is contained in $T_P$ for some (larger) cube $P$.

To handle the case $\ell(Q) \ge \frac{1}{20}$, we simply note that both quantities
	\[ \norm{u^{-1}}_{L^{p,\infty}(\mathbb D, u^2)}, \quad \norm{u^{-1}}_{L^{p'}(\mathbb D,u^2)} \]
must be finite. Indeed, $\mathbb D$ can be written as a finite union of cubes $P$ with $\ell(P) \le \frac 1{10}$ together with a finite union of top-halves $T_{P'}$. But then, for $\ell(Q) \ge \frac 1{20}$, 
	\[ \norm{u^{-1}}_{L^{p,\infty}(Q,u^2)}\norm{u^{-1}}_{L^{p'}(Q,u^2)} \le \norm{u^{-1}}_{L^{p,\infty}(\mathbb D, u^2)} \norm{u^{-1}}_{L^{p'}(\mathbb D,u^2)} \lesssim |Q|.\]
This concludes the proof.
\end{proof}

\begin{proof}[Proof of Theorem \ref{thm:Nec}]
The result follows from Theorem \ref{prop:GeneralNecessity} by the change of variables in Proposition \ref{p:change}
and the fact that $|\psi'| \in \BHO$. 
\end{proof}

\section{Reduction to the Bergman maximal operator}\label{s:Reduction}

In this section, we will reduce our sought-after estimates for $\Piv$ to certain ones for $\Mv$. We first prove the following Coifman-Fefferman inequality, relying on the pointwise bound \eqref{e:sparse} which, as discussed there, also holds for $\Pi^+$.

\begin{proposition}\label{p:CF}
If $0<p<\infty$ and $u \in B_\infty$, then 
\[
    \int_{\mathbb{D}} |\Pi^+ f|^p u \, dA \lesssim \int_{\mathbb{D}} |Mf|^p u \, dA
\]
for all $f$ for which the right-hand side is finite.
\end{proposition}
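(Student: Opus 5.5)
Because of \eqref{e:sparse}, it suffices to prove
\[
\int_{\mathbb D}\Big(\sum_{Q\in\mathcal D}A_Q|f|\Big)^p u\,dA\lesssim\int_{\mathbb D}(Mf)^p u\,dA
\]
for a single dyadic grid $\mathcal D\in\{\mathcal D_1,\mathcal D_2\}$. We may take $f\ge0$. We will also assume, by monotone convergence, that $f$ is bounded and supported away from the boundary, so that all quantities are finite; this point is checked at the end. Write $S f=\sum_{Q\in\mathcal D}A_Qf$.

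The plan is the classical good-$\lambda$ argument of Coifman--Fefferman, adapted to Carleson boxes. The key inequality to prove is the following: there exist $c>0$ and $C<\infty$ such that for every $\gamma\in(0,1)$ and $\lambda>0$,
\[
u(\{Sf>2\lambda,\ M^{\mathcal D}f\le\gamma\lambda\})\le C\gamma^{\delta}\,u(\{Sf>\lambda\}),
\]
where $\delta$ is the exponent from the $B_\infty$ condition \eqref{e:Binfty}.

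To prove it, we first decompose the level set. Let $\Omega_\lambda=\{Sf>\lambda\}$. Since $A_Qf$ is constant on $Q$, the dyadic structure gives the following: for each $z\in\Omega_\lambda$, either $z$ lies in a top half on which the value of $Sf$ is large, or the partial sum over boxes containing $z$ already exceeds $\lambda$. In the second case we can choose the maximal boxes $\{Q_j\}$ with $\sum_{P\supseteq Q_j}\avg{f}_P>\lambda$. This requires a truncation at large scales, which is harmless because $\mathbb D$ itself is the top box and $\avg{f}_{\mathbb D}\le M^{\mathcal D}f$.

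Cleaner still is to work directly with stopping boxes. For each maximal $Q_j$, the sum over strictly larger boxes is $\le\lambda$. On $Q_j\cap\{M^{\mathcal D}f\le\gamma\lambda\}$ we then have $Sf\le \lambda+\gamma\lambda+\sum_{P\subsetneq Q_j}A_P f$; here the box $Q_j$ itself contributes at most $M^{\mathcal D}f\le\gamma\lambda$. So it suffices to show
\[
\big|\{z\in Q_j:\textstyle\sum_{P\subseteq Q_j}A_P f>\lambda/2\}\cap\{M^{\mathcal D}f\le\gamma\lambda\}\big|\lesssim\gamma|Q_j|.
\]
For this, use the localized weak $(1,1)$ bound for the dyadic sparse operator, namely $\sum_{P\subseteq Q}A_P$ restricted to $Q$ applied to $f\mathbf 1_Q$. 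On the set in question $\avg{f}_{Q_j}\le\gamma\lambda$ (if the set is nonempty), so Chebyshev gives $|\cdot|\lesssim\avg{f}_{Q_j}|Q_j|/\lambda\lesssim\gamma|Q_j|$. The $L^1$-boundedness of the sum over $P\subseteq Q_j$ needs care, because $\sum_P A_P$ is \emph{not} $L^1$-bounded in general. In the Bergman dyadic setting, however, $\sum_{P\subseteq Q}|P|\avg{f}_P\mathbf 1_P$ integrates to $\sum_P\int_P f$. This is where the top-half partition (i)--(ii) enters: $\int_{\mathbb D}\sum_{P\subseteq Q}A_Pf=\sum_{P\subseteq Q}\int_P f=\int_Q f\cdot\#\{P\ni w\}$, which diverges logarithmically. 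I therefore expect this to be the main obstacle. To get around it, I would first replace $f$ by its ``top-half'' part. That is, I would use that $Sf\lesssim \sum_Q \avg{M^{\mathcal D}f\,\mathbf 1_{T_Q}}_{Q}$ up to bounded overlap, or use the Carleson-embedding/sparse bound $\sum_Q A_Q f\lesssim \sum_Q\avg{f}_Q\mathbf 1_{Q}$ with the geometric decay of $|T_Q\cap Q'|/|Q|$. If that fails, I would fall back on a sharp-function route: show the pointwise estimate $M^{\#}_{\delta}(Sf)\lesssim M^{\mathcal D}f$ for small $\delta$ via Kolmogorov's inequality, which only needs the weak-$(1,1)$ of $S$ on boxes, and then apply a Fefferman--Stein inequality for $B_\infty$ weights.

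Granting the good-$\lambda$ inequality, we finish in the standard way. Multiply by $p\lambda^{p-1}$ and integrate in $\lambda$ to get
\[
2^{-p}\|Sf\|_{L^p(u)}^p\le C\gamma^\delta\|Sf\|_{L^p(u)}^p+\gamma^{-p}\|M^{\mathcal D}f\|_{L^p(u)}^p.
\]
Choosing $\gamma$ small lets us absorb the first term on the right, provided $\|Sf\|_{L^p(u)}<\infty$. That finiteness is exactly where the reduction to truncated sums (finitely many scales, so $Sf\lesssim_N M^{\mathcal D}f$) is used, followed by monotone convergence in the number of scales.
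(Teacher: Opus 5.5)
Your good-$\lambda$ set-up is fine as far as it goes: the stopping boxes $Q_j$, the splitting $Sf\le\lambda+\gamma\lambda+\sum_{P\subsetneq Q_j}A_Pf$ on $Q_j\cap\{M^{\mathcal D}f\le\gamma\lambda\}$, and the truncation to finitely many scales all work. The proof nevertheless has a genuine gap, at exactly the step you flag. The local estimate $|\{z\in Q_j:\sum_{P\subseteq Q_j}A_Pf(z)>\lambda/2\}\cap\{M^{\mathcal D}f\le\gamma\lambda\}|\lesssim\gamma|Q_j|$ is never established. You invoke a weak-type $(1,1)$ bound for $S$, but then argue it through Chebyshev and an $L^1$ bound, which, as you note yourself, is false. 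The workarounds you list are not carried out, and the sharp-function route needs that same weak-type bound plus a Fefferman--Stein inequality. That inequality is not standard here, because the Bergman maximal function does not dominate $|g|$.

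There is also a second, independent problem. Passing from $|E_j|\lesssim\gamma|Q_j|$ to $u(E_j)\lesssim\gamma^\delta u(Q_j)$ requires $u(E)/u(Q)\lesssim(|E|/|Q|)^\delta$. This is the \emph{reverse} of \eqref{e:Binfty}, and in the Bergman setting the two are not equivalent. For example, take $u=1+\sum_{Q\in\mathcal D}\phi_Q$, where $\phi_Q\ge 0$ has integral $|T_Q|$ and is supported on a subset of $T_Q$ of vanishingly small relative measure. Then $u\ge 1$ and $u(P)=2|P|$ for dyadic $P$, so \eqref{e:Binfty} holds with $\delta=1$, yet the reverse inequality fails on the supports of the $\phi_Q$. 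This can be repaired, since your sets $E_j$ are unions of dyadic top halves, but it needs a separate argument, and the resulting exponent is not the $\delta$ of \eqref{e:Binfty}.

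The idea you are missing is that in the Bergman dyadic model the family $\mathcal D$ is already sparse. The top halves $T_Q$ are pairwise disjoint with $|T_Q|\sim|Q|$, and \eqref{e:Binfty} applied to $E=T_Q$ gives $u(Q)\lesssim u(T_Q)$. That is all the paper uses:
\begin{itemize}
\item For $0<p\le 1$, subadditivity of $t\mapsto t^p$ gives $\int (Sf)^p u\le\sum_Q\avg{f}_Q^p u(Q)\lesssim\sum_Q\avg{f}_Q^p u(T_Q)\le\int (Mf)^p u$.
\item For $p>1$, duality gives $\int A_Qf\,g\,u\lesssim\avg{f}_Q\avg{g}_{u,Q}u(T_Q)\le\int_{T_Q}Mf\,M_ug\,u$, and the dyadic $M_u$ is bounded on $L^{p'}(u)$ for every weight.
\end{itemize}
No weak-type bound, no good-$\lambda$ inequality, and no reverse $A_\infty$ property is needed.

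The same observation would also rescue your own argument. On $E_j$ every term $\avg{f}_P$ with $z\in P\subsetneq Q_j$ is at most $\gamma\lambda$, so $z$ must lie in a box at depth at least $1/(2\gamma)$ below $Q_j$. Let $\beta$ be such that $u(T_P)\ge\beta u(P)$ for all $P$. Iterating $u(P\setminus T_P)\le(1-\beta)u(P)$ level by level bounds the $u$-measure of that region by $(1-\beta)^{1/(2\gamma)}u(Q_j)$.
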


\begin{proof}[Proof of Proposition \ref{p:CF}]
Let $f$ be a nonnegative and bounded function with compact support. First, in the case $0<p\leq 1$, we have by H\"older's inequality that
	\[ \int_{\mathbb D} \Big|\sum_{Q \in \mathcal D} A_Q f\Big|^p u \, dA \le \sum_Q \avg{f}_Q^p u(Q). \]
Using the inequality $u(Q) \lesssim u(T_Q)$ by virtue of $u \in B_\infty$, we have
	\[ \avg{f}_Q^p u(Q) \lesssim \avg{f}_Q^p u(T_Q) \le \int_{T_Q} \abs{Mf}^p u \, dA.\]
Summing the above display over $Q$ achieves the result. 
For the case $p>1$, we use duality. For $g \in L^{p'}(\mathbb{D},u)$, again appealing to the $B_{\infty}$ condition, we see
\[
\Big|\int_{\mathbb D} A_Q fg u \, dA \Big| \lesssim \avg{f}_Q \avg{g}_{u,Q} u(T_Q) \le \int_{T_Q} Mf M_ug u\, dA,
\]
where 
	\[ M_u g := \sup_{Q \in \mathcal D} \avg{|g|}_{u,Q} \mathbf 1_Q. \] 
It is well-known that the dyadic structure entails that $M_u$ is bounded on $L^p(\mathbb{D},u)$ independently of $u$ for each $1< p < \infty$ \cite{LN19}*{Theorem 15.1}. Summing over $Q \in \mathcal D$ concludes the proof.\looseness=-1
\end{proof}

We will also use the following reverse H\"older property of $\BHO$ weights satisfying the $B_{p}$ condition.
\begin{lemma}\label{l:RH}
Given $K \ge 1$ and $1 \le p < \infty$, there exist $C,\tau > 1$ such that 
 	\[ \avg{u}_{\tau,Q} \le C \avg{u}_Q \]
for all $u \in B_p\cap\BHO$ with 
 	$\max\{ [u]_{\BHO}, [u]_{B_p} \} \le K$
and all Carleson boxes $Q$.
\end{lemma}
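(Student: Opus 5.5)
We would follow the classical Gehring-type argument, adapted to the dyadic Carleson box structure of Subsection~\ref{ss:dyad}. Two reductions come first. By the approximation property of $\mathcal D_1\cup\mathcal D_2$ (every box $Q$ satisfies $P'\subset Q\subset P$ with comparable sizes) it suffices to prove the estimate for dyadic $Q$. Moreover, since $u\in B_p$ gives the $B_\infty$ bound \eqref{e:Binfty} with constants depending only on $K$, we have $u(P)\sim u(Q)$ for comparable boxes. The key structural fact is the BHO property: on each top half $T_R$ the weight $u$ is essentially constant, with $u\sim \avg{u}_{T_R}$ up to the factor $K$. We will also use $u(T_R)\gtrsim u(R)$, which follows from \eqref{e:Binfty} applied to $E=T_R$ (note $|T_R|\sim|R|$ with $|T_R|/|R|$ bounded below). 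Combining these, $u(z)\sim \avg{u}_{T_R}\gtrsim \avg{u}_R$ for $z\in T_R$.

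Since $\{T_R\colon R\in\mathcal D, R\subset Q\}$ partitions $Q$, we can write
\[ \int_Q u^\tau\,dA=\sum_{R\subset Q}\int_{T_R}u^\tau\,dA\lesssim_K \sum_{R\subset Q}\avg{u}_{T_R}^{\tau}|T_R|. \]
It is therefore enough to control the distribution of $\avg{u}_{T_R}$, equivalently of the dyadic maximal function $M^{\mathcal D}(u\mathbf 1_Q)$, which pointwise dominates $u$ up to $K$ by the BHO property and the bound $u(T_R)\sim u(R)$. We would then run the standard good-$\lambda$/Calder\'on--Zygmund argument. Fix $\lambda>\avg{u}_Q$ and let $\{R_j\}$ be the maximal dyadic subboxes of $Q$ with $\avg{u}_{R_j}>\lambda$. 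Unlike the Euclidean case, the parent $\hat R_j$ need not have a comparable average automatically. The fix is that $u(\hat R_j)\sim u(T_{\hat R_j})$ and $u\sim$ const on $T_{\hat R_j}\cup T_{R_j}$ (these are adjacent top halves, so BHO applies to a slightly larger region). This gives $\avg{u}_{R_j}\lesssim_K \avg{u}_{\hat R_j}\le\lambda$, so $\lambda<\avg{u}_{R_j}\le C_K\lambda$.

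Next we apply $B_\infty$: within each $R_j$, the set $\{M^{\mathcal D}(u\mathbf 1_{R_j})>A\lambda\}$ has $u$-measure at most $C/A$ times $u(R_j)$ by the weak-$(1,1)$ bound for the dyadic maximal function. Converting from $u$-measure to Lebesgue measure via \eqref{e:Binfty} shows its Lebesgue measure is at most $\eta|R_j|$ with $\eta<1$ for $A$ large depending only on $K$. This yields the good-$\lambda$ inequality
\[ \bigl|\{z\in Q\colon M^{\mathcal D}(u\mathbf 1_Q)>A\lambda\}\bigr|\le \eta\,\bigl|\{z\in Q\colon M^{\mathcal D}(u\mathbf 1_Q)>\lambda\}\bigr| \quad\text{for }\lambda>\avg{u}_Q. \]
Iterating gives exponential decay of the distribution function beyond level $\avg{u}_Q$. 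Integrating $\tau\lambda^{\tau-1}$ against this distribution with $\tau-1$ small (so that $A^{\tau-1}\eta<1$) gives $\avg{M^{\mathcal D}(u\mathbf 1_Q)}_{\tau,Q}\lesssim\avg{u}_Q$, and hence the claim. Instead of the maximal function one could also use the sparse-type bound $\int_R u\le C\,u(T_R)$ directly.

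The main obstacle we expect is the stopping-time step: on Carleson boxes the parent control $\avg{u}_{R_j}\lesssim\avg{u}_{\hat R_j}$ fails for general weights, because $R_j$ touches the boundary and $u$ may concentrate there. This is exactly where BHO together with $u(R)\lesssim u(T_R)$ from $B_p$ must be used. It is also the point where we must verify that all constants ($C_K$, $A$, $\eta$) depend only on $K$ and $p$, not on the particular $u$ or $Q$.
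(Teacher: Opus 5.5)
\textbf{There is a genuine gap.} Your overall plan matches what the paper has in mind: a Coifman--Fefferman stopping time on dyadic boxes, with BHO supplying $u\lesssim_K M^{\mathcal D}(u\mathbf 1_Q)$ on $Q$ in place of Lebesgue differentiation. The paper gives no proof; it cites the literature and remarks that the argument of \cite{CF74} goes through once BHO gives $u\lesssim Mu$. However, the one step where the weight hypothesis must enter is assembled in the wrong direction, and as written it proves nothing.

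The weak-$(1,1)$ bound controls Lebesgue measure, not $u$-measure:
\[ \bigl|\{z\in R_j\colon M^{\mathcal D}(u\mathbf 1_{R_j})(z)>A\lambda\}\bigr|\le\frac{u(R_j)}{A\lambda}\le\frac{C}{A}\,|R_j| . \]
Your claim that this set has $u$-measure at most $\frac CA u(R_j)$ is unjustified and false in general. For example, $u(z)=|1-z|^{-a}$ with $1<a<2$ lies in $B_1\cap\BHO$, yet its $u$-proportion decays only like $A^{-(2-a)/a}$.

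More importantly, the Lebesgue-measure good-$\lambda$ inequality you end up with cannot give integrability beyond $L^{1,\infty}$. Integrating $\tau\lambda^{\tau-1}$ against $|\{M^{\mathcal D}(u\mathbf 1_Q)>\lambda\}|$ over the levels $A^k\avg{u}_Q$ requires $A^{\tau}\eta<1$, not $A^{\tau-1}\eta<1$. Your $\eta$ is either about $C/A$ with $C>1$ (directly from the weak-type bound, since $u(R_j)>\lambda|R_j|$) or about $A^{-\delta}$ with $\delta=1/p\le 1$ (via your $B_\infty$ conversion). Neither makes $A^\tau\eta<1$ for any $\tau>1$. The condition $A^{\tau-1}\eta<1$ is the right one only when $\eta$ is a decay ratio for the measure $u\,dA$.

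\textbf{The repair} is to use \eqref{e:Binfty} in the opposite direction, through complements. Set $\Omega_k=\{z\in Q\colon M^{\mathcal D}(u\mathbf 1_Q)(z)>A^k\avg{u}_Q\}=\bigcup_j R^k_j$. The display above gives $|R^k_j\setminus\Omega_{k+1}|\ge(1-\frac CA)|R^k_j|$. Applying \eqref{e:Binfty} to $E=R^k_j\setminus\Omega_{k+1}$ then yields $u(\Omega_{k+1}\cap R^k_j)\le\beta\,u(R^k_j)$ with $\beta<1$ depending only on $K$ and $p$. Hence $u(\Omega_k)\le\beta^k u(Q)$. Using $u\le C_K M^{\mathcal D}(u\mathbf 1_Q)$ on $Q$, integrate $u^{\tau-1}$ against $u\,dA$:
\[ \int_Q u^\tau\,dA\le C_K^{\tau-1}\int_Q M^{\mathcal D}(u\mathbf 1_Q)^{\tau-1}\,u\,dA\lesssim\avg{u}_Q^{\tau-1}u(Q)\Bigl(1+\sum_{k\ge0}A^{(k+1)(\tau-1)}\beta^k\Bigr). \]
This is finite once $A^{\tau-1}\beta<1$.

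\textbf{The obstacle you flagged is not one.} Since $R_j\subset\hat R_j$ and $|\hat R_j|\le C|R_j|$ with an absolute constant, $\avg{u}_{R_j}\le C\avg{u}_{\hat R_j}\le C\lambda$ holds for every nonnegative $u$. This is the ``maximality and doubling'' step in the proof of Theorem~\ref{thm:max-p}. The genuinely Bergman-specific difficulty is that dyadic Carleson boxes do not shrink to interior points, so $u\le\lambda$ off the stopping set does not follow from differentiation. BHO fixes exactly that, and you already have that ingredient.
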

\noindent We refer to any number of works \cites{APR19,SW23,Bor04} for a proof of this property. Furthermore, there has been recent interest in finding suitable weaker conditions than $\BHO$ to guarantee the reverse H\"older property \cites{MP25,Gok25}. For those more familiar with $A_p$ classes, the usual proof of the reverse H\"older property \cite{CF74} goes through unchanged, simply noting that the $\BHO$ condition implies $u \lesssim Mu$ (which may fail for general $B_p$ weights). We note that Lemma \ref{l:RH} remains true when $p=\infty$ in a slightly more delicate form that will not be needed here; see \cite{SW23}. 

\noindent We use Proposition \ref{p:CF} and Lemma \ref{l:RH} to reduce the mixed-weighted weak-type estimate for $\Pi$ to the analogous bound for $M$ in the $p=1$ endpoint case. 

\begin{theorem}\label{thm:Pi-M1}
If $u \in B_1$, then 
	\[ \norm{\Pi^u\colon L^1(\mathbb{D},u^2) \to L^{1,\infty}(\mathbb{D},u^2) } \lesssim \norm{M^u\colon L^1(\mathbb{D},u^2) \to L^{1,\infty}(\mathbb{D},u^2) }. \]
\end{theorem}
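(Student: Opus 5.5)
We will prove Theorem~\ref{thm:Pi-M1} by an extrapolation-type argument in the spirit of the proof of Sawyer-type inequalities for Calder\'on--Zygmund operators \cites{CUMP05,LOP19}. The tools are the Coifman--Fefferman inequality (Proposition~\ref{p:CF}) for a suitably constructed $B_\infty$ weight, together with the Kolmogorov characterization of weak-type norms. By \eqref{e:sparse} we may work throughout with $\Pi^+$, with $f\ge 0$ bounded and compactly supported, and with a single dyadic grid $\mathcal D$.

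\textbf{Step 1 (Kolmogorov).} Fix $0<\delta<1$. For any measure $\mu$ we have
\[
\norm{G}_{L^{1,\infty}(\mu)}\sim \sup_{F}\mu(F)^{1-\frac1\delta}\Big(\int_F |G|^\delta\,d\mu\Big)^{\frac1\delta}.
\]
Apply this with $\mu=u^2\,dA$ and $G=\Pi^+(uf)/u$. It then suffices to show, for every $F$ with $u^2(F)<\infty$, that
\[
\int_F |\Pi^+(uf)|^\delta u^{2-\delta}\,dA\lesssim u^2(F)^{1-\delta}\norm{M^u f}_{L^{1,\infty}(u^2)}^\delta .
\]

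\textbf{Step 2 (a $B_1$ majorant of $\mathbf 1_F u^{2-\delta}$).} Choose $0<\gamma<\delta$, set $s=\delta/\gamma>1$ and $h=\mathbf 1_F u^{s}$, and define
\[
W:=u^{2-\delta}\Big(\frac{M^{\mathcal D} h}{u^{s}}\Big)^{\gamma}=u^{1-\delta}\cdot u\cdot (M^{\mathcal D}h)^\gamma u^{-\delta}=u^{1-\delta}(M^{\mathcal D}h)^{\gamma}\cdot u^{1-\delta}\cdot u^{\delta-1}.
\]
Simplified, $W=u^{2-\delta-\delta}(M^{\mathcal D}h)^\gamma$. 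Since $M^{\mathcal D}h\ge h$ a.e., we have $W\ge \mathbf 1_F u^{2-\delta}$. We need $W\in B_\infty$. The plan is to check this via H\"older in the form $\avg{u^{a}(M^{\mathcal D}h)^\gamma}_Q\le \avg{u}_Q^{a}\avg{(M^{\mathcal D}h)^{\gamma/(1-a)}}_Q^{1-a}$, combined with the dyadic Coifman--Rochberg bound $\avg{(M^{\mathcal D}h)^{\beta}}_Q\lesssim \inf_Q (M^{\mathcal D}h)^\beta$ for $\beta<1$ and with $u\in B_1$. If the exponent of $u$ is too large for this, we instead keep $u^2$ as the reference measure and verify the $B_\infty$ condition \eqref{e:Binfty} directly, using the reverse H\"older inequality (Lemma~\ref{l:RH}). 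We then apply Proposition~\ref{p:CF} with exponent $\delta$ and weight $W$ to obtain
\[
\int_F|\Pi^+(uf)|^\delta u^{2-\delta}\,dA\le\int |\Pi^+(uf)|^\delta W\,dA\lesssim \int (M(uf))^\delta W\,dA=\int (M^uf)^\delta\,\varphi\,u^2\,dA ,
\]
where $\varphi:=(M^{\mathcal D}(\mathbf 1_Fu^s)/u^s)^\gamma=(M^{u^s}\mathbf 1_F)^\gamma$.

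\textbf{Step 3 (return to the weak norm; main obstacle).} Write $G=M^uf$ and $\lambda_0=\norm{G}_{L^{1,\infty}(u^2)}$. We split $\int G^\delta\varphi\,u^2$ by layer cake at a height $t$ to be optimized. On $\{G\le t\}$ we bound the integral using $\norm{\varphi}_{L^{r}(u^2)}$. On $\{G>t\}$ we use $u^2(\{G>\lambda\})\le\lambda_0/\lambda$ together with H\"older in Lorentz spaces; this requires $r>\frac1{1-\delta}$. Both pieces close, giving $\lesssim \lambda_0^\delta u^2(F)^{1-\delta}$, provided that
\[
\norm{\varphi}_{L^{r}(u^2)}=\norm{M^{u^s}\mathbf 1_F}_{L^{r\gamma}(u^2)}^{\gamma}\lesssim u^2(F)^{1/r},
\]
that is, provided $M^{u^s}$ is bounded (at least restricted weak/strong type on indicators) on $L^{q}(u^2)$ with $q=r\gamma$ large. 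This is the step we expect to be the main obstacle. It amounts to a dyadic two-weight estimate $\avg{u^s\mathbf 1_F}_Q^q\,\avg{u^{2-sq}}_Q\lesssim \avg{\mathbf 1_F u^2}_Q$ summed sparsely. We would attack it by H\"older with exponents $q$ and $q'$ on $\avg{\mathbf 1_Fu^s}_Q$, reducing it to $\avg{u^{2-sq}}_Q\avg{u^{(s-2/q)q'}}_Q^{q/q'}\lesssim 1$. With $q$ large and $s$ close to $1$, this is a $B_{q'}$-type condition for powers of $u$. It follows from $u\in B_1$, which gives $u^{-1}\le [u]_{B_1}\avg{u}_Q^{-1}$ on $Q$, together with the reverse H\"older property to absorb the slightly larger powers. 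The weighted boundedness of the dyadic maximal operator $M_{u^2}$ (as in the proof of Proposition~\ref{p:CF}) then handles the summation. The parameters $\delta,\gamma$ (hence $s$) and $r$ must be chosen compatibly at the end.
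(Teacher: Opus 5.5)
\textbf{There is a genuine gap.} The parameter choice you postpone to the end cannot be made under the hypothesis $u\in B_1$ alone. Put $a=2-2\delta$ and $q_0=\gamma/(1-\delta)$, so that $sq_0=\delta/(1-\delta)$ and $(s-1)q_0=(\delta-\gamma)/(1-\delta)$.

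Your Step~2 check (H\"older, $u\in B_1$, Coifman--Rochberg) needs $a<1$ and $\gamma/(1-a)<1$, i.e.\ $\gamma<2\delta-1$. Step~3 needs $M$ bounded on $L^q(u^{2-sq})$ for $q$ near $q_0$. From $u\in B_1$ you get $u^{2-sq}\in B_q$ only when $(sq-2)/(q-1)\le 1$, i.e.\ $(s-1)q\le 1$, which at $q_0$ reads $\gamma\ge 2\delta-1$. The two requirements are complementary.

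The Step~3 constraint is sharp. Take $u=1+\Lambda\mathbf 1_E$ with $E$ a disc inside a top half $T_Q$ and $\Lambda|E|\sim|Q|$. Then $[u]_{B_1}\lesssim 1$ uniformly in $\Lambda$, because every Carleson box meeting $T_Q$ has area $\gtrsim|Q|$. But testing your restricted bound $\int (M(u^s\mathbf 1_E))^q u^{2-sq}\lesssim u^2(E)$ on $Q$ gives the ratio $\Lambda^{(s-1)q-1}$.

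Your fallback, ``the reverse H\"older property'' of $u$, is Lemma~\ref{l:RH}, which needs $u\in\BHO$. That is not a hypothesis of Theorem~\ref{thm:Pi-M1}, and $B_1$ alone is blind to interior spikes like this one. If you add $u\in\BHO$ with reverse H\"older exponent $\tau>1$, the bookkeeping does close:
\begin{itemize}
\item Step~2 needs $\gamma<1-(2-2\delta)/\tau$;
\item Step~3 needs $\gamma>1-\delta+(3\delta-2)/\tau$.
\end{itemize}
This is a window of width $\delta(1-1/\tau)$; take $\delta\in(\frac23,1)$ so that $q_0>1$. That proves the estimate for $B_1\cap\BHO$ weights, which is enough for Theorem~\ref{thm:sawyer}, but not the statement as given.

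The paper's route never applies reverse H\"older to $u$. It dualizes $L^{r,\infty}(u^2)$ against $L^{s,1}(u^2)$ and runs Rubio de Francia with $M^u$ on $L^{s,1}(u^2)$. That boundedness comes from interpolating the $L^\infty$ bound, which is where $u\in B_1$ is used, with the $L^2(u^2)$ bound. It then applies Lemma~\ref{l:RH} to the auxiliary weight $Rh\cdot u$ built from iterates of $M^u$, and uses $u$ itself only through $\avg{u^{1/s}}_{s,Q}=\avg{u}_Q^{1/s}$.

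\textbf{Second issue.} The claim $M^{\mathcal D}h\ge h$ a.e.\ is false for Carleson boxes. Every box containing $z$ has $\ell\ge 1-|z|$, so there is no Lebesgue differentiation in the interior (take $h=\mathbf 1_F$ with $F$ a tiny disc about $0$). Hence $W\ge \mathbf 1_F u^{2-\delta}$ is unjustified.

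Since $\Pi^+(uf)$ is essentially constant on each $T_Q$, it would suffice to have $\int_{T_Q\cap F}u^{2-\delta}\lesssim\int_{T_Q}W$. This does follow from $u\in\BHO$. For the spike weight above with $F=E$, however, the ratio of the two sides is $\Lambda^{\gamma+1-2\delta}$ (for $\delta\ge\frac12$). That is unbounded precisely in the range $\gamma>2\delta-1$ that Step~3 forces.

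So without $\BHO$ the two halves of your argument pull $\gamma$ in opposite directions. They meet only at the endpoint $\gamma=2\delta-1$, where neither Coifman--Rochberg nor the room needed for interpolation in Step~3 is available.

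\textbf{Smaller point.} A single $L^r$ bound on $\varphi$ cannot handle both halves of your layer-cake split. On $\{G\le t\}$ you need $\delta r'>1$; on $\{G>t\}$ you need $\delta r'<1$. You need bounds at two exponents straddling $1/(1-\delta)$, or directly $\norm{\varphi}_{L^{1/(1-\delta),1}(u^2)}\lesssim u^2(F)^{1-\delta}$.
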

\begin{proof}
Our proof is inspired by the approach of \cite{CUMP05} in the two-weight, Calder\'on-Zygmund setting. Given $u \in B_1$, we get that $M^u$ is bounded on $L^\infty(\mathbb{D})$ since for $z \in Q$
    \[ u^{-1}(z) \avg{uf}_Q \le \norm{f}_{L^{\infty}(\mathbb{D})} [u]_{B_1}.\]
Hence, by interpolation with the $L^2(\mathbb{D},u^2)$ bound \cite{Gra14}, we obtain that $M^u$ is bounded on the Lorentz space $L^{s,1}(\mathbb{D},u^2)$ for $2<s<\infty$. Furthermore, these operator norms are uniformly bounded by, say, $K_0$ if we restrict our attention to $s$ larger than a fixed exponent, say $s>4$ \cite{CUMP05}.

 We will implement the Rubio de Francia algorithm with the maximal operator $M^u$ and certain $s$ to be chosen so that by duality, with $\frac 1r + \frac 1s =1$,
    \begin{equation}\label{e:duality} \begin{aligned} \norm{ (\Pi^+)^u f}_{L^{1,\infty}(\mathbb{D},u^2)}^{\frac 1r} &= \norm{ \left[ (\Pi^+)^u f\right]^{\frac 1r} }_{L^{r,\infty}(\mathbb{D},u^2)} \\
    &= \sup_{\substack{ h \in L^{s,1}(\mathbb{D},u^2) \\ \norm{h}=1}} \int_{\mathbb{D}} \left|\Pi^+ (uf) \right|^{\frac 1r} h u^{2- \frac 1r} \, dA. \end{aligned} \end{equation}
With the Rubio de Francia operator $R := \sum_{k=0}^\infty (2K_0)^{-k} (M^u)^k$ we have $h \le Rh$, $Rh \cdot u$ belongs to $B_1$ with characteristic $2K_0$, and $R$ is bounded on $L^{s,1}(\mathbb{D},u^2)$ with norm $2$ for any $s>4$. Therefore, factoring $Rh \cdot u^{2-\frac 1r} = (Rh \cdot u)u^{\frac 1s}$, we need to choose $s$ large enough, depending only on $K_0$, so that this weight is indeed in $B_\infty$ (with $C$ and $\delta$ from \eqref{e:Binfty} depending only on $K_0$). In fact, at the end of the proof, we will find $C,s>4$ depending only on $K_0$ such that $[(Rh \cdot u)u^{\frac 1s}]_{B_1} \le C$, which suffices by the discussion following \eqref{e:Binfty}.

Assuming such $C$ and $s$ exist for now, in light of \eqref{e:duality}, we fix $h$ in $L^{s,1}(\mathbb{D},u^2)$ of unit norm and estimate
    \[ \begin{aligned} \int_{\mathbb{D}} \abs{\Pi^+ (uf)}^{\frac 1r} \abs{h} u^{2- \frac 1r} \, dA &\le \int_{\mathbb{D}} \abs{\Pi^+ (uf)}^{\frac 1r} (R\abs{h} \cdot u) u^{\frac 1s} \, dA \\
    &\lesssim \int_{\mathbb{D}} \abs{M (uf)}^{\frac 1r} (R\abs{h} \cdot u)u^{\frac 1s} \, dA \end{aligned} \]
using the Coifman-Fefferman inequality (Proposition \ref{p:CF}) with $p= \frac 1r$ and weight $(R\abs{h} \cdot u) u^{\frac 1s}$, which we chose to belong to $B_\infty$. Now, by H\"older's inequality and the $L^{s,1}(\mathbb{D},u^2)$ boundedness of $R$, we obtain
    \[\begin{aligned} \int_{\mathbb{D}} \abs{M(u f)}^{\frac 1r} (R\abs{h} \cdot u) u^{\frac 1s} \, dA &= \int \abs{M^u f}^{\frac 1r} R\abs{h} \cdot u^2 \\
        &\le \norm{\left(M^u f\right)^{\frac 1r}}_{L^{r,\infty}(\mathbb{D},u^2)} \norm{R \abs h}_{L^{s,1}(\mathbb{D},u^2)} \\
        &\lesssim \norm{M^u f}_{L^{1,\infty}(\mathbb{D},u^2)}^{\frac 1r}.  \end{aligned} \]

It only remains to show that $(Rh \cdot u)u^{\frac 1s}$ is indeed in $B_\infty$ for $s$ large enough, depending only on $K_0$. First, we show that $Rh \cdot u$ belongs to $\BHO$. 
To see this, notice that $Rh \cdot u$ is an infinite sum of terms of the form $Mh_k$. Fix a Carleson box $Q$, $z,w$ in $T_Q$, and a Carleson box $P$ containing $z$. By dilating $P$ by a factor of at most $3$, we find $\tilde P$ which contains all of $T_Q$. In particular, $\tilde P$ contains both $P$ and $w$, thus
    \[ \avg{ h_k }_{P} \le 9 \avg{h_k}_{\tilde P} \le  9 Mh_k(w).\]
But then taking the supremum over all such $P$ shows $Mh_k(z) \lesssim Mh_k(w)$. Since this estimate holds uniformly over $h_k$ and over and $z,w$ in $T_Q$, we have shown that	
	\[ [ (Rh) \cdot u] _{\BHO} \le 9.\] 
Together with the fact that $[Rh \cdot u]_{B_1} \le 2K_0$, we may invoke Lemma \ref{l:RH} to obtain $C,\tau>1$ depending only on $K_0$ such that for all cubes $Q$,
    \[ \avg{Rh \cdot u}_{\tau,Q} \le C \avg{Rh \cdot u}_Q.\]
Now finally we choose $s$ to be the H\"older conjugate of $\tau$ so that
    \[ \avg{(Rh \cdot u) u^{\frac 1s} }_{Q} \le \avg{Rh \cdot u }_{\tau, Q}\avg{ u^{\frac 1s} }_{s, Q} \le C \avg{ Rh \cdot u}_Q \avg{u}_Q^{\frac 1 {s}}.\]
However, since both $Rh \cdot u$ and $u$ are in $B_1$, we conclude that $(Rh \cdot u)u^{\frac 1s}$ also belongs to $B_1$.
\end{proof}

\begin{remark}\label{r:unified}
Notice that in the above proof, our weight condition $u \in B_1$ implied the conjugated maximal function $M^u$ was bounded on $L^\infty(\mathbb D,u^2) = L^\infty(\mathbb D)$. If we interpret statement C. in Theorem \ref{thm:summ} as $L^{\infty,1} = L^\infty$ then there is no distinction in the statement of the Theorem when $p >1$ versus $p=1$. Indeed, by testing on the constant function $\mathbf 1$, we see
    \[ \norm{ M^u \colon L^\infty \to L^\infty} = [u]_{B_1}.\]
So in fact, in this case, C. and D. from Theorem \ref{thm:summ} are equivalent.

However, such an interpretation should be cautioned against. The dual space of $L^{1,\infty}$ is not $L^\infty$ so there is no reason to believe $M^u$ to be bounded on $L^\infty(u^2)$ if it was bounded from $L^1(u^2)$ to $L^{1,\infty}(u^2)$. Moreover, the same logic applies to $\Pi^u$, yet more strongly since $\Pi$ itself is not even bounded on $L^\infty(\mathbb D)$. From this analysis (together with the example from Section \ref{Subsection:Intro-Necessity}), it is evident that $M^v\colon L^\infty(v^2) \to L^\infty(v^2)$ is not necessary for the sought-after weak-type $(1,1)$ bound for $\Pi_\Omega$.
\end{remark}

\subsection{Reflexive case}
From our perspective, there is no point in proving Theorem \ref{thm:Pi-M1} in the case $p>1$ since we already know from \cite{Bek86} that $v^{2-p} \in B_p$ is sufficient for the mixed weighted weak-type bound for $\Piv$. 
Instead, we obtain the following sufficient condition which is a priori weaker than $v^{2-p} \in B_p$, as stated in Theorem \ref{thm:summ}. To prove this, we will use the fact that the necessary condition \eqref{e:Bp-nec-sym}
implies that the dual weight $u^{2-p'}$ belongs to $B_\infty$.

\begin{lemma}\label{l:dual}
    Let $1 < p < \infty$. If $u$ satisfies \eqref{e:Bp-nec-sym} 
    and $q > p'$, then $u^{2-p'} \in B_q$. 
\end{lemma}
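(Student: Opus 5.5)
Set $\sigma:=u^{2-p'}$, $K$ for the supremum in \eqref{e:Bp-nec-sym}, and $r:=\frac{1}{q-1}$. Since $q>p'$, we have $0<r<\frac{1}{p'-1}=p-1$. The goal is $\avg{\sigma}_Q\avg{\sigma^{-1}}_{r,Q}\lesssim 1$ uniformly in $Q$. The plan is to turn the weak-type factor of \eqref{e:Bp-nec-sym} into a distributional estimate for $u^{-1}$, and then integrate that estimate directly. When $p=2$ we have $\sigma\equiv1$ and there is nothing to prove, so assume $p\neq 2$.

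\emph{Step 1: unpack the condition.} The strong factor is explicit: $\norm{u^{-1}}_{L^{p'}(Q,u^2)}=\sigma(Q)^{1/p'}$. For $\lambda>0$ let $E_\lambda:=\{z\in Q\colon u(z)^{-1}>\lambda\}$. Then \eqref{e:Bp-nec-sym} gives
\[ u^2(E_\lambda)\le \lambda^{-p}A,\qquad A:=K^p|Q|^p\sigma(Q)^{-p/p'}. \]
On the layer $L_\lambda:=E_\lambda\setminus E_{2\lambda}$ we have $u^{-2}\le 4\lambda^2$. Hence
\[ |L_\lambda|=\int_{L_\lambda}u^{-2}\,u^2\,dA\lesssim \lambda^{2-p}A, \]
and trivially $|L_\lambda|\le|Q|$.

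\emph{Step 2: integrate over dyadic layers.} Write $\sigma^{-r}=u^{-a}$ with $a=r(2-p')=\frac{r(p-2)}{p-1}$. Then
\[ \int_Q\sigma^{-r}\,dA\approx\sum_{\lambda\in 2^{\mathbb Z}}\lambda^{a}|L_\lambda|\lesssim\sum_{\lambda\in 2^{\mathbb Z}}\lambda^{a}\min\{|Q|,\lambda^{2-p}A\}. \]
The key exponent check is
\[ a+2-p=(p-2)\Big(\tfrac{r}{p-1}-1\Big). \]
Because $r<p-1$, the exponents $a$ and $a+2-p$ have opposite signs. If $p>2$, then $a>0$ (so the $|Q|$ bound converges as $\lambda\to0$) and $a+2-p<0$ (so the $\lambda^{2-p}A$ bound converges as $\lambda\to\infty$). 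If $p<2$, the roles are reversed. In either case the two geometric series meet at the balancing point $\lambda_0$ defined by $\lambda_0^{2-p}A=|Q|$, and we get $\int_Q\sigma^{-r}\lesssim|Q|\lambda_0^{a}$. The implicit constant depends on $p$ and $q$ through $\lvert a\rvert^{-1}$ and $\lvert a+2-p\rvert^{-1}$, which is finite exactly because $q>p'$.

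\emph{Step 3: algebra.} From Step 2, $\avg{\sigma^{-1}}_{r,Q}\lesssim\lambda_0^{a/r}=\lambda_0^{(p-2)/(p-1)}$. From the definition of $\lambda_0$,
\[ \lambda_0^{2-p}=\frac{|Q|}{A}=K^{-p}|Q|^{1-p}\sigma(Q)^{p-1}, \]
using $p/p'=p-1$. Raising to the power $-\frac{1}{p-1}$ gives $\lambda_0^{(p-2)/(p-1)}\sim K^{p'}\,|Q|/\sigma(Q)$. Multiplying by $\avg{\sigma}_Q=\sigma(Q)/|Q|$ yields $\avg{\sigma}_Q\avg{\sigma^{-1}}_{r,Q}\lesssim K^{p'}$, which is the $B_q$ bound.

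The main obstacle is Step 2. One must handle the possibility that $u^{-1}$ is unbounded or degenerate in both directions. One must also check that the sign bookkeeping for $a$ and $a+2-p$ really produces convergent geometric series in both regimes $p<2$ and $p>2$. The strict inequality $q>p'$ is used precisely here: at $q=p'$ the exponent $a+2-p$ vanishes and the sum diverges logarithmically. This is consistent with the loss from $L^{p}$ to $L^{p,\infty}$ in \eqref{e:Bp-nec-sym}.
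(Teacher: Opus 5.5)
Your proof is correct and follows the paper's argument. The paper bounds $\int_Q \sigma^{-1/(q-1)}\,dA = \norm{u^{-1}}_{L^{s}(Q,u^2)}^{s}$, where $s=2+a$ in your notation, by citing the interpolation inequality $\norm{u^{-1}}_{L^s(Q,u^2)} \lesssim \norm{u^{-1}}_{L^{p,\infty}(Q,u^2)}^{\theta}\norm{u^{-1}}_{L^2(Q,u^2)}^{1-\theta}$ together with $\norm{u^{-1}}_{L^2(Q,u^2)}^2=|Q|$, and then doing the same algebra you do to reach the bound $K^{p'}$. Your dyadic layer-cake sum with the balancing point $\lambda_0$ is a hands-on proof of that interpolation step, with the trivial bound $|L_\lambda|\le|Q|$ supplying the $L^2$ information.
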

\begin{proof}
The statement is trivial for $p=2$, so we may assume $p \neq 2$. Let $\sigma = u^{2-p'}$. $v= u^{\frac{1}{2-p'}}.$ Notice that for any $p<s<2$ (if $p<2$) and $2<s<p$ (if $p>2$), by interpolation
    \[ \norm{u^{-1}}_{L^s(Q,u^2)} \lesssim_s \norm{u^{-1}}_{L^{p,\infty}(Q,u^2)}^\theta \norm{u^{-1}}_{L^2(Q,u^2)}^{1-\theta}, \quad \frac{1}{s} = \frac{\theta}{p} + \frac{1-\theta}{2}.\]
Set now
    \[ q= \frac{p'-2}{2-s}+1.\] 
Therefore, $u^{2-s} = \sigma^\frac{-1}{q-1}$. Thus we have
    \[ \avg{ \sigma^{-1}}_{\frac{1}{q-1},Q}\avg{\sigma}_Q 
     \lesssim \left( \frac{\norm{u^{-1}}_{L^{p,\infty}(Q,u^2)}}{|Q|^{\frac 1p}} \right)^{s\theta(q-1)}  \frac{\norm{u^{-1}}_{L^{p'}(Q,u^2)}^{p'}}{|Q|}.\]
However, some algebra shows that $s \theta(q-1) = p'$, so the condition \eqref{e:Bp-nec-sym} yields $\sigma \in B_q$. 
Finally, as $s$ ranges over $(p, 2)$ or $(2,p)$, $q$ ranges over $(p',\infty)$.
\end{proof}

\begin{theorem}\label{thm:Bp-disk}
If $u$ is a weight, $1<p<\infty$, and $M^u$ is bounded from $L^{p',1}(\mathbb{D}, u^2)$ to $L^{p'}(\mathbb{D},u^2)$, then $(\Pi^+)^u$ is bounded from $L^p(\mathbb{D},u^2) \to L^{p,\infty}(\mathbb{D},u^2)$.
\end{theorem}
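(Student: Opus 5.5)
The plan is to dualize the weak-type estimate and reduce it to a strong-type bound for the adjoint operator. That bound is then controlled by the Coifman--Fefferman inequality (Proposition \ref{p:CF}) together with the assumed bound for $M^u$.

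\emph{Duality.} Since $1<p<\infty$, the space $L^{p,\infty}(\mathbb D,u^2)$ is normable, with an equivalent norm given by duality against the unit ball of $L^{p',1}(\mathbb D,u^2)$. For nonnegative $f$ (enough, since $(\Pi^+)^u$ is positive and $|(\Pi^+)^u f|\le(\Pi^+)^u|f|$) this gives
\[ \norm{(\Pi^+)^u f}_{L^{p,\infty}(\mathbb D,u^2)} \sim \sup_{\norm{h}_{L^{p',1}(\mathbb D,u^2)}=1,\ h\ge 0} \int_{\mathbb D} \Pi^+(uf)\, h\, u \, dA .\]
The kernel $|1-z\overline w|^{-2}$ is symmetric and positive, so Tonelli lets us move $\Pi^+$ onto the other factor:
\[ \int_{\mathbb D} \Pi^+(uf)\, h u\, dA = \int_{\mathbb D} f\, (\Pi^+)^u h\, u^2\, dA \le \norm{f}_{L^p(\mathbb D,u^2)} \norm{(\Pi^+)^u h}_{L^{p'}(\mathbb D,u^2)}.\]
It therefore suffices to prove $\norm{(\Pi^+)^u h}_{L^{p'}(\mathbb D,u^2)}\lesssim \norm{h}_{L^{p',1}(\mathbb D,u^2)}$.

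\emph{The dual weight.} Unwinding the conjugation,
\[ \norm{(\Pi^+)^u h}_{L^{p'}(u^2)}^{p'} = \int_{\mathbb D} |\Pi^+(uh)|^{p'} u^{2-p'}\, dA .\]
The hypothesis $M^u\colon L^{p',1}(u^2)\to L^{p'}(u^2)$ and the second case of Proposition \ref{prop:nec} show that $u$ satisfies \eqref{e:Bp-nec-sym}. Lemma \ref{l:dual} then gives $u^{2-p'}\in B_q$ for some (any) $q>p'$. By the remark following \eqref{e:Binfty}, this means $u^{2-p'}\in B_\infty$.

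\emph{Coifman--Fefferman.} Applying Proposition \ref{p:CF} with exponent $p'$ and weight $u^{2-p'}$, we get
\[ \int_{\mathbb D} |\Pi^+(uh)|^{p'} u^{2-p'}\, dA \lesssim \int_{\mathbb D} |M(uh)|^{p'} u^{2-p'}\, dA = \norm{M^u h}_{L^{p'}(\mathbb D,u^2)}^{p'} \lesssim \norm{h}_{L^{p',1}(\mathbb D,u^2)}^{p'},\]
where the last step is the hypothesis. Combining this with the duality step finishes the proof.

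\emph{Expected obstacle.} I expect the only delicate point to be technical justification rather than the chain of inequalities. Proposition \ref{p:CF} requires the right-hand side to be finite, and this is guaranteed by the hypothesis once $h\in L^{p',1}(u^2)$. The duality/Tonelli step needs $f,h\ge0$, which we may assume. One also has to check that restricting to bounded, compactly supported $f$ and passing to the limit by monotone convergence is legitimate, since $\Pi^+$ is positive. The substantive input, the $B_\infty$ property of the dual weight, is already supplied by Lemma \ref{l:dual}.
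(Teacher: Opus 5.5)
Your proposal is correct and follows the paper's proof essentially step for step. You dualize using the symmetry of the positive kernel, which the paper phrases as self-adjointness of $(\Pi^+)^u$ on $L^2(\mathbb D,u^2)$. You then obtain $u^{2-p'}\in B_\infty$ from Proposition \ref{prop:nec} and Lemma \ref{l:dual}, and finish with Proposition \ref{p:CF} and the hypothesis on $M^u$. Two small refinements are worth keeping: you use only the easy direction of the $L^{p',1}$--$L^{p,\infty}$ duality, with constants, where the paper asserts an equality of operator norms, and you note the monotone-convergence approximation step.
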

Now, Theorem \ref{thm:Bp} is an immediate consequence of Proposition \ref{p:CF}. 

\begin{proof}[Proof of Theorem \ref{thm:Bp}]
First, since $\Pi^+$ is self-adjoint on $L^2(\mathbb D)$, $(\Pi^+)^u$ is self-adjoint on $L^2(\mathbb{D},u^2)$, and so we have equality of operator norms 

    \[ \norm{(\Pi^+)^u\colon L^{p}(u^2) \to L^{p,\infty}(u^2) } = \norm{(\Pi^+)^u\colon L^{p',1}(u^2) \to L^{p'}(u^2) } \] 
by duality. Furthermore, for any $f \in L^{p'}(u^2)$,
	\[ \int_{\mathbb{D}} |(\Pi^+)^u f|^{p'} u^2 \, dA = \int_{\mathbb{D}} |\Pi^+(uf)|^{p'} \sigma \, dA, \quad \sigma =u^{2-p'}.\]
Now we invoke the assumed boundedness of $\Mv$ twice. First, by Proposition \ref{prop:nec}, this implies the necessary condition on the weight \eqref{e:Bp-nec-sym}, which forces $\sigma \in B_\infty$ by Lemma \ref{l:dual}. Therefore, we can apply Proposition \ref{p:CF} to control the above display by
	\[ \int_{\mathbb{D}} |M(uf)|^{p'} \sigma \, dA = \norm{M^u(f)}_{L^{p'}(u^2)}^{p'} \lesssim \norm{f}_{L^{p',1}(\mathbb{D},u^2)}^{p'}.\]
\end{proof}

\section{Bergman maximal function estimates}\label{s:max}
We prove some estimates for the conjugated Bergman maximal function $\Mv$ that complement Theorem \ref{thm:Bp-disk} and Theorem \ref{thm:Pi-M1}.
First, we study the case $p=1$ to establish Theorem \ref{thm:B1}. The strategy follows Muckenhoupt and Wheeden's original work from \cite{MW77} in the Calder\'on-Zygmund setting; see also \cite{CUMP05} for the two-weight generalization.
\begin{theorem}\label{thm:max-1} If $u$ belongs to $B_1 \cap \BHO$, then $M^u$ is bounded from $L^1(\mathbb{D}, u^2)$ to $L^{1,\infty}(\mathbb{D},u^2)$.
\end{theorem}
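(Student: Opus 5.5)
We reduce to the dyadic case and argue by Muckenhoupt--Wheeden level decomposition in the values of $u$, using the reverse H\"older inequality to sum over levels.

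\emph{Reduction.} By the pointwise bound $M\le M^{\mathcal D_1}+M^{\mathcal D_2}$, it suffices to fix one dyadic grid $\mathcal D$ and show
\[
u^2(\{z\in\mathbb D\colon M^{\mathcal D}g(z)>\lambda u(z)\})\lesssim \frac1\lambda\int_{\mathbb D} g\,u\,dA
\]
for nonnegative $g$ and $\lambda>0$. This is the claim with $g=u|f|$, since then $\int g u\,dA=\norm{f}_{L^1(\mathbb D,u^2)}$. Let $E$ denote the set on the left. We split it by the size of $u$:
\[
E_k=E\cap\{2^k\le u<2^{k+1}\},\qquad k\in\mathbb Z.
\]
On $E_k$ we have $M^{\mathcal D}g>\lambda 2^k$. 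Hence $E_k\subset\bigcup_j Q^k_j$, where the $Q^k_j$ are the maximal dyadic boxes with $\avg{g}_{Q^k_j}>\lambda 2^k$. These exist by property (ii) of $\mathcal D$, after truncating $g$, and satisfy $|Q^k_j|<(\lambda 2^k)^{-1}\int_{Q^k_j}g\,dA$.

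\emph{Two estimates on each box.} Fix $Q=Q^k_j$ with $E_k\cap Q\neq\emptyset$. Since $u<2^{k+1}$ on $E_k$,
\[
u^2(E_k\cap Q)\le 4^{k+1}|Q|\lesssim \frac{2^k}{\lambda}\int_Q g\,dA .
\]
This bound is good when $2^k\lesssim u$ on the support of $g$. For the opposite range we use Lemma \ref{l:RH}, noting that $B_1\subset B_p$ with controlled constants and that $u\in\BHO$. This gives $\tau>1$ with $\avg{u}_{\tau,Q}\lesssim\avg{u}_Q\lesssim [u]_{B_1}\inf_Q u$. Chebyshev then yields
\[
|Q\cap\{u\ge 2^k\}|\le 2^{-k\tau}\int_Q u^\tau\,dA\lesssim 2^{-k\tau}|Q|\,(\inf_Q u)^\tau .
\]
Consequently
\[
u^2(E_k\cap Q)\lesssim 4^{k}\,2^{-k\tau}(\inf_Q u)^\tau|Q|\lesssim \frac{2^{k(1-\tau)}(\inf_Q u)^\tau}{\lambda}\int_Q g\,dA .
\]
Combining the two bounds,
\[
u^2(E_k\cap Q)\lesssim \lambda^{-1}\int_Q g\,\min\{2^k,\,2^{k(1-\tau)}(\inf_Q u)^\tau\}\,dA,
\]
and since $\inf_Q u\le u(w)$ for every $w\in Q$, the integrand is at most $g(w)\,\phi_k(u(w))$, where $\phi_k(t)=\min\{2^k,2^{k(1-\tau)}t^\tau\}$.

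\emph{Summation.} For fixed $k$ the boxes $Q^k_j$ are disjoint, so summing over $j$ gives
\[
u^2(E_k)\lesssim\lambda^{-1}\int g\,\phi_k(u)\,dA.
\]
Summing over $k$, it remains to check $\sum_k\phi_k(t)\lesssim t$ uniformly in $t>0$. For $2^k\le t$ the terms $2^k$ form a geometric sum bounded by $2t$. For $2^k>t$ the terms $t^\tau 2^{k(1-\tau)}$ form a geometric sum, because $\tau>1$, and it is bounded by $C_\tau t$. Therefore $u^2(E)\lesssim\lambda^{-1}\int g u\,dA$, as required.

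\emph{Main obstacle.} The key difficulty is exactly this summation over levels $k$: the maximal boxes at different levels are nested, so the same part of $g$ is charged many times. The plain $B_1$ bound only gives $u^2(E_k\cap Q)\lesssim 2^k|Q|\inf_Q u$, which leaves a non-summable tail $\sum_{2^k>u(w)}u(w)$. The reverse H\"older gain $\tau>1$, which is where $\BHO$ enters through Lemma \ref{l:RH}, is what produces the geometric decay $2^{k(1-\tau)}$. The remaining care is to make sure that the constants from Lemma \ref{l:RH} depend only on $[u]_{B_1}$ and $[u]_{\BHO}$.
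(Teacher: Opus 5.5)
Your proof is correct, and it takes a genuinely different and shorter route than the paper's.

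\emph{The paper's route.} The paper runs the Muckenhoupt--Wheeden stopping-time argument:
\begin{itemize}
\item it normalizes $u(\mathbb D)=1$ to obtain a lower cutoff $k\ge -q$ for the levels $a^k$;
\item it selects principal cubes, along which $\avg{u}_Q$ at least doubles;
\item in the distributional estimate of Lemma \ref{lem:b1}, it bounds the average of $u$ on each maximal cube by that of its principal ancestor;
\item it controls the total area of the cubes below a principal cube by the localized unweighted weak-type $(1,1)$ bound for $M$;
\item it finally sums $\avg{u}$ along the chain of principal cubes through each point.
\end{itemize}

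\emph{Your route.} You pair the $u$-level $2^k$ directly with the $M^{\mathcal D}g$-level $\lambda 2^k$. You then invoke $B_1$ on every maximal box to replace $\avg{u}_Q$ by $\operatorname{ess\,inf}_Q u\le u(w)$. This moves the weight pointwise inside $\int_Q g\,dA$, so each level costs at most $\lambda^{-1}\int g\,\phi_k(u)\,dA$ over pairwise disjoint boxes. The repeated charging of nested boxes across different levels is then absorbed by the pointwise two-sided geometric bound $\sum_k\phi_k(t)\lesssim t$.

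\emph{Comparison.} Your argument needs only the trivial bound $|Q|<(\lambda 2^k)^{-1}\int_Q g\,dA$ for maximal boxes. It uses no normalization and no principal cubes, and it yields an explicit constant in terms of $[u]_{B_1}$ and the reverse H\"older parameters of Lemma \ref{l:RH}. The paper's scheme is inherited from Muckenhoupt--Wheeden and is the template that extends to two-weight versions of the inequality. In the present one-weight situation, $u\in B_1$ is simultaneously the multiplier, the weight whose level sets are cut, and (through $u^2$) the measure. There your shortcut is available and is the more economical argument.

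\emph{A small remark.} No truncation of $g$ is needed to produce the maximal boxes. Every point of $\mathbb D$ lies in only finitely many dyadic Carleson boxes. In any case $g\in L^1(\mathbb D)$, since $B_1$ applied to the whole disk forces $u$ to be bounded below.
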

When combined with Theorem \ref{thm:Pi-M1} connecting $\Pi^u$ and $M^u$, we obtain Theorem \ref{thm:sawyer} from the introduction. Then, applying the change of variables in Proposition \ref{p:change} we obtain our first main result, Theorem \ref{thm:B1}. In the proof of Theorem \ref{thm:max-1}, we'll use the following Bergman space analogue of \cite{MW77}*{Lemma 1}, which is a direct consequence of the reverse H\"older inequality of Lemma \ref{l:RH}. 
\begin{lemma}\label{lem:b1}
    If $u \in B_1\cap \BHO$, then there exists $\delta>0$ such that 
    \[
        |\{ z \in Q\colon u(z) > \lambda \}| \lesssim |Q|\left( \frac{\avg{u}_Q}{\lambda}\right)^{1 + \delta}
    \]
    for any Carleson cube $Q$ and $\lambda > 0$.
\end{lemma}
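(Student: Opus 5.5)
The plan is to deduce the distributional estimate from the reverse H\"older inequality of Lemma \ref{l:RH} via Chebyshev's inequality. Since $u \in B_1 \subset B_p$ for every $p\ge 1$ with $[u]_{B_p}\le [u]_{B_1}$, and $u \in \BHO$, Lemma \ref{l:RH} (with $p=1$ and $K=\max\{[u]_{\BHO},[u]_{B_1}\}$) provides $C,\tau>1$, depending only on these characteristics, such that
\[ \avg{u}_{\tau,Q} \le C\avg{u}_Q \]
for every Carleson box $Q$. We will set $\delta := \tau - 1 > 0$.

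Next, fix a Carleson box $Q$ and $\lambda>0$, and apply Chebyshev's inequality with exponent $\tau$ on $Q$:
\[ |\{z\in Q\colon u(z)>\lambda\}| \le \lambda^{-\tau}\int_Q u^\tau\,dA = \lambda^{-\tau}|Q|\,\avg{u}_{\tau,Q}^{\tau} \le C^\tau |Q| \left(\frac{\avg{u}_Q}{\lambda}\right)^{\tau}. \]
Since $\tau = 1+\delta$, this is exactly the claimed bound, with implicit constant $C^\tau$ depending only on $[u]_{B_1}$ and $[u]_{\BHO}$.

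There is essentially no obstacle here once Lemma \ref{l:RH} is available. The only points to check are that the $B_1$ hypothesis feeds into Lemma \ref{l:RH}, which is stated for $B_p$ with $1\le p<\infty$ and so includes $p=1$ directly, and that the resulting $\delta$ is uniform over all boxes $Q$, which holds because $\tau$ is. The $B_1$ condition itself, $\avg{u}_Q \lesssim \inf_Q u$, is not needed beyond guaranteeing the reverse H\"older property. It will matter later, in the proof of Theorem \ref{thm:max-1}, when this lemma is combined with the Muckenhoupt--Wheeden decomposition.
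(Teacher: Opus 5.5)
Your proposal is correct and follows the paper's own proof: you apply the reverse H\"older inequality of Lemma \ref{l:RH} to $u \in B_1 \cap \BHO$ to obtain $\tau>1$, use Chebyshev's inequality with exponent $\tau$ on $Q$, and set $\delta = \tau - 1$. Your observation that the implicit constant, and $\delta$ itself, depend only on $[u]_{B_1}$ and $[u]_{\BHO}$ is also accurate.
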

\begin{proof}
This follows immediately from Chebyshev's inequality and Lemma \ref{l:RH}. Indeed, for every $\lambda>0$ and $\tau>1$,
	\[ \left( \frac{| \{ z \in Q \colon u(z) > \lambda\} |\lambda^\tau}{|Q|} \right)^{\frac{1}{\tau}}  \le \avg{u}_{\tau,Q} \lesssim \avg{u}_Q.\]
Taking $1+\delta=\tau$ proves the claim.
\end{proof}

\begin{proof}[Proof of Theorem \ref{thm:max-1}]
    Let $u \in B_1$ and $f \in L^1(\mathbb{D},u^2)$. Recall the desired estimate
    \[
        \sup_{\lambda > 0} u^2(\{z \in \mathbb{D} \colon M^uf(z) > \lambda \}) \lesssim \frac{1}{\lambda}\|f\|_{L^1(\mathbb{D},u^2)}
    \]
    By homogeneity and the change of variables $f \mapsto uf$, it suffices to prove
    \[
        u^2(\{z \in \mathbb{D} \colon Mf(z) > u(z)\}) \lesssim \|f\|_{L^1(\mathbb{D},u)}.
    \]
    Moreover, by homogeneity, we may also assume that $u(\mathbb{D}) = 1$, and, by standard limiting arguments, that $f \in L^\infty(\mathbb{D})$ and $f \ge 0$. Let $a > 4$ and $q \in \mathbb{N}$ satisfying $a^{q-1} \le [u]_{B_1} < a^q$ (recall that $[u]_{B_1} \ge 1$). Therefore, as $u \in B_1$,
    \[
        1 = \int_\mathbb{D}u\,dA \le [u]_{B_1}\operatorname{ess\,inf}_{z \in \mathbb{D}} u(z) < a^q\operatorname{ess\,inf}_{z \in \mathbb{D}}u(z).
    \]
    It follows that $u(z) \ge a^{-q}$ a.e. on $\mathbb{D}$. Recall that for $f \in L^\infty(\mathbb{D})$ and any $k \in \mathbb Z$,
    \[
        \{z \in \mathbb{D} \colon Mf(z) > a^k \} = \bigcup_{j}Q_{k,j}
    \]
    for some collection of maximal dyadic Carleson boxes $\{Q_{k,j}\}_j$ for which $\avg{f}_{Q_{j,k}} \sim a^k$. Therefore, 
    \begin{align}
        \notag u^2(\{z \in \mathbb{D} \colon Mf(z) > u(z) \}) 
        &= \sum_{k = -q}^\infty u^2\left(\{ a^k < u \le a^{k+1}\} \cap \{ Mf > u \}\right)\\
        \notag &\le \sum_{k = -q}^\infty u^2\left(\{ a^k < u \le a^{k+1}\} \cap \{ Mf > a^k \}\right)\\
        &= \sum_{k = -q}^\infty \sum_{j}u^2(\{Q_{k,j} \colon a^k < u \le a^{k+1}\}). \label{e:B1-step-1}
    \end{align}
   These cubes are nested: for $-q \le m \le k$, each $Q_{k,j}$ is contained in a unique $Q_{m,i}$. We take the sum above over those cubes $Q_{k,j}$ for which $\{Q_{k,j} \colon a^k < u \le a^{k+1}\} \neq \emptyset$. For any such cube $Q_{k,j}$, we have
    \begin{equation}\label{e:avg_est}
        \avg{u}_{Q_{j,k}} \le [u]_{B_1} a^{k+1}
    \end{equation}
    We will sum the measures of these sets based on the $v$-averages on their associated cubes. We say a cube $Q_{k,j}$ is principal if $k = -q$, or if for the smallest principal cube containing $Q_{k,j}$, say $Q_{m,i}$, we have
    \[
        \avg{u}_{Q_{k,j}} \ge 2 \avg{u}_{Q_{m,i}}.
    \]
    Let $P := \{(k,j) \colon Q_{k,j} \text{ is principal}\}$ and, for $(k,j) \in P$ and $m \ge k$, let $F(k,j,m)$ denote the collection of indices $(m,i)$ for which $Q_{k,j}$ is the smallest principal cube containing $Q_{m,i}$. By the aforementioned nestedness, each cube is contained in some principal cube. Therefore, applying Lemma~\ref{lem:b1}, if $(m,i) \in F(k,j,m)$, then
    \[ u^2(\{Q_{m,i} \colon a^m < u \le a^{m+1}\})\lesssim a^{2m}|\{Q_{m,i} \colon a^m < u \le a^{m+1}\}| \] 
    \[ \lesssim a^{(1-\delta)m}|Q_{m,i}|\avg{u}_{Q_{m,i}}^{1+\delta} \lesssim a^{(1-\delta)m}|Q_{m,i}|\avg{u}_{Q_{k,j}}^{1+\delta} .
    \]
    By maximality of $Q_{k,j}$, we have that $Mf(z) = M(f\chi_{Q_{k,j}})(z)$ for each $z \in Q_{k,j}$. Consequently, by the weak-type $(1,1)$ estimate for the maximal operator, we have 
    \[
        \sum_{(m,i) \in F(k,j,m)}|Q_{m,i}| \le |\{ Q_{k,j} \colon M(f\chi_{Q_{k,j}}) > a^m \}| \lesssim a^{-m}\int_{Q_{k,j}}f\,dA.
    \]
   Therefore, combining the above two displays,
    with \eqref{e:avg_est}, we have
    \begin{align*}
        \eqref{e:B1-step-1}
        &\lesssim \sum_{(k,j) \in P}\sum_{m=k}^\infty \sum_{(m,i) \in F(k,j,m)} a^{m(1-\delta)}\avg{u}_{Q_{j,k}}^{1 + \delta}|Q_{m,i}| \\
        &\lesssim \sum_{(k,j) \in P} \avg{u}_{Q_{j,k}}^{1 + \delta}\left(\int_{Q_{k,j}}f\,dA\right)\sum_{m=k}^\infty a^{-m\delta}\\
        &\lesssim \sum_{(k,j) \in P} \avg{u}_{Q_{j,k}} \left(\int_{Q_{k,j}}f\,dA\right)\\   
        &= \int_{\mathbb{D}}f(z)\left(\sum_{(k,j) \in P}\avg{u}_{Q_{k,j}}\chi_{Q_{k,j}}(z)\right)\,dA(z).
    \end{align*}
    Since dyadic Carleson boxes have finite overlap, each $z \in \mathbb{D}$ belongs to at most finitely many such boxes, hence to at most finitely many principal cubes. For each $z \in \mathbb{D}$, let $Q_k(z)$ denote the principal cube at level $k$ containing $z$ (when one exists) and let $Q(z) = Q_{k_0}(z)$ denote the smallest such principal cube. Then
    \begin{align*}
        \sum_{(k,j) \in P}{|Q_{k,j}|}\avg{u}_{Q_{k,j}}\chi_{Q_{k,j}}(z) 
        &= \sum_{k = -q}^{k_0}\avg{u}_{Q_k(z)}\\
        &\le \avg{u}_{Q(z)}\sum_{k = -q}^{k_0}2^{-k}\\
        &\lesssim 2^q\avg{u}_{Q(z)} \\
        &\lesssim [u]_{B_1}^2u(z).
    \end{align*}
    Applying this estimate, we conclude
    \[
        u^2(\{z \in \mathbb{D} \colon Mf(z) > u(z) \}) \lesssim \|f\|_{L^1(\mathbb{D},u)},
    \]
    as desired.
\end{proof}

We next turn to the case $p>1$. We are unable at this time to characterize the weights for the dual maximal function estimate $M^u \colon L^{p',1}(\mathbb{D},u^2) \to L^{p'}(\mathbb{D},u^2)$, providing the sought-after companion to Theorem \ref{thm:Bp-disk}. 
However, we can adapt the strategy of \cite{Swe25} to characterize the weights for which $M^u\colon L^p(\mathbb{D},u^2) \to L^{p,\infty}(\mathbb{D},u^2)$ as precisely those that satisfy the necessary condition \eqref{e:Bp-nec-sym}.

\begin{theorem}\label{thm:max-p} If $1<p<\infty$ and $u$ satisfies \eqref{e:Bp-nec-sym}, then $M^u$ is bounded from $L^p(\mathbb{D},u^2)$ to $L^{p,\infty}(\mathbb{D},u^2)$.
\end{theorem}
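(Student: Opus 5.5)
Throughout, $\sigma:=u^{2-p'}$. The plan is a level-set and sparse-cube argument in the spirit of \cite{Swe25}, using the necessary condition \eqref{e:Bp-nec-sym} exactly once per stopping cube. Since $M\le M^{\mathcal D_1}+M^{\mathcal D_2}$, it suffices to treat $M^{\mathcal D}$ for a single dyadic grid $\mathcal D$. Writing $g=uf\ge0$, the goal becomes
\[ u^2(\{z\in\mathbb D\colon M^{\mathcal D}g(z)>\lambda u(z)\})\lesssim \lambda^{-p}\norm{f}_{L^p(\mathbb D,u^2)}^p. \]
We may assume $g$ is bounded with compact support, so the relevant level sets are unions of maximal dyadic boxes.

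\textbf{Stopping boxes.} Fix a large base $a$. For each $k\in\mathbb Z$, let $\{Q_{k,j}\}_j$ be the maximal boxes in $\mathcal D$ with $\avg{g}_{Q}>\lambda a^k$. Take $z$ in the set above and choose $k$ with $a^k<u(z)\le a^{k+1}$. Then $M^{\mathcal D}g(z)>\lambda a^k$, so $z\in Q_{k,j}$ for some $j$, and $u^{-1}(z)\ge a^{-k-1}$. Hence
\[ u^2(\{M^{\mathcal D}g>\lambda u\})\le\sum_{k,j}u^2\big(Q_{k,j}\cap\{u^{-1}\ge a^{-k-1}\}\big)\le\sum_{k,j}a^{(k+1)p}\norm{u^{-1}}^p_{L^{p,\infty}(Q_{k,j},u^2)}. \]
By \eqref{e:Bp-nec-sym}, and since $\norm{u^{-1}}_{L^{p'}(Q,u^2)}^{p'}=\sigma(Q)$, each term is $\lesssim a^{kp}|Q_{k,j}|^p\sigma(Q_{k,j})^{-p/p'}$.

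\textbf{Converting each box to a $\sigma$-average.} Using $\lambda a^k|Q|<\int_Q g=\int_Q (g/\sigma)\,\sigma$, the term for $Q=Q_{k,j}$ is bounded by
\[ \lambda^{-p}\avg{g/\sigma}_{\sigma,Q}^{\,p}\,\sigma(Q)^{p-p/p'}=\lambda^{-p}\avg{g/\sigma}_{\sigma,Q}^{\,p}\,\sigma(Q). \]
This bound uses no disjointness, so the whole difficulty is summing over all $k$ and $j$.

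\textbf{Summation (the main obstacle).} Boxes at different levels overlap and are nested.
\begin{itemize}
\item[(i)] If the same box is maximal for several $k$, we keep the largest such $k$. This is harmless, since the discarded terms, in the form $a^{kp}|Q|^p\sigma(Q)^{-p/p'}$, form a geometric series in $a^{kp}$ bounded by the kept one.
\item[(ii)] By maximality, $\avg{g}_{Q_{k,j}}\lesssim\lambda a^k$, because the dyadic parent has comparable area. So the boxes at level $k+1$ inside $Q_{k,j}$ have total area at most $(C/a)|Q_{k,j}|$. For $a$ large, the remaining family is sparse: $E(Q):=Q\setminus\bigcup\{\text{strictly smaller selected boxes}\}$ satisfies $|E(Q)|\ge\frac12|Q|$, and the sets $E(Q)$ are pairwise disjoint.
\item[(iii)] Lemma \ref{l:dual} gives $\sigma\in B_q$ for some $q$, hence $\sigma\in B_\infty$. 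By \eqref{e:Binfty} this yields $\sigma(Q)\lesssim\sigma(E(Q))$.
\end{itemize}
Combining these,
\[ \sum_Q\avg{g/\sigma}_{\sigma,Q}^{\,p}\sigma(Q)\lesssim\sum_Q\int_{E(Q)}\big(M^{\mathcal D}_\sigma(g/\sigma)\big)^p\sigma\,dA\le\int_{\mathbb D}\big(M^{\mathcal D}_\sigma(g/\sigma)\big)^p\sigma\,dA. \]
By the universal $L^p(\sigma)$ bound for the weighted dyadic maximal operator \cite{LN19}*{Theorem 15.1}, used as in Proposition \ref{p:CF}, this is $\lesssim\int g^p\sigma^{1-p}\,dA$. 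Since $(2-p')(1-p)=2-p$, we get $\int g^p\sigma^{1-p}\,dA=\int |f|^pu^p\,u^{2-p}\,dA=\norm{f}^p_{L^p(\mathbb D,u^2)}$, which closes the estimate.

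I expect step (iii) to be the delicate point. Passing from $\sigma(Q)$ to $\sigma(E(Q))$ needs a $B_\infty$ property of $\sigma$ that is uniform over Carleson boxes; this is exactly what Lemma \ref{l:dual} supplies. The bookkeeping in (i) for boxes repeated across levels also needs care.
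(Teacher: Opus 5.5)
Your proof is correct and is essentially the paper's argument. Both proofs use maximal dyadic boxes at geometric levels, apply \eqref{e:Bp-nec-sym} once per box, convert each term to $\avg{g/\sigma}_{\sigma,Q}^p\,\sigma(Q)$, get $B_\infty$ for $\sigma$ from Lemma \ref{l:dual}, and finish with the universal $L^p(\sigma)$ bound for $M_\sigma$. The paper assigns points to levels by the size of $Mf$ rather than of $u$, which is immaterial.

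The one place you work harder than needed is (ii). In the Bergman setting the top halves $T_Q$ of distinct dyadic boxes are pairwise disjoint, so the paper simply uses $\sigma(Q)\lesssim\sigma(T_Q)$, with no large base $a$ and no stopping-time sparseness. This also avoids the top box, which has no parent and for which your bound $|E(Q)|\ge\frac12|Q|$ can fail. That single term is harmlessly bounded by Jensen's inequality.
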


\begin{proof}
By replacing $f$ by $fu^{-1}$, it suffices to prove
    \[ \norm{ u^{-1} Mf}_{L^{p,\infty}(\mathbb{D},u^2)} \le \norm{f}_{L^p(\mathbb{D},u^{2-p})}. \]
To this end, for each $k\in \mathbb Z$, introduce the maximal dyadic cubes $\{Q_j^k\}_{j \in J_k}$ such that
    \begin{equation}\label{e:maximality} \avg{f}_{Q_j^k} > 2^k.\end{equation}
By maximality and doubling, $\avg{f}_{Q_j^k} \le 4 \cdot 2^k$ so each $Q_{j}^k$ can appear at most $2$ times in the collection $\mathcal Q:=\{ Q_j^k\colon k \in \mathbb Z, j \in J_k\}$. In particular, this means $\{T_Q \colon Q \in \mathcal Q\}$ has finite overlap (of at most $2$). Therefore, with $\dot Q_{j}^k := Q_{j}^k \setminus \{Mf > 2^{k+1}\}$, 
    \[ \lambda^p u^2 \left( \left\{ z \in \mathbb D \colon u^{-1}(z) Mf(z) > \lambda \right\} \right) \le \sum_{j,k}  \lambda^p u^2\left( \left\{ z \in \dot Q_{j^k} \colon u^{-1}(z) > 2^{-(k+1)} \lambda \right\} \right). \]
Furthermore, the condition \eqref{e:Bp-nec-sym} and \eqref{e:maximality} gives for each $Q=Q_j^k$ with $\sigma=u^{2-p'}$  
    \[\lambda^p u^2\left(\left\{ z \in Q \colon u^{-1}(z) > 2^{-(k+1)} \lambda \right\} \right) \lesssim 2^{kp} \sigma(Q)^{1-p} |Q|^p \]
    \[ \lesssim \sigma(Q)^{1-p} \left( \int_{Q}|f| \, dA \right)^p = \sigma(Q) \avg{f\sigma^{-1}}_{\sigma,Q}^p.\]
Now, using Lemma \ref{l:dual} so that $\sigma(Q) \lesssim \sigma(T_Q)$ and the finite overlap of $T_Q$, we obtain
    \[ \lambda u^2 \left( \left\{  z \colon u^{-1}(z) Mf(z) > \lambda \right\} \right)^{\frac 1p} \lesssim \left( \sum_{Q} \sigma(T_Q) \avg{f\sigma^{-1}}_{\sigma,Q}^p \right)^{\frac 1p} \]
    \[ \lesssim \norm{M_{\sigma}(f\sigma^{-1})}_{L^p(\sigma)} \lesssim \norm{ f\sigma^{-1}}_{L^p(\mathbb{D},\sigma)} = \norm{f}_{L^p(\mathbb{D},u^{2-p})},\]
as desired.
\end{proof}

\section{Remarks and Consequences} \label{s:RC}
In this section, we prove Corollary \ref{c:A1}, provide an illustrative example, and record some additional consequences of Theorem \ref{thm:B1}. In what follows, we slightly abuse notation and write $\Pi_{\Omega}$ and $\Pi_{\Omega}^{+}$ for the unique $L^1(\Omega)$ to $L^{1,\infty}(\Omega)$ bounded extensions of $\Pi_{\Omega}$ and $\Pi_{\Omega}^{+}$, respectively. The proof of the following technical Lemma is included in the Appendix.

\begin{lemma}\label{cor:AbsInt}
Let $\Omega \subset \mathbb{C}$ be any simply connected domain for which $\Pi_{\Omega}^+$ extends to a bounded operator $L^1(\Omega) \rightarrow L^{1,\infty}(\Omega)$. Then the following hold:
\begin{enumerate}
\item For every $f \in L^1(\Omega)$ and $z \in \Omega$,
\begin{equation*}
    \int_{\Omega}|K_{\Omega}(z,w)||f(w)|dA(w)<\infty \label{AbsInt}
\end{equation*}
\item If $f \in L^1(\Omega)$, there holds
\begin{equation*} \Pi_{\Omega}f \equiv \int_{\Omega}K_{\Omega}(\cdot,w)f(w)\,dA(w) \label{BergmanKernelL1Formula} \end{equation*}
where the symbol $\equiv$ is interpreted as an equality of measurable functions (defined a.e.) in $L^{1,\infty}(\Omega)$. Moreover, the function on the right-hand side is holomorphic on $\Omega$.
\item If $\psi\colon \mathbb{D} \rightarrow \Omega$ is a conformal map onto $\Omega$, then $\psi'$ is bounded from below on $\mathbb{D}$.
\end{enumerate}
\end{lemma}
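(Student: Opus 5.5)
Throughout, $\Pi^+_\Omega$ denotes the bounded extension $L^1(\Omega)\to L^{1,\infty}(\Omega)$. The plan is to prove part (1) first, using only the boundedness hypothesis, the positivity of the kernel $|K_\Omega|$, and the fact that $z\mapsto |K_\Omega(z,w)|$ is controlled locally uniformly. Parts (2) and (3) then follow from (1).

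\emph{Part (1).} Fix $f\ge 0$ in $L^1(\Omega)$ and truncate it: set $f_n=\min(f,n)\mathbf 1_{\Omega_n}$, where the $\Omega_n$ are compactly contained sets exhausting $\Omega$. Then $f_n\in L^1\cap L^2$ and $f_n\nearrow f$. By monotone convergence, $F_n(z):=\int|K_\Omega(z,w)|f_n(w)\,dA(w)$ increases pointwise to $F(z):=\int|K_\Omega(z,w)|f(w)\,dA(w)$. Since $f_n\to f$ in $L^1$, the weak-type bound makes $F_n=\Pi^+_\Omega f_n$ Cauchy in measure. Hence $F_n$ converges in measure to the extension $\Pi_\Omega^+f$, and so $F=\Pi^+_\Omega f$ a.e. In particular $F<\infty$ a.e. To upgrade this to every $z$, I would use a mean value inequality in the $z$ variable. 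For each $w$, the function $z\mapsto K_\Omega(z,w)$ is holomorphic (it is $\overline{K_\Omega(w,z)}$ with $K_\Omega(w,\cdot)$ antiholomorphic), so $|K_\Omega(\cdot,w)|$ is subharmonic. Therefore $|K_\Omega(z_0,w)|\le \frac{1}{|D|}\int_{D}|K_\Omega(z,w)|\,dA(z)$ for every disc $D\Subset\Omega$ centered at $z_0$, and Tonelli gives $F(z_0)\le \frac1{|D|}\int_D F\,dA$. The step I expect to be the main obstacle is showing that $F$ is locally integrable, because $L^{1,\infty}$ does not embed into $L^1_{\rm loc}$. To get around this, I would average over a small disc with a power below one: subharmonicity of $|K(\cdot,w)|^{1/2}$ is not directly useful after integrating in $w$, so instead I would apply the weak bound to translates. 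Concretely, $\int_D F^{1/2}\lesssim |D|^{1/2}\|F\|_{L^{1,\infty}}^{1/2}<\infty$. I would then use that $F$ is a superposition of the subharmonic functions $|K(\cdot,w)|f(w)$ together with the local Harnack-type inequality $\sup_{D}|h|\lesssim \big(\frac{1}{|2D|}\int_{2D}|h|^{1/2}\big)^2$, valid for holomorphic $h$. Applied to $h=K(\cdot,w)$, this gives $|K(z_0,w)|\lesssim \big(\int_{2D}|K(z,w)|^{1/2}dA\big)^2$, but Minkowski's inequality goes the wrong way for $\ell^{1/2}$ sums. So I would instead obtain finiteness from the kernel bound $\sup_{z\in D}|K_\Omega(z,w)|\le C_D\,|K_\Omega(z_1,w)|$ for generic $z_1\in D$ (Harnack applied to nonvanishing factors is not available either). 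Failing all of this, the fallback is to transfer to the disk via Proposition~\ref{p:change} and use that on $\mathbb D$ one has $|K(z,w)|\le (1-|z|)^{-2}$, so that $F(z)=|\varphi'(z)|\int_{\mathbb D}|K(\varphi(z),\zeta)|\,|g|\,v\,dA(\zeta)$. Finiteness then reduces to $v=|\psi'|\in L^\infty_{\rm loc}$ on compacts together with $\int |g|v^2<\infty$. Since $v$ is bounded on compacts but $g\in L^1(v^2)$ only, I would need $v^{-1}$ bounded, which is exactly part (3). So I would prove (3) before (1).

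\emph{Part (3).} Apply the weak-type bound to $f$ supported on a small set near a point $\psi(w_0)$ and read off the values on a fixed interior region $T$. Lemma~\ref{l:avg-below}, or directly the positivity and near-constancy of $|K_{\mathbb D}(z,w)|\sim 1$ on $T$, gives $(\Pi^+)^v(\mathbf 1_E v^{-2})\gtrsim v^{-1}\int_E v^{-1}\,dA$ on $T$ for $E$ a small disc around $w_0$. Testing against $L^1(v^2)$-normalized functions concentrating at $w_0$, exactly as in the case $p=1$ of Lemma~\ref{l:single-scale}, yields $\sup_{\mathbb D}v^{-1}\cdot v^2(T)\|v^{-1}\|_{L^{1,\infty}(T,v^2)}\lesssim 1$. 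Since $v\sim$ const on $T$ by BHO, this gives $v^{-1}\in L^\infty(\mathbb D)$.

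\emph{Parts (1)--(2) given (3).} In the transferred form above, $F(z)\lesssim |\varphi'(z)|(1-|\varphi(z)|)^{-2}\|v^{-1}\|_\infty\|f\|_{L^1(\Omega)}<\infty$ for every $z$, which proves (1). The same bound, uniform on compacts, lets us differentiate under the integral sign (or apply Morera's theorem with Fubini), so $z\mapsto\int K_\Omega(z,w)f(w)\,dA(w)$ is holomorphic. Finally, for truncations $f_n\to f$ in $L^1$, dominated convergence gives pointwise convergence of the integrals, while the weak-type bound gives convergence in measure to $\Pi_\Omega f$. Since the two limits agree a.e., this proves (2).
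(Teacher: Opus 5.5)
Your argument is correct, but it is organized differently from the paper's.

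\emph{How the paper does (1).} The paper proves (1) first and never needs (3) for it. Once $G=\Pi_\Omega^+\abs{f}$ is known to be finite a.e. (which you also obtain), the change of variables together with $\abs{1-\bar\xi\,\psi^{-1}(z)}\le 2$ shows that finiteness of $G$ at a \emph{single} point forces $\int_{\mathbb D}\abs{f\circ\psi}\,\abs{\psi'}\,dA<\infty$. This quantity does not depend on $z$. The opposite bound $\abs{1-\bar\xi\,\psi^{-1}(z)}\ge 1-\abs{\psi^{-1}(z)}$ and dominated convergence then give continuity, hence finiteness, of $G$ everywhere. So your claim that finiteness ``would need $v^{-1}$ bounded'' is not accurate: your own a.e. finiteness already supplies the needed integrability. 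The subharmonicity and Harnack detours can simply be deleted.

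\emph{How the paper does (3).} The paper proves (3) last. It bounds $\Pi^+_\Omega\abs{f}(z)\ge C_K\int_{\mathbb D}\abs{h}/\abs{\psi'}\,dA$ for $z$ in a compact $K$ and general $h\in L^1(\mathbb D)$. Taking $L^{1,\infty}(K)$ norms then shows that multiplication by $\abs{\psi'}^{-1}$ is bounded on $L^1(\mathbb D)$. This step uses the identification of the extension with the integral for general $L^1$ data, which comes from (1).

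\emph{Your route.} Your proof of (3) rests on the same rank-one lower bound $\abs{K_{\mathbb D}}\ge 1/(4\pi)$. However, you test only on bounded, compactly supported functions $\mathbf 1_E v^{-2}$ and let $E$ shrink. So your (3) is genuinely independent of (1), and there is no circularity. The payoff of proving (3) first is the quantitative bound $G(z)\le C_z\norm{f}_{L^1(\Omega)}$, locally uniform in $z$. This makes holomorphy and the identification in (2) immediate. The paper's ordering instead obtains (1) and (2) by a purely qualitative argument that needs no lower bound on $\psi'$.

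\emph{Two small corrections.} First, the testing actually yields $\sup_{\mathbb D}v^{-1}\cdot\norm{v^{-1}}_{L^{1,\infty}(T,v^2)}\lesssim 1$; the factor $v^2(T)$ in your display is spurious, though harmless. Second, BHO is not needed at that point. You only need that $v$ is continuous and nonvanishing on the compact set $T$, so that $\norm{v^{-1}}_{L^{1,\infty}(T,v^2)}>0$.
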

\begin{corollary} \label{cor:B1DomainsAbsInt}
Any simply connected domain $\Omega$ with $\psi: \mathbb{D} \rightarrow \Omega $ conformal satisfying $|\psi'| \in B_1$ admits the conclusions of Lemma \ref{cor:AbsInt}.
\end{corollary}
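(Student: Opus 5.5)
The corollary reduces to checking the hypothesis of Lemma \ref{cor:AbsInt}: that $\Pi_\Omega^+$ extends boundedly from $L^1(\Omega)$ to $L^{1,\infty}(\Omega)$. The plan is to go through the disk: move the question to a mixed-weighted estimate there, control that estimate by the maximal function, and then pass back to $\Omega$.

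First, set $v=|\psi'|$. By the Koebe distortion theorem (see the remark after Definition \ref{d:BHO}), $v\in\BHO$, and by hypothesis $v\in B_1$. Theorem \ref{thm:max-1} then shows that $M^v$ is bounded from $L^1(\mathbb D,v^2)$ to $L^{1,\infty}(\mathbb D,v^2)$.

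Second, apply Theorem \ref{thm:Pi-M1} with $u=v\in B_1$. Its proof really controls $(\Pi^+)^v$, not only $\Pi^v$. It starts from the duality identity \eqref{e:duality} for $(\Pi^+)^u$, and then uses the Coifman--Fefferman inequality (Proposition \ref{p:CF}), which is stated for $\Pi^+$. So the argument gives
\[
\norm{(\Pi^+)^v f}_{L^{1,\infty}(\mathbb D,v^2)}\lesssim \norm{f}_{L^1(\mathbb D,v^2)}.
\]
The one point to check here is that every step of that proof is written for $\Pi^+$; it is, since the positive kernel is bounded by the sparse sum \eqref{e:sparse}.

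Third, use the second half of Proposition \ref{p:change}, which converts this disk estimate into the weak-type $(1,1)$ bound for $\Pi_\Omega^+$ on $L^1(\Omega)$ functions for which the defining integral makes sense (for instance $L^1\cap L^2$). This class is dense in $L^1(\Omega)$, so the bound extends uniquely by continuity to a bounded operator $L^1(\Omega)\to L^{1,\infty}(\Omega)$. Lemma \ref{cor:AbsInt} now applies and gives all three of its conclusions.

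I do not expect a real obstacle. The only care needed is in the second step, confirming that Theorem \ref{thm:Pi-M1} is genuinely a statement about the positive operator $\Pi^+$, as the paper indicates at the end of the introduction.
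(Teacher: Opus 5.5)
Your proposal is correct and is essentially the paper's own (implicit) argument. The corollary holds because Theorem \ref{thm:B1} is really proved for $\Pi_\Omega^+$: Theorem \ref{thm:max-1} and Theorem \ref{thm:Pi-M1} (whose proof runs through \eqref{e:duality} and Proposition \ref{p:CF} for $\Pi^+$) bound $(\Pi^+)^v$, and Proposition \ref{p:change} transfers this bound to $\Omega$, giving exactly the hypothesis of Lemma \ref{cor:AbsInt}, with your density/limiting extension to all of $L^1(\Omega)$ being the step the paper leaves unstated.
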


We are now in a position to prove Corollary \ref{c:A1}.
\begin{proof}[Proof of Corollary \ref{c:A1}]
By Proposition \ref{p:density} in the Appendix, we have $A^2(\Omega) \cap A^1(\Omega) $ is dense in $A^1(\Omega)$.
Let $f \in A^1(\Omega)$ and take functions $f_n \in A^2(\Omega) \cap A^1(\Omega)$ converging to $f$ in the $L^1(\Omega)$ norm. We have $\Pi_\Omega f_n=f_n$ for all $n$ in light of the fact $f_n \in A^2(\Omega)$. Therefore, by Theorem \ref{thm:B1}, we have 
$$
\|\Pi_{\Omega} f- f_n\|_{L^{1,\infty}(\Omega)} = \|\Pi_{\Omega}(f-f_n) \|_{L^{1,\infty}(\Omega)} \lesssim \|f-f_n\|_{L^1(\Omega)} \rightarrow 0
$$
as $n \rightarrow \infty$. By convergence in $L^{1,\infty}(\Omega)$ and passing to a subsequence, we may assume $f_n$ converges to $\Pi_{\Omega}f$ almost everywhere. Passing to a further subsequence, we have $f_n \rightarrow f$ almost everywhere, which gives $\Pi_{\Omega}f=f$ as elements of $A^1(\Omega)$. The integral expression formula follows from Lemma \ref{cor:AbsInt}. Finally, $\Pi_\Omega$ produces $A^{1,\infty}$ functions from $L^1$ data due to Theorem \ref{thm:B1} combined with Lemma \ref{cor:AbsInt}.\looseness=-1 
\end{proof}

The following example demonstrates that the Bergman projection can exhibit extremely irregular behavior at the endpoint $p=1$ despite being bounded on $L^p$ in the full reflexive range.

\begin{example}
There exists a simply connected, bounded domain $\Omega \subset \mathbb{C}$ satisfying the following properties:
\begin{enumerate}
    \item There is a holomorphic function $f \in A^1(\Omega)$ such that 
    $$ \int_{\Omega} |K_{\Omega}(z,w)||f(w)|\, dA(w)=+\infty$$ for any $z \in \Omega$;
    \item $\Pi_{\Omega}$ does not extend to be weak-type $(1,1)$;
    \item $\Pi_{\Omega}$, $\Pi_{\Omega}^{+}$ are bounded on $L^p(\Omega)$ for $1<p<\infty$.
\end{enumerate}

\begin{proof}
We make use of Example 2.2.1 from \cite{GW24}. In particular, we set $\Omega= \psi(\mathbb{D})$, where
$$ \psi(z)= \frac{( \frac{z+1}{4})}{\log( \frac{z+1}{4})}.$$ It was already shown in \cite{GW24} that $|\psi'|^2 \notin B_1$; the same argument gives $|\psi'| \notin B_1$ as $|\psi'|$ is not bounded from below.
Item (3) was already proved in \cite{GW24} (technically, it was only stated for $\Pi_{\Omega}$, but the same argument works for $\Pi_{\Omega}^{+}$).

To prove item (2), for all $n \in \mathbb{N}$ with $n \geq 2$, let $\varepsilon(n)>0$ be small enough so that $D_n:=D(-\frac{1}{n}, \varepsilon(n)) \subset \Omega$. Set $f_n= \dfrac{\mathbf{1}_{D_n}}{|D_n|}$ and note by construction $\|f_n\|_{L^1(\Omega)}=1$ for all $n$. To disprove the weak-type bound, it is enough to show $\|\Pi_\Omega f_n\|_{L^{1,\infty}(\Omega)}$ blows up as $n \rightarrow \infty$. This goal will be accomplished if we show there is a measurable set $E \subset \Omega$, $|E|>0$ such that for all $z \in E$, $\lim_{n \rightarrow \infty} |\Pi_\Omega f_n(z)|=+\infty.$ Let $z_0 \in \Omega$ be arbitrary, and let $B_{z_0,r} \subset \Omega$ be a small ball of radius $r>0$ centered at $z_0$. For any $z \in B_{z_0,r}$ we calculate, using the mean value property for anti-holomorphic functions and the change of variable formula:
\begin{align*}
\Pi_{\Omega}f_n(z) & = \frac{1}{|D_n|}\int_{D_n}K_{\Omega}(z,w) \, dA(w) = K_{\Omega}(z,-1/n) \\  
&= \frac{1}{\pi \psi'(\psi^{-1}(z))} \left(\frac{1}{(1-\overline{\psi^{-1}(-1/n)} \psi^{-1}(z))^2}\right) \frac{1}{ \psi'(\psi^{-1}(-1/n))}.\end{align*}
Let $p_n= \psi^{-1}(-1/n)$. It is clear $p_n \rightarrow -1$ as $n \rightarrow \infty$, the above formula shows there is a constant $C_{z_0,r}>0$ such that 
 $|\Pi_{\Omega}f_n(z)| \geq C_{z_0,r} \dfrac{1}{|\psi'(p_n)|},$
and the right hand side blows up as $n \rightarrow \infty$ since $|\psi'(z)| \sim \dfrac{1}{\left \lvert \log(\frac{z+1}{4} )\right \rvert} $ on $\mathbb{D}$, proving the claim. 

We now turn to the proof of item (1). A computation shows that it suffices to exhibit a function $f \in A^1(\Omega) $ (equivalently, $f$ is holomorphic on $\Omega$ and $(f\circ \psi)|\psi'|^2 \in L^1(\mathbb{D})$) which satisfies $(f \circ \psi) |\psi'| \notin L^1(\mathbb{D})$. Indeed, if $\Pi_{\Omega}^+|f|$ were absolutely integrable, a change of variables with $\zeta=\psi(\xi)$ gives
\begin{align*}
\int_{\Omega}|K_{\Omega}(z,\zeta)||f(\zeta)|dA(\zeta) & = \frac{1}{\pi |\psi'(\psi^{-1}(z))|} \int_{\mathbb{D}}\dfrac{|f \circ \psi(\xi)|}{|1-\overline{\xi}\,\psi^{-1}(z)|^2} |\psi'(\xi)| \, dA(w) \\ 
& \geq C_z \int_{\mathbb{D}} |f \circ \psi| |\psi'| \, dA,
\end{align*}
where $$ C_z:=\frac{1}{\pi |\psi'(\psi^{-1}(z))|} \inf_{\xi \in \mathbb{D}} \dfrac{1}{|1-\overline{\xi}\,\psi^{-1}(z)|^2}>0. $$ Let $f(z)=\frac{1}{(\psi^{-1}(z)+1)^2}$. Clearly, $f$ is holomorphic on $\Omega$ and satisfies $f \circ \psi(z)= \frac{1}{(z+1)^2}$ on $\mathbb{D}$. A computation shows that $\varphi(z):= \frac{z+1}{4}$ conformally maps $\mathbb{D}$ onto $D(\frac{1}{4},\frac{1}{4}) \subset \frac{1}{2}\mathbb{D}$. A straightforward change of variable calculation gives: 
\begin{align*}
  \int_{\mathbb{D}} |f \circ \psi(z)||\psi'(z)|^2 \, dA(z) 
 & \lesssim \int_{\frac{1}{2} \mathbb{D}}  \dfrac{1}{|\xi|^2|\log{|\xi|}|^2} \\
 & = 2 \pi \int_{0}^{1/2} \dfrac{-1}{r (\log{r})^2} \, dr< \infty,
\end{align*}
so $f \in A^1(\Omega)$.

On the other hand, straightforward calculations show that for sufficiently small $\varepsilon, \delta>0$, the sector
$$A_\varepsilon^\delta:= \{r e^{\mathrm{i} \theta}: 0<r<\varepsilon, \, |\theta|<\delta\}$$
is fully contained in $D(\frac{1}{4}, \frac{1}{4}).$
In addition, we can assume $\delta \leq |\log{\varepsilon}|$, so that $|\log{|\xi|}| \gtrsim |\log{\xi}|$ for all $\xi \in A_\varepsilon^\delta$. We then readily see by integrating in polar coordinates
$$ \int_{A_\varepsilon^\delta} \dfrac{1}{|\xi|^2|\log{|\xi|}|}=+\infty,$$
which implies 
$$(f \circ \psi) |\psi'| \notin L^1(\mathbb{D}), $$ as desired.
\end{proof}
\end{example}

We next show that $A^{1,\infty}(\Omega)$ is a closed subspace of $L^{1,\infty}(\Omega)$. This result does not rely on the Bergman projection at all, only the fact that convergence in the $A^{1,\infty}(\Omega)$ topology is stronger than uniform convergence on compact sets.
\begin{proposition} \label{ClosedQuasiBanach}
 If $\Omega \subsetneq \mathbb C$ is any domain, then $A^{1,\infty}(\Omega)$ is a closed subspace of $L^{1,\infty}(\Omega)$. 
 In particular, $A^{1,\infty}(\Omega)$ is a quasi-Banach space of analytic functions. 
\begin{proof}
Take $\{f_n\} \subset A^{1,\infty}(\Omega)$ converging to $f \in L^{1,\infty}(\Omega)$ in the $L^{1,\infty}(\Omega)$ topology. To conclude that $f$ is analytic, it is enough to show that the sequence $\{f_n\}$ is uniformly Cauchy on compact subsets of $\Omega$, for then the $f_n$ converge uniformly on compact subsets to an analytic function $\widetilde{f}$, and we must have $\widetilde{f}=f$. 
Fix a compact set $K\subset \Omega$ and $z \in K$. Let $B_z$ be a disk centered at $z$ with radius $\frac{1}{2} \operatorname{dist}(z, \partial \Omega) \geq \frac{1}{2} \operatorname{dist}(K, \partial \Omega)>0$, so that $B_z \subset \Omega$. For each $n,m \in \mathbb{N}$, the function $|f_n-f_m|^{1/2}$ is subharmonic on $\Omega$, so the sub-mean value property implies 
$$ |f_n(z)-f_m(z)| \leq \left( \frac{1}{|B_z|}\int_{B_z} |f_n-f_m|^{1/2} \, dA \right)^2 \lesssim \|f_n-f_m\|_{L^{1/2}(\Omega)}.$$
Applying \cite{Gra14}*{Exercise 1.1.11}, the last display is dominated by a constant times $\|f_n-f_m\|_{L^{1,\infty}(\Omega)}$, which goes to $0$ by assumption. The result follows. 
\end{proof}
\end{proposition}

\begin{corollary} \label{c:PiOmegaClosure}
Let $\Omega \subset \mathbb C$ be a simply connected domain bounded by a Jordan curve and $\psi\colon \mathbb D \to \Omega$ be a conformal map. If $|\psi'| \in B_1$, then $$ \overline{\Pi_\Omega L^1(\Omega)}^{L^{1,\infty}} \subset A^{1,\infty}(\Omega).$$
\begin{proof}
Take $f_n \in L^2(\Omega) \cap L^1(\Omega)$ converging to $f$ in the $L^1(\Omega)$ norm, so that $\lim_{n \rightarrow \infty} \|\Pi_\Omega f_n-\Pi_\Omega f\|_{L^{1,\infty}(\Omega)}=0$ by Theorem \ref{thm:B1}. But $\Pi_\Omega f_n \in A^{1,\infty}(\Omega)$ for all $n$, so the result follows from Proposition \ref{ClosedQuasiBanach}. 
\end{proof}
\end{corollary}

\begin{remark}
    By Corollaries \ref{cor:B1DomainsAbsInt} and \ref{c:PiOmegaClosure}, we have that 
    $$
        A^1(\Omega) \subset \Pi_{\Omega} L^1(\Omega) \subset \overline{\Pi_\Omega L^1(\Omega)}^{L^{1,\infty}} \subset A^{1,\infty}(\Omega)
    $$
    whenever $\Omega\subset \mathbb{C}$ is a simply connected domain 
    with conformal map $\psi\colon \mathbb{D}\rightarrow\Omega$ satisfying $|\psi'| \in B_1$. In particular, $\Pi_{\Omega}$ reproduces $A^1(\Omega)$ functions and produces $A^{1,\infty}(\Omega)$ functions from $L^1(\Omega)$ data. 
\end{remark}

Below, we write $L \log L(\Omega)$ to denote the space of measurable functions $f$ on $\Omega$ equipped with finite Luxemburg norm

$$ \|f\|_{L \log L(\Omega)}:= \inf\left\{\lambda>0: \int_{\Omega} \Phi\left(\dfrac{|f|}{\lambda}\right) \, dA \leq 1 \right \}< \infty, \quad \Phi(t)=t \log^{+}t.$$ We derive the following Kolmogorov and Zygmund-type inequalities directly from Theorem \ref{thm:B1}. The proofs are standard and omitted (see \cite{SW22}*{Section 4}).
\begin{corollary}\label{cor:Kolmogorov}
Let $\Omega \subset \mathbb C$ be a simply connected domain and $\psi\colon \mathbb D \to \Omega$ be a conformal map. If $K \subseteq \Omega$ has finite area, $0<p<1$, and $|\psi'|\in B_1$, then 
$$ \|\Pi_{\Omega}f \|_{L^p(K)} \lesssim \|f\|_{L^{1}(\Omega)}$$
for all $f \in L^1(\Omega)$. 
\end{corollary}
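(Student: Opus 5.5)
The plan is to derive the Kolmogorov inequality from the weak-type $(1,1)$ bound of Theorem \ref{thm:B1} via the layer-cake formula, splitting at a height adapted to $|K|$. Fix $f \in L^1(\Omega)$, set $g = \Pi_\Omega f$ (the bounded extension), and write $C_0$ for the weak-type constant from Theorem \ref{thm:B1}, so that $|\{z\in\Omega : |g(z)|>\lambda\}| \le C_0 \lambda^{-1}\norm{f}_{L^1(\Omega)}$ for every $\lambda>0$. Then
\[ \int_K |g|^p \, dA = p\int_0^\infty \lambda^{p-1} \abs{\{z \in K : |g(z)|>\lambda\}}\, d\lambda \le p\int_0^\infty \lambda^{p-1}\min\Bigl\{|K|, \tfrac{C_0\norm{f}_{L^1(\Omega)}}{\lambda}\Bigr\}\, d\lambda . \]

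Next I split the $\lambda$-integral at $\lambda_0 = C_0\norm{f}_{L^1(\Omega)}/|K|$; we may assume $\norm{f}_{L^1(\Omega)}>0$ and $|K|>0$, as otherwise there is nothing to prove. On $(0,\lambda_0)$ I use the bound $|K|$, which contributes $|K|\lambda_0^p$. On $(\lambda_0,\infty)$ I use the weak-type bound, which contributes $\frac{p}{1-p}C_0\norm{f}_{L^1(\Omega)}\lambda_0^{p-1}$; this converges precisely because $p<1$. Both pieces are comparable to $|K|^{1-p}\norm{f}_{L^1(\Omega)}^p$, so
\[ \norm{\Pi_\Omega f}_{L^p(K)} \le \Bigl(\tfrac{1}{1-p}\Bigr)^{1/p} C_0\, |K|^{\frac1p-1}\norm{f}_{L^1(\Omega)}, \]
with the implicit constant depending on $p$, $|K|$, and $[|\psi'|]_{B_1}$ through $C_0$.

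There is essentially no obstacle here. The only point needing care is that $\Pi_\Omega f$ must mean the extension from Theorem \ref{thm:B1}, or equivalently the absolutely convergent integral from Corollary \ref{cor:B1DomainsAbsInt}, so that the distribution-function estimate is legitimate for all $f\in L^1(\Omega)$ and not only for a dense class. If one works instead on a dense class, the bound passes to the limit by Fatou's lemma along an a.e.\ convergent subsequence, which exists because of convergence in $L^{1,\infty}(\Omega)$.
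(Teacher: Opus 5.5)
Your proof is correct and matches the paper's intended argument: the paper omits this proof as standard, and the standard proof is exactly your layer-cake derivation of Kolmogorov's inequality from the weak-type $(1,1)$ bound of Theorem \ref{thm:B1}, split at height $\lambda_0 = C_0\norm{f}_{L^1(\Omega)}/|K|$. Your constant $(1-p)^{-1/p}C_0|K|^{\frac1p-1}$ is right, and so is your remark that $\Pi_\Omega f$ must be read as the bounded $L^1 \to L^{1,\infty}$ extension, which is the paper's convention in that section.
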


\begin{corollary}\label{cor:Zygmund}
Let $\Omega \subset \mathbb C$ be a bounded, simply connected domain and $\psi\colon \mathbb D \to \Omega$ be a conformal map. If $|\psi'|\in B_1$, then 
$$ \|\Pi_{\Omega}f \|_{L^1(\Omega)} \lesssim \|f\|_{L \log L(\Omega)}$$
for all $f \in L^1(\Omega)$. 
\end{corollary}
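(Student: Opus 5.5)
We derive the Zygmund-type bound from the weak-type $(1,1)$ estimate of Theorem \ref{thm:B1} together with a strong-type $L^2$ bound for $\Pi_\Omega$, using the layer-cake formula and a Calder\'on--Zygmund-style splitting of $f$ at each height. Since $\Omega$ is bounded, $|\Omega|<\infty$. This will be used twice: $L\log L(\Omega)\subset L^1(\Omega)$, and the level sets $\{|\Pi_\Omega f|>\lambda\}$ for small $\lambda$ can be bounded by $|\Omega|$. By homogeneity of the Luxemburg norm we normalize $\|f\|_{L\log L(\Omega)}=1$, so that $\int_\Omega |f|\log^+|f|\,dA\le 1$. The goal is then $\|\Pi_\Omega f\|_{L^1(\Omega)}\lesssim 1$.

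Write
\[
\|\Pi_\Omega f\|_{L^1(\Omega)}=\int_0^\infty |\{z\in\Omega\colon |\Pi_\Omega f(z)|>\lambda\}|\,d\lambda
\le 2|\Omega|+\int_2^\infty |\{|\Pi_\Omega f|>\lambda\}|\,d\lambda .
\]
For each $\lambda\ge 2$ split $f=f_1^\lambda+f_2^\lambda$, with $f_1^\lambda=f\mathbf 1_{\{|f|>\lambda/2\}}$ and $f_2^\lambda=f\mathbf 1_{\{|f|\le\lambda/2\}}$. The large part is handled by Theorem \ref{thm:B1}:
\[
|\{|\Pi_\Omega f_1^\lambda|>\lambda/2\}|\lesssim \lambda^{-1}\int_{\{|f|>\lambda/2\}}|f|\,dA .
\]
For the small part use Chebyshev with the $L^2$ bound, since $\Pi_\Omega$ is an orthogonal projection:
\[
|\{|\Pi_\Omega f_2^\lambda|>\lambda/2\}|\le 4\lambda^{-2}\int_{\{|f|\le\lambda/2\}}|f|^2\,dA .
\]
Integrating over $\lambda\in[2,\infty)$ and applying Fubini, the first term gives
\[
\int_\Omega |f|\int_2^{\max(2,2|f|)}\frac{d\lambda}{\lambda}\,dA=\int_\Omega |f|\log^+|f|\,dA\le 1 .
\]
The second term gives
\[
\int_\Omega |f|^2\int_{\max(2,2|f|)}^\infty \frac{4\,d\lambda}{\lambda^2}\,dA\lesssim \int_\Omega |f|\,dA .
\]
Finally, $\int_\Omega|f|\,dA\le e|\Omega|+\int_{\{|f|>e\}}|f|\log|f|\,dA\lesssim 1+|\Omega|$. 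Summing the three contributions yields $\|\Pi_\Omega f\|_{L^1(\Omega)}\lesssim 1$.

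The one point that needs care is that $\Pi_\Omega f$ here means the $L^1\to L^{1,\infty}$ extension from Theorem \ref{thm:B1}, and the splitting uses $\Pi_\Omega f=\Pi_\Omega f_1^\lambda+\Pi_\Omega f_2^\lambda$. This holds almost everywhere by linearity of the extension. On $f_2^\lambda\in L^2\cap L^1$ the extension agrees with the $L^2$ projection, by density and uniqueness of the extension. So the argument is the standard Zygmund $L\log L$ proof, and I expect no real obstacle beyond this consistency check.
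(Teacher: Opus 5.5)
Your proposal is correct and is the standard Zygmund argument that the paper has in mind when it omits the proof as standard: the layer-cake formula, with $f$ split at height $\lambda$, Theorem \ref{thm:B1} applied to the large part, the $L^2$ bound applied to the small part, and $|\Omega|<\infty$ absorbing small $\lambda$ and giving $L\log L\subset L^1$. Your check that the $L^1\to L^{1,\infty}$ extension agrees with the $L^2$ projection on $f_2^\lambda$ and is linear is the right extra detail, and you handle it correctly.
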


\section{Appendix}\label{s:appendix}
In this appendix, we collect a few functional analysis results which were used in the applications of Section \ref{s:RC}.
\begin{proposition}\label{p:density}
    If $1 \le p < \infty$ and $\psi: \mathbb D \to \mathbb C$ is a conformal map satisfying $|\psi'|^{2-p} \in B^p$, then $A^2(\Omega) \cap A^p(\Omega)$ is dense in $A^p(\Omega)$, where $\Omega = \psi(\mathbb D)$. 
\end{proposition}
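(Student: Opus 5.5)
The plan is to transfer the problem to the disk via $C_\psi$, where density becomes a statement about weighted Bergman spaces, and to approximate by dilations. Concretely, $C_\psi$ maps $A^p(\Omega)$ isometrically onto the weighted space $A^p(\mathbb D, v^{2-p})$, $v=|\psi'|$: for $F=(f\circ\psi)\psi'$ we have $\int_{\mathbb D}|F|^p v^{2-p}\,dA=\int_\Omega|f|^p\,dA$. It also maps $A^2(\Omega)$ onto $A^2(\mathbb D)$ unweighted. So the goal is: every $F\in A^p(\mathbb D,v^{2-p})$ can be approximated in that norm by functions in $A^p(\mathbb D,v^{2-p})\cap A^2(\mathbb D)$.

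For the approximants I will use dilations $F_r(z)=F(rz)$, $r<1$. Each $F_r$ is bounded on $\mathbb D$, hence lies in $A^2(\mathbb D)$. Since the weight $v^{2-p}\in B_p$, it is integrable on $\mathbb D$: a $B_p$ weight is locally integrable and the full disk is itself a Carleson box. Therefore $F_r\in A^p(\mathbb D,v^{2-p})$ as well.

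It remains to show $F_r\to F$ in $L^p(v^{2-p})$. Pointwise convergence is clear. For the norms, I would dominate $|F_r|$ by a maximal function of $F$. By subharmonicity of $|F|^{p}$, or more safely $|F|^{p/2}$, together with a Bergman-box averaging argument, one has $\sup_{r}|F(rz)|^{s}\lesssim M(|F|^{s})(z)$ for suitable $s$. The point is that $rz$ has a hyperbolic neighborhood, comparable to a top half $T_Q$, contained in a Carleson box that also contains $z$ with comparable size. (For $r$ bounded away from $1$ this is handled separately, since the domination is trivially bounded on compacts.) Then I use the boundedness of $M$ on $L^{q}(\mathbb D,w)$ for $w\in B_q$ (Bekoll\'e–Bonami). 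With $w=v^{2-p}\in B_p$ and the reverse-H\"older/openness property, we also get $w\in B_{p-\epsilon}$ when $p>1$; this follows from Lemma \ref{l:RH}, since $v$, and hence $v^{2-p}$, is BHO. Taking $s=p/(p-\epsilon)$ gives $\sup_r|F_r|^p\lesssim (M|F|^{s})^{p/s}\in L^1(w)$, and dominated convergence concludes.

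The main obstacle is the endpoint $p=1$, where $M$ is not bounded on $L^1(w)$. Here $w=v\in B_1$, and I would replace the maximal argument with a direct estimate. Write $\int|F_r|\,v\,dA$ via the change of variables $\zeta=rz$: it equals $r^{-2}\int_{r\mathbb D}|F(\zeta)|\,v(\zeta/r)\,dA$. I then need $v(\zeta/r)\lesssim v(\zeta)$ uniformly. This follows from BHO when $\zeta$ and $\zeta/r$ lie in comparable top halves, i.e.\ when $1-|\zeta|\gtrsim 1-r$, which holds for $|\zeta|<r$ up to adjusting constants. With this bound the family $|F_r|v$ is uniformly integrable, and Vitali's theorem gives convergence. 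This BHO comparison of $v$ at $\zeta$ and $\zeta/r$ could equally be used for all $p$ in place of the maximal-function argument.
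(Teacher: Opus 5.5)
\textbf{Verdict: genuine gap at $p=1$; the $p>1$ case is correct by a different route from the paper's.}

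Your transfer to $A^p(\mathbb D,v^{2-p})$ via $C_\psi$ and your use of dilations $F_r$ are sound. For $p>1$ your argument is correct, and it differs from the paper's proof. The paper instead shows that $(\psi')^{2/p-1}$ is a cyclic vector, using Steps 1--2 and Theorem \ref{thm:cyclic}, and deduces that polynomials are dense in $A^p(\mathbb D,|\psi'|^{2-p})$. You don't actually need openness of $B_p$ there: $|F|$ is already subharmonic, so $\sup_r|F_r|\lesssim MF$, and Bekoll\'e--Bonami on $L^p(v^{2-p})$ suffices.

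The gap is at $p=1$, which is exactly the case the paper uses in Corollary \ref{c:A1}. Your claimed bound $v(\zeta/r)\lesssim v(\zeta)$ for $|\zeta|<r$ is false. BHO only compares $v$ at points a bounded hyperbolic distance apart, and $\zeta/r$ is much closer to $\mathbb T$ than $\zeta$ when $|\zeta|$ is near $r$: $1-|\zeta/r|=(r-|\zeta|)/r\to 0$ while $1-|\zeta|\ge 1-r$. The inequality $1-|\zeta|\gtrsim 1-r$ you cite is true, but it is not the comparability you need. For a concrete failure, take $v(z)=|1-z|^{-1}\in B_1\cap\BHO$ (from $\psi(z)=\log(1-z)$) and $\zeta=r(1-\eta)$. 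Then $v(\zeta/r)=\eta^{-1}$, while $v(\zeta)\to(1-r)^{-1}$ as $\eta\to0$. The same objection invalidates your closing remark that this comparison could replace the maximal-function argument for all $p$.

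The fix is to average $F$ rather than compare $v$ pointwise. Set $\delta=1-r|z|$ and $D_z=D(rz,\delta/2)$. Subharmonicity of $|F|$ and Fubini give
\[ \int_{\mathbb D}|F_r|\,v\,dA\le\int_{\mathbb D}|F(w)|\left(\int_{\{z\colon w\in D_z\}}\frac{v(z)}{|D_z|}\,dA(z)\right)dA(w). \]
If $w\in D_z$, then $\tfrac12\delta<1-|w|\le\tfrac32\delta$. Also $|z-w|\le(1-r)|z|+|rz-w|<\tfrac32\delta<3(1-|w|)$. So every such $z$ lies in a Carleson box $Q_w\ni w$ with $|Q_w|\sim(1-|w|)^2$. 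The inner integral is then $\lesssim\avg{v}_{Q_w}\le[v]_{B_1}v(w)$, which uses $B_1$ in its correct form: a box average is controlled by $v$ at any point of the box.

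Now restrict to $|z|>\rho$ with $r>\rho$, so that $1-|w|<3(1-\rho)$. This gives, uniformly in $r$,
\[ \int_{|z|>\rho}|F_r|\,v\,dA\lesssim\int_{|w|>1-3(1-\rho)}|F|\,v\,dA. \]
Combined with uniform convergence $F_r\to F$ on $\{|z|\le\rho\}$ and integrability of $v$, this yields $F_r\to F$ in $L^1(\mathbb D,v)$.
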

We follow the method of Hedenmalm \cite{Hed02}, which relies on a sufficient condition for cyclic vectors in the Bergman space of the disk $A^p(\mathbb D)$ due to Brown, Korenblum, and Shapiro \cites{BK88,Sha67}.

Let us introduce the notation $\mathcal P$ for the vector space of all polynomials in $z$. Then, given $0<p<\infty$ and $f \in A^p(\mathbb D)$ define the invariant subspace
    \[ I(f,p) = \overline{\{ Pf : P \in \mathcal P \}}^{A^p(\mathbb D)}. \]
We say $f$ is cyclic in $A^p$ if $I(f,p) = A^p(\mathbb D)$.
\begin{theorem}[\cite{HKZ00}*{Theorem 3.7}]\label{thm:cyclic}
Let $0<r<p<q<\infty$ and $f \in A^q$. If $f$ is cyclic in $A^r$ then $f$ is cyclic in $A^p$.
\end{theorem}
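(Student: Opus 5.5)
The plan is to work with holomorphic powers of $f$ and a Hölder-type transfer lemma, moving cyclicity from small exponents up to $p$ in finitely many steps, using the extra integrability $f\in A^q$ at each step.

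\emph{Preliminary facts.} Since $f$ is cyclic in $A^r$, it has no zeros in $\mathbb D$: a zero at $a$ would persist for every $Pf$ and, by the local uniform convergence that $A^r$ convergence implies, also for every element of $I(f,r)$, so $1\notin I(f,r)$. Hence $f^\beta$ is a well-defined holomorphic function for every $\beta>0$, and $f^\beta\in A^{q/\beta}$.

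I would first prove the standard power lemma: \emph{if $f$ is zero-free and cyclic in $A^r$, then $f^\beta$ is cyclic in $A^{r/\beta}$ for $0<\beta\le1$.} This is where I expect the main obstacle. My first attempt would go through the characterization of cyclicity via the invariant subspace $I(f^\beta,r/\beta)$. Writing $f=f^\beta f^{1-\beta}$, I would show that the norms $\|Pf\|_{r}$ control approximation of $1$ by polynomial multiples of $f^\beta$ in $A^{r/\beta}$. A route that works in the Hardy setting is to approximate $f^{1-\beta}$ locally uniformly by polynomials $Q_n$ with $Q_nf^\beta$ controlled, and I would attempt to adapt it. If this becomes too delicate, I would simply quote it from \cite{HKZ00}, where it appears as a preparatory lemma.

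\emph{Transfer lemma.} If $g$ is cyclic in $A^a$ and $h\in A^b$, with $1/c=1/a+1/b$, then $h\in I(gh,c)$. The proof is direct. Choose polynomials $P_n$ with $P_ng\to1$ in $A^a$. Then Hölder's inequality gives
\[
\|P_ngh-h\|_{A^c}\le\|P_ng-1\|_{A^a}\|h\|_{A^b}\to0 .
\]
Consequently $I(h,c)\subset I(gh,c)$.

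\emph{Iteration.} Fix $\theta\in(0,1)$ with $\frac1p=\frac{\theta}{r}+\frac{1-\theta}{q}$, which is possible since $r<p<q$. Apply the transfer lemma with $g=f^\theta$, which is cyclic in $A^{r/\theta}$ by the power lemma, and $h=f^{1-\theta}\in A^{q/(1-\theta)}$. This gives $f^{1-\theta}\in I(f,p)$.

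Next, $f^{1-\theta}$ is zero-free and cyclic in $A^{r/(1-\theta)}$, and it lies in $A^{q/(1-\theta)}$. So the same argument applies with $r,q$ replaced by $r/(1-\theta)$ and $q/(1-\theta)$, as long as $r/(1-\theta)<p$. Each pass multiplies the effective lower exponent by a factor of at least $1/(1-\theta_0)>1$, where $\theta_0$ is the first-step value. Hence after finitely many steps we reach some $\beta=(1-\theta_1)\cdots(1-\theta_k)$ with:
\begin{itemize}
\item $f^\beta\in I(f,p)$, using transitivity of the inclusions $I(\cdot,p)$;
\item $f^\beta$ cyclic in $A^{r/\beta}$ with $r/\beta\ge p$.
\end{itemize}
Since $\|\cdot\|_{A^p}\lesssim\|\cdot\|_{A^{r/\beta}}$ on $\mathbb D$, polynomial multiples of $f^\beta$ approximate $1$ in $A^p$. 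Therefore $1\in I(f^\beta,p)\subset I(f,p)$, and multiplying by polynomials gives $I(f,p)=A^p$.
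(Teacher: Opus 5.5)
The paper gives no proof of this statement; it is quoted from \cite{HKZ00}. Judged on its own, your argument has a genuine gap. All of the difficulty sits in the power lemma (if $f$ is zero-free and cyclic in $A^r$, then $f^\beta$ is cyclic in $A^{r/\beta}$), and you do not prove it.

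The rest of your argument is sound. Zero-freeness is correct. So is the H\"older transfer lemma $h\in I(gh,c)$. The fact that $\theta_k$ increases as $\beta_k$ decreases, so that $r/\beta_k$ grows geometrically, also checks out. The final step $1\in I(f^\beta,p)\subset I(f,p)$ is correct.

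However, the power lemma is not a preparatory technicality. Even in the restricted form your iteration actually uses (with $f\in A^q$, $q>r$), it is equivalent to the theorem. To see this, assume the theorem. Write $Pf=(Pf^{1-\beta})f^\beta$, and pick polynomials $Q_n\to f^{1-\beta}$ in $A^b$, where $\frac1r=\frac1b+\frac\beta q$. This gives $b<\frac{q}{1-\beta}$, so $f^{1-\beta}\in A^b$. H\"older then gives
\[
\|PQ_nf^\beta-Pf\|_{A^r}\le\|P\|_\infty\,\|Q_n-f^{1-\beta}\|_{A^b}\,\|f^\beta\|_{A^{q/\beta}}\to0 .
\]
Hence $I(f,r)\subset I(f^\beta,r)$, and $f^\beta$ is cyclic in $A^r$. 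Since $f^\beta\in A^{q/\beta}$ and $r<r/\beta<q/\beta$, the theorem then makes $f^\beta$ cyclic in $A^{r/\beta}$. Your iteration is exactly the converse implication. So the proposal reduces the statement to an equivalent one and stops there.

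The Hardy-space route you mention does not adapt. In $H^p$, cyclic means outer, and powers of outer functions are outer because of the inner--outer factorization, which has no counterpart in $A^p$. The polynomial approximation of $f^{1-\beta}$ you sketch is precisely the computation above. It only yields same-exponent inclusions $I(f,c)\subset I(f^\beta,c)$, that is, cyclicity of $f^\beta$ in $A^r$. The entire content of the statement is the gain from $A^r$ to $A^{r/\beta}$.

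Note also what the lemma claims as you state it, without $f\in A^q$: cyclicity of $f^\beta$ in $A^{r/\beta}$, which is the largest space $f^\beta$ is then known to lie in. That is an endpoint claim, and the theorem itself does not provide it. To close the gap you would need a genuinely Bergman-space mechanism that exploits $f\in A^q$ to raise the exponent. Otherwise you should cite \cite{HKZ00} for the result itself, as the paper does.
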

    
\begin{lemma}
    $I(f,p) = A^p(\mathbb D)$ if and only if $1 \in I(f,p)$.
\end{lemma}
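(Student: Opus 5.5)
One direction is immediate. If $I(f,p) = A^p(\mathbb D)$, then $1 \in I(f,p)$, since the constant function $1$ lies in $A^p(\mathbb D)$.

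For the converse, assume $1 \in I(f,p)$. The plan is to show that $I(f,p)$ contains every polynomial and then use density of polynomials in $A^p(\mathbb D)$.

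\emph{Step 1: $I(f,p)$ is invariant under multiplication by $z$.} Multiplication by $z$ is bounded on $A^p(\mathbb D)$ with norm at most $1$, because $|z|\le 1$ on $\mathbb D$; this holds for all $0<p<\infty$, with $A^p$ a quasi-Banach space when $p<1$. It maps $\{Pf : P \in \mathcal P\}$ into itself. By continuity it therefore maps the closure $I(f,p)$ into itself.

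\emph{Step 2: $I(f,p)$ contains all polynomials.} Since $1 \in I(f,p)$, iterating Step 1 gives $z^n \in I(f,p)$ for every $n \ge 0$. Because $I(f,p)$ is a linear subspace, $\mathcal P \subset I(f,p)$.

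\emph{Step 3: density of polynomials.} Polynomials are dense in $A^p(\mathbb D)$ for $0<p<\infty$. The standard argument takes $g \in A^p$ and the dilations $g_r(z)=g(rz)$, which converge to $g$ in $A^p$ as $r \to 1^-$ because integral means of $|g|^p$ over circles increase in the radius. Each $g_r$ is holomorphic on a larger disk, so its Taylor polynomials converge to it uniformly on $\overline{\mathbb D}$. Since $I(f,p)$ is closed, $A^p(\mathbb D) = \overline{\mathcal P} \subset I(f,p)$.

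The only step that is not purely formal is Step 3. It is classical, so I would cite it rather than reprove it.
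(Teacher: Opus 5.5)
Your proof is correct and is essentially the paper's argument: both use only the density of polynomials in $A^p(\mathbb D)$ and the fact that multiplying by a polynomial preserves $I(f,p)$. The paper does this in a single $\epsilon$-step, approximating $g$ by a polynomial $P$ and then $P$ by $PQf$ via $\|P(Qf-1)\|_{A^p}\le \|P\|_\infty\|Qf-1\|_{A^p}$. You instead phrase it as invariance of the closed subspace $I(f,p)$ under multiplication by $z$, which also works verbatim for $0<p<1$.
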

\begin{proof}
   Assume $1 \in I(f,p).$ Let $g \in A^p(\mathbb D)$ and $\ep>0$. We can find a polynomial $P$ such that
        \[ \norm{P-g}_{A^p(\mathbb D)} < \frac \ep 2.\]
    On the other hand, we can find a polynomial $Q$ such that 
        \[ \norm{Qf-1}_{A^p(\mathbb D)} < \frac{\ep}{2 \norm{P}_\infty}.\]
    Then, $\norm{PQf-g}_{A^p(\mathbb D)} < \ep$ by the triangle inequality.
\end{proof}

\begin{proof}[Proof of Proposition \ref{p:density}]
    Set $f=(\psi')^{\frac{2}{p}-1}$. 
    
    \textit{Step 1: There exists $r>0$ such that $f$ is cyclic in $A^r$.}
The $B_p$ condition implies that $f,1/f \in A^s(\mathbb D)$ for some $s>0$. Therefore, we can find a sequence of polynomials $P_n$ converging to $1/f$ in $A^s(\mathbb D)$. But then,
    \[ \int_{\mathbb D} |P_n f -1 |^{\frac s2}\, dA = \int_{\mathbb D} |P_n - 1/f|^{\frac s2} |f|^{\frac s2} \, dA \le \left( \norm{P_n-f^{-1}}_{A^s} \norm{f}_{A^s}\right)^{\frac 12} \to 0\]
as $n \to \infty$.

    \textit{Step 2: $f \in A^q$ for some $q>p$.} Since $f^p = |\psi'|^{2-p} \in B_p$, by the reverse H\"older property, Lemma \ref{l:RH}, there exists $\tau>1$ such that $f^{\tau p}$ is integrable, i.e. $f \in A^q$ for $q=\tau p > p$.
    
    Applying Theorem \ref{thm:cyclic}, Steps 1 and 2 now show that $f$ is cyclic in $A^p$. 
    
    \textit{Step 3: Polynomials are dense in $A^p(u)$ with $u=|\psi'|^{2-p} = |f|^p$.} Indeed, given $g \in A^p(u)$, we see that $gf \in A^p$. Since $f$ is cyclic in $A^p$, we can find $P_nf \to gf$ in $A^p$, which translates to $P_n \to g$ in $A^p(u)$.
    
    \textit{Step 4: $A^2(\Omega) \cap A^p(\Omega)$ is dense in $A^p(\Omega)$.} Recall from \eqref{e:transformation} the composition operator $C_\psi h = (h \circ \psi ) \psi'$, $h:\Omega \to \mathbb C$ which satisfies
        \[ \norm{h}_{A^p(\Omega)} = \norm{C_\psi h}_{A^p(\mathbb D,u)}\]
    for every $0<p<\infty$. So, given $h \in A^p(\Omega)$ set $g=C_\psi h$. By step 3, we can find a sequence of polynomials $P_n \to g$ in $A^p(u)$. Then, with $Q_n = C_\psi^{-1}P_n \in A^p(\Omega)$ we have $Q_n \to h$ in $A_p(\Omega)$. But 
        \[ \int_{\Omega} |Q_n|^2 = \int_{\mathbb D} |P_n|^2,\]
    so $Q_n \in A^2(\Omega) \cap A^p(\Omega)$. 
\end{proof}

 \begin{proof}[Proof of Lemma \ref{cor:AbsInt}]

A routine approximation argument with the Monotone Convergence Theorem gives $G(z):= \int_{\Omega}|K_{\Omega}(z,w)||f(w)|dA(w)$ is an extended real-valued measurable function on $\Omega$, so it remains to show that $G(z)$ is finite everywhere. We first prove $G \in L^{1,\infty}(\Omega)$, hence finite almost everywhere. This follows from the weak-type estimate because a limiting argument gives for every $\lambda>0$
$$ \lambda |\{z \in \Omega\colon G(z)>\lambda\}| \leq \|\Pi_{\Omega}^{+}\|_{L^1 \rightarrow L^{1,\infty}} \|f\|_{L^1(\Omega)}<\infty.  $$

To upgrade almost everywhere finiteness to everywhere finite, observe that for such a point $z \in \Omega$, by change of variable with $\zeta=\psi(\xi)$, 
\begin{align*}
\int_{\Omega}|K_{\Omega}(z,\zeta)||f(\zeta)|dA(\zeta) & = \frac{1}{\pi |\psi'(\psi^{-1}(z))|} \int_{\mathbb{D}}\dfrac{|f \circ \psi(\xi)|}{|1-\overline{\xi}\,\psi^{-1}(z)|^2} |\psi'(\xi)| \, dA(w) \\ 
& \geq C_z \int_{\mathbb{D}} |f \circ \psi| |\psi'| \, dA,
\end{align*}
where $$ C_z:=\frac{1}{\pi |\psi'(\psi^{-1}(z))|} \inf_{\xi \in \mathbb{D}} \dfrac{1}{|1-\overline{\xi}\,\psi^{-1}(z)|^2}>0. $$
This proves $(f \circ \psi) \psi' \in L^1(\mathbb{D})$, and then the Dominated Convergence Theorem, continuity of $\psi'$ and $K_{\mathbb{D}}$, and change of variable give that the function $G(z)$ is continuous in the variable $z$, hence finite everywhere.

To prove \eqref{BergmanKernelL1Formula}, reduce matters to proving the identity for positive $f$, take a monotone approximating sequence, and apply the Dominated Convergence Theorem. To see the integral expression is holomorphic in the variable $z$, pass $\overline \partial$ inside the integral using the Dominated Convergence Theorem and the fact that $K_\Omega(z,w)$ is holomorphic in the variable $z$.

Finally, to prove item (3), it is enough to show that the multiplication operator $M_{|\psi'|^{-1}}$ is bounded on $L^1(\mathbb{D}).$ Let $h \in L^1(\mathbb{D})$, fix $z \in \Omega$ and let $K \subset \Omega$ be a fixed compact set containing $z$. We see $f:=\frac{h \circ \psi^{-1}}{|\psi'\circ \psi^{-1}|^2} \in L^1(\Omega)$ with $\|f\|_{L^1(\Omega)}=\|h\|_{L^1(\mathbb{D})}$, so change of variable implies 

$$\int_{\mathbb{D}}\left \lvert \frac{h}{\psi'}  \right \rvert\, dA \leq C_K \,\Pi_{\Omega}^{+}(|f|)(z), \quad z \in K.$$
Taking weak-type norms of both sides, we get
\begin{align*}
 |K| \int_{\mathbb{D}} | M_{|\psi'|^{-1}}h | \, dA & \leq C_K \|\Pi_{\Omega}^{+}(|f|)\|_{L^{1,\infty}(K)} \\
 & \leq C_K \, \|\Pi_{\Omega}\|_{L^1 \rightarrow L^{1,\infty}} \|f\|_{L^1(\Omega)}= C_K \, \|\Pi_{\Omega}\|_{L^1 \rightarrow L^{1,\infty}} \|h\|_{L^1(\mathbb{D})},
 \end{align*}
 which proves the claim. 
\end{proof}

\bibliographystyle{alpha}
\bibliography{main}

\end{document}